\documentclass[a4paper,10pt,english,reqno]{amsart}
\usepackage{graphicx, varioref, amscd ,color, bm, stmaryrd, mathrsfs, dsfont, esint}
\usepackage[colorlinks=false]{hyperref}

\usepackage{tikz} 
\usepackage{pgflibraryarrows} 
\usepackage{placeins} 

\setcounter{tocdepth}{1} 

\newcommand{\R}{\mathbb{R}}
\newcommand{\N}{\mathbb{N}}

\newcommand{\D}{\mathbb{D}}
\newcommand{\Term}{\mathscr{T}}
\newcommand{\Zerm}{\mathscr{Z}}
\newcommand{\Borel}[1]{\mathscr{B}\left(#1\right)}
\newcommand{\Pred}{\mathscr{P}}
\newcommand{\Lebesgue}{\mathcal{L}}
\newcommand{\abs}[1]{\left|#1\right|}

\newcommand{\seq}[1]{\left\{#1\right\}}
\newcommand{\norm}[1]{\left\Vert #1\right\Vert}
\newcommand{\sgn}[1]{\mathrm{sign}\left(#1\right)}

\newcommand{\Dt}{{\Delta t}}
\newcommand{\st}{\,\mid|\,}

\newcommand{\test}{\varphi}

\newcommand{\loc}{{\mathrm{loc}}}

\newcommand{\downto}{\downarrow}
\newcommand{\Sm}{\mathcal{S}}

\newcommand{\ue}{{u^\varepsilon}}

\newcommand{\dX}{\,dX}

\newcommand{\Div}{\operatorname{div}}
\newcommand{\udt}{u_{\Dt}}
\newcommand{\vdt}{v_{\Dt}}
\newcommand{\edt}{\eta_\Dt}
\newcommand{\etat}{{\tilde{\eta}}}
\newcommand{\vdtj}{v_{\Dt_j}}
\newcommand{\udtj}{u_{\Dt_j}}

\newcommand{\Young}[1]{\mathcal{Y}\left(#1\right)}
\newcommand{\Rad}[1]{\mathcal{M}(#1)}

\newcommand{\ocal}{\mathcal{O}}
\newcommand{\fcal}{\mathscr{F}}
\newcommand{\pcal}{\mathscr{P}}

\newcommand{\E}[1]{E \left[#1\right]}
\newcommand{\Eb}[1]{E \Biggl[#1\Biggr]}
\newcommand{\kappas}{{\kappa_\sigma}}
\newcommand{\kappao}{{\kappa_0}}

\newcommand{\scl}{\mathcal{S}_{\text{CL}}}
\newcommand{\ssde}{\mathcal{S}_{\text{SDE}}}
\newcommand{\Set}[1]{\left\{#1\right\}}

\DeclareMathOperator*{\esssup}{ess\,sup}

\theoremstyle{plain} 

\theoremstyle{plain} 
\newtheorem{theorem}{Theorem}[section]
\newtheorem{lemma}{Lemma}[section]
\newtheorem{corollary}[lemma]{Corollary} 
\newtheorem{proposition}[lemma]{Proposition}
\newtheorem{limit}{Limit}
\newtheorem{estimate}{Estimate}[section]

\theoremstyle{definition}

\newtheorem{definition}{Definition}[section]
\newtheorem{remark}[lemma]{Remark}

\numberwithin{equation}{section}
\allowdisplaybreaks[1]


\title[Operator splitting for stochastic balance laws]
{Analysis of a splitting method for \\ stochastic balance laws}

\author[K. H. Karlsen]{K. H. Karlsen}
\address[Kenneth H. Karlsen]
{\newline Department of Mathematics
\newline University of Oslo
\newline P.O. Box 1053,  Blindern
\newline N--0316 Oslo, Norway} 
\email[]{kennethk@math.uio.no}

\author[E. B. Storr\o{}sten]{E. B. Storr\o{}sten}
\address[Erlend Briseid Storr\o{}sten]
{\newline Department of Mathematics
\newline University of Oslo
\newline P.O. Box 1053, Blindern
\newline N--0316 Oslo, Norway} 
\email[]{erlenbs@math.uio.no}

\thanks{We are grateful to Nils Henrik Risebro for 
valuable comments and suggestions.}

\date{\today}

\subjclass[2010]{Primary: 60H15, 35L60; Secondary: 35L65, 60G15}

\keywords{Stochastic conservation law, entropy condition, Malliavin calculus, 
numerical method, operator splitting, convergence, error estimate}

\begin{document}

\begin{abstract}
We analyze a semi-discrete splitting method for 
conservation laws driven by a semilinear noise term. 
Making use of fractional $BV$ estimates, we 
show that the splitting method generates 
approximate solutions converging 
to the exact solution, as the time step $\Delta t \to 0$. 
Under the assumption of a homogenous noise function, and thus 
the availability of $BV$ estimates, we provide an $L^1$ error estimate. 
Bringing into play a generalization of 
Kru{\v{z}}kov's entropy condition, permitting 
the ``Kru{\v{z}}kov constants" to be Malliavin 
differentiable random variables, we establish an 
$L^1$ convergence rate of order $\frac13$ in $\Delta t$.
\end{abstract}

\maketitle
\allowdisplaybreaks

\tableofcontents

\section{Introdcution}
Recently there have been many works 
studying the effect of stochastic forcing on scalar conservation laws 
\cite{Bauzet:2012kx,Biswas:2014gd,Chen:2011fk,DebusscheVovelle2010,Debussche:2015aa,Debussche:2016aa,Hofmanova:2013aa,Kim2003,KarlsenStorroesten2014,FengNualart2008,Vallet:2009uq,Vallet:2000ys}, 
with emphasis on existence, uniqueness, and stability questions. 
Deterministic conservation laws exhibit shocks 
(discontinuous solutions), and a weak formulation coupled 
with an appropriate entropy condition is required to 
ensure the well-posedness \cite{Kruzkov:1970kx}. 
The question of uniqueness gets somewhat more difficult 
by adding a stochastic source term, due to the 
interaction between noise and nonlinearity. 
A pathwise theory for conservation laws with stochastic fluxes have been 
developed in \cite{Gess:2014aa,Gess:2015aa,Lions:2013aa,Lions:2014aa}.

In this paper we are interested in the convergence of approximate solutions to 
conservation laws driven by a multiplicative Wiener noise term, i.e., 
stochastic balance laws of the form
\begin{equation}\label{eq:SBL}
	du+ \Div f(u)\, dt
	=\sigma(x,u)\, d B, \qquad (t,x)\in \Pi_T,
\end{equation}
with initial data:
\begin{equation}\label{eq:SBL-data}
	u(0,x)=u_0(x),\qquad x\in \R^d.
\end{equation}
We denote by $\nabla$ and $\Div=\nabla\cdot$ the spatial 
gradient and divergence, respectively. Moreover, 
$\Pi_T = \R^d \times (0,T)$ for 
some fixed final time $T>0$, and $u(x,t)$ is the scalar 
unknown function that is sought. 
The random force in \eqref{eq:SBL} is driven by a 
Wiener process $B=B(t)=B(t,\omega)$, $\omega\in \Omega$, over 
a stochastic basis $(\Omega,\fcal,\Set{\fcal_t}_{t\ge 0},P)$, 
where $P$ is a probability measure, $\fcal$ is a $\sigma$-algebra, 
and $\Set{\fcal_t}_{t\ge 0}$ is a right-continuous 
filtration on $(\Omega,\fcal)$ such that $\fcal_0$ 
contains all the $P$--negligible subsets.

The convection flux $f:\R\to\R^d$ satisfies
\begin{equation}\label{eq:fluxLip-ass}
	\text{$f$ is (globally) Lipschitz continuous on $\R$.}
	\tag{$\mathcal{A}_{f}$}
\end{equation}
Furthermore, we will sometimes make use of the assumption 
\begin{equation}\label{eq:fluxDoubleDerBounded-ass}
	\text{$f''$ is uniformly bounded on $\R$.}
	\tag{$\mathcal{A}_{f,1}$}
\end{equation}
The noise coefficient $\sigma:\R^d\times \R\to \R$ is assumed to satisfy 
\begin{equation}\label{eq:sigma-ass}
	\norm{\sigma}_{\mathrm{Lip}} = 
	\sup_{x \in \R^d}
	\sup_{u \neq v}
	\left\{\frac{\abs{\sigma(x,u)-\sigma(x,v)}}{\abs{u-v}}\right\}
	< \infty,
	\quad
	\abs{\sigma(\cdot,0)} \in L^\infty(\R^d).
	\tag{$\mathcal{A}_{\sigma}$}
\end{equation}
These assumptions imply
\begin{equation*}
	\begin{split}
		&\abs{\sigma(x,u)-\sigma(x,v)} \le \norm{\sigma}_{\mathrm{Lip}}\abs{u-v},
		\\ & \abs{\sigma(x,u)}\le \max\left\{\norm{\sigma}_\mathrm{Lip},
		\norm{\sigma(\cdot,0)}_{L^\infty(\R^d)}\right\}\left(1+\abs{u}\right).
	\end{split}
\end{equation*}
Furthermore, we often assume the existence of constants $M_\sigma$ and $\kappas$ such that 
\begin{equation}\label{eq:sigma-ass-xdep}
 \abs{\sigma(x,u)-\sigma(y,u)}\le M_\sigma\abs{x-y}^{\kappas + 1/2}(1 + \abs{u}),
  \qquad \kappas \in (0,1/2].
  \tag{$\mathcal{A}_{\sigma,1}$}
\end{equation}

A prevailing difficulty affecting convergence/error analysis is 
related to the time discretization and the interplay 
between noise and nonlinearity. Up to now there are only a few studies 
investigating this problem. Holden and Risebro \cite{HR1997} study 
a one-dimensional equation with bounded initial data and 
a compactly supported, homogeneous noise function $\sigma=\sigma(u)$, ensuring 
$L^\infty$-bounds on the solution. An operator splitting method is used to construct 
approximate solutions, and it is shown that a subsequence of 
these approximations converges to a (possible non-unique) weak solution.
Recently this work was generalized to stochastic entropy solutions 
and extended to the multi-dimensional case 
by Bauzet \cite{Bauzet:2015aa}. Kr\"oker and Rohde \cite{Kroker:2012fk} analyze 
semi-discrete (time continuous) finite volume methods. They use the 
compensated compactness method to prove convergence to a 
stochastic entropy solution for one-dimensional
equations, with non-homogeneous noise function $\sigma=\sigma(x,u)$.
Bauzet, Charrier, and Gallo{\"u}et \cite{Bauzet:2014aa} 
analyze fully discrete finite volume methods for multi-dimensional 
equations, with homogeneous noise function $\sigma=\sigma(u)$. 
Their proof relies on weak $BV$ (energy) estimates and a uniqueness result 
for measure-valued stochastic entropy solutions.

In this paper, as in  \cite{HR1997,Bauzet:2015aa}, we will investigate 
the semi-discrete splitting method for calculating approximations to 
stochastic entropy solutions of \eqref{eq:SBL}.  Roughly speaking, this method 
is based on ``splitting off" the effect of the stochastic source $\sigma(x,u)\, d B$. 
This Godunov-type operator splitting can be used to extend sophisticated numerical methods 
for deterministic conservation laws to stochastic balance laws. 
Generally speaking, the tag ``operator splitting" refers to the well-known idea 
of constructing numerical methods for complicated partial differential equations 
by reducing them to a progression of simpler equations, each of 
which can be solved by some tailor-made numerical method. 
The operator splitting approach is described in a large number of articles and books. 
We do not survey the literature here, referring the reader instead 
to the bibliography in \cite{Holden:2010fk}. The main focus of the book \cite{Holden:2010fk} is 
on convergence results, within classes of discontinuous functions, for general 
splitting algorithms for deterministic nonlinear partial differential equations.

Compared to the earlier results of Holden-Risebro and Bauzet 
\cite{Bauzet:2015aa,HR1997}, the main contributions of the present paper are 
twofold. First, we establish convergence of the splitting approximations 
to a stochastic entropy solution in the case of non-homogeneous 
noise functions $\sigma = \sigma(x,u)$.  Whenever $\sigma$ has a 
dependency on the spatial position $x$, $BV$ estimates are no longer 
available and the approach resorted to in \cite{HR1997,Bauzet:2015aa} 
does not apply. Following an idea laid out in \cite{Chen:2011fk}, and independently in 
\cite{DebusscheVovelle2010}, we derive a fractional $BV_x$ estimate, which, via 
an interpolation argument \textit{\`{a} la} Kru{\v{z}}kov, is turned into a 
temporal equicontinuity estimate. These a priori estimates, along with Young measures and 
an earlier uniqueness result, are used to show that splitting approximations converge 
to a stochastic entropy solution. 

Let us make a few comments about the convergence proof. 
In the deterministic case, the spatial and temporal estimates would imply strong 
($L^1$) compactness of the splitting approximations.  
In the stochastic setting, we have the randomness 
variable $\omega$ for which there is no compactness; as a 
matter of fact, possible ``oscillations" in $\omega$ may prevent 
strong compactness. In the literature, the standard way of dealing 
with this issue is to look for tightness (weak compactness) 
of the probability laws of the approximations. Then an application of 
the Skorokhod representation theorem provides a 
new probability space and new random 
variables, with the same laws as the original variables, 
that do converge strongly (almost surely) in $\omega$ to some limit. 
Equipped with almost sure convergence, it is not difficult to show that 
the limit variable is a so-called martingale solution, i.e., the limit
is probabilistic weak in the sense that the stochastic basis is now 
viewed as part of the solution. One can pass (\textit{\`{a} la} Yamada \& Watanabe)
from martingale to pathwise solutions provided 
there is a strong uniqueness result. In the present paper we will 
\textit{not} follow this ``traditional" approach. 
Instead we will utilize Young measures, parametrized 
over $(t,x,\omega)$, to represent weak limits of nonlinear functions, thereby 
obtaining weak convergence of the splitting approximations towards a so-called 
Young measure-valued stochastic entropy solution. We use the spatial 
and temporal translation estimates to conclude that the limit 
is a solution in this sense. Weak convergence is then upgraded to 
strong convergence in $(t,x,\omega)$ \textit{a posteriori}, 
thanks to the fact that these measure-valued solutions are $L^1$ stable (unique). 
After the works of Tartar, DiPerna, and others, weak compactness 
arguments of this type (propagation of 
compactness) are frequently used in the nonlinear PDE 
literature, cf., e.g., \cite{Eymard:2000fr,Malek1996,Panov:1996aa,Szepessy:1989vn}, and 
recently in the context of stochastic equations \cite{Bauzet:2015aa,Bauzet:2014aa,Bauzet:2012kx,Biswas:2014gd,KarlsenStorroesten2014,Vallet:2009uq}.

Our second main contribution is an $L^1$ error estimate of 
the form $O(\Delta t^{\frac13})$, for 
homogeneous noise functions $\sigma=\sigma(u)$. 
Except for the expected convergence rate for the vanishing viscosity method 
\cite{Chen:2011fk}, this appears to be the first 
error estimate derived for approximate solutions to stochastic conservation laws. 
The rate $\frac13$ should be compared to the first 
order convergence rate available for conservation 
laws with deterministic source \cite{Langseth:1996jw}.
Our proof relies on $BV$ estimates and a generalization 
of the Kru{\v{z}}kov entropy condition, allowing 
the ``Kru{\v{z}}kov constants" to be Malliavin 
differentiable random variables, which was put 
forward in the recent work \cite{KarlsenStorroesten2014}.

The remaining part of this paper is organized as follows: Section \ref{seq:prelim} 
collects some preliminarily material along with the 
relevant notion of (stochastic entropy) solution.
The operator splitting method is defined precisely in Section \ref{seq:SD-OS}. 
A series of a priori estimates are derived in Section \ref{sec:apriori}, which 
are subsequently used in Section \ref{sec:conv} to prove 
convergence towards a stochastic entropy solution. Section \ref{sec:error} 
is devoted to the proof of the error estimate. 
Section \ref{sec:appendix} is an appendix collecting 
some definitions and useful results used elsewhere in the paper.
 
\section{Preliminaries}\label{seq:prelim}
In this article, as in \cite{KarlsenStorroesten2014}, we apply certain 
weighted $L^p$ spaces. Since we do \emph{not} assume $\sigma(x,0) \equiv 0$, 
weighted spaces on $\R^d$ provide a convenient alternative to working on 
the torus as in \cite{DebusscheVovelle2010, Debussche:2016aa}. 
The weights used herein turns out to be suitable also for the 
fractional $BV_x$ estimates, cf. Proposition~\ref{prop:fractional-BV}.

Let $\mathfrak{N}$ be the set of all nonzero $\phi \in C^1(\R^d) \cap L^1(\R^d)$ for
which there exists a constant $C$ such that $\abs{\nabla\phi} \le C
\phi$. An example is $\phi(x) = e^{-\sqrt{1 + \abs{x}^2}}$. Set 
\begin{equation*}
  C_\phi=\inf\seq{C \st \abs{\nabla\phi}\le C\phi}.
\end{equation*}
For $\phi \in \mathfrak{N}$, we use the weighted 
$L^p$-norm $\norm{\cdot}_{p,\phi}$ defined by
\begin{displaymath}
 \norm{u}_{p,\phi} := \left(\int_{\R^d} \abs{u(x)}^p \phi(x)\,dx\right)^{1/p}.
\end{displaymath}
The corresponding weighted $L^p$-space is denoted by
$L^p(\R^d,\phi)$. Similarly, we define
\begin{equation}\label{eq:DefWInfNorm}
 \hphantom{XXX} \norm{u}_{\infty,\phi^{-1}} := 
 \sup_{x \in \R^d}\Set{\frac{\abs{u(x)}}{\phi(x)}}, \quad u \in C(\R^d).
\end{equation}
Some useful results regarding functions in $\mathfrak{N}$ are 
collected in Section~\ref{sec:WeightedLp}. 

We denote by $\mathscr{E}$ the set of non-negative convex 
functions in $C^2(\R)$ such that $S'$ is bounded and $S''$ compactly supported. 
A pair of functions $(S,Q) $ is called an entropy/entropy-flux pair if 
$S:\R\to\R$ is $C^2$ and $Q=(Q_1,\ldots,Q_d):\R \mapsto\R^d $ satisfies 
$Q' = S' f'$. An entropy/entropy-flux pair $(S,Q)$ is said to 
belong to $\mathscr{E}$ if $S$ belongs to $\mathscr{E}$. 

Let $\Pred$ denote the predictable $\sigma$-algebra 
on $[0,T] \times \Omega$ with respect to $\seq{\fcal_t}$, 
see, e.g., \cite[\S~2.2]{ChungWilliams2014}. 
In general we are working with eqiuvalence classes of functions with respect 
to the measure $dt \otimes dP$. The equivalence class $u$ is 
said to be \emph{predictable} 
if it has a version $\tilde{u}$ that is $\Pred$-measurable. 
Equivalently, we could ask for any representative to be $\Pred^*$ 
measurable, where $\Pred^*$ is the completion of $\Pred$ with 
respect to $dt \otimes dP$. Note that any (jointly) measurable 
and adapted process is $\Pred^*$-measurable, 
cf., e.g., \cite[Theorem~3.7]{ChungWilliams2014}.

Next we collect some basic material related to Malliavin calculus. 
We refer to \cite{Nualart2006} for an introduction to the topic. 
The Malliavin calculus is developed with respect to the isonormal 
Gaussian process $W:L^2([0,T]) \rightarrow \mathcal{H}^1$, defined 
by $W(h) := \int_0^Th\,dB$. Here $\mathcal{H}^1$ is the 
subspace of $L^2(\Omega,\fcal,P)$ consisting of zero-mean Gaussian random variables. 
We denote by $\mathcal{S}$ the class of smooth random variables of the form
\begin{displaymath}
 V = f(W(h_1), \dots, W(h_n)),
\end{displaymath}
where $f \in C^\infty_c(\R^n)$, $h_1, \dots, h_n \in L^2([0,T])$ and $n \geq 1$. 
For such random variables, the Malliavin derivative is defined by
\begin{displaymath}
 DV = \sum_{i = 1}^n \partial_if(W(h_1), \dots, W(h_n))h_i,
\end{displaymath}
where $\partial_i$ denotes the derivative with respect to the $i$-th variable. 
The space $\mathcal{S}$ is dense in $L^2(\Omega,\fcal,P)$. Furthermore, the operator 
$D$ is closable as a map from $L^2(\Omega)$ to 
$L^2(\Omega;L^2([0,T]))$ \cite[Proposition~1.2.1]{Nualart2006}. 
The domain of $D$ in $L^2(\Omega)$ is denoted by $\D^{1,2}$. 
That is, $\D^{1,2}$ is the closure of $\mathcal{S}$ with respect to the norm
\begin{displaymath}
 \norm{V}_{\D^{1,2}} = \seq{\E{\abs{V}^2} + \E{\norm{DV}_{L^2([0,T])}^2}}^{1/2}.
\end{displaymath}
For the generalization of the above notations and results to Hilbert space-valued 
random variables, see \cite[Remark~2, p.31]{Nualart2006}.

We use the notion of stochastic entropy solution 
introduced in \cite{KarlsenStorroesten2014}, which is a refinement 
of the notion introduced by Feng and Nualart \cite{FengNualart2008}.
\begin{definition}\label{def:stoch-es}
Fix $\phi \in \mathfrak{N}$. A stochastic entropy solution $u$ of
 \eqref{eq:SBL}--\eqref{eq:SBL-data} with $u_0 \in 
 L^2(\Omega,\fcal_0,P;L^2(\R^d,\phi))$, is a stochastic process 
 $$
  u=\Set{u(t)= u(t,x)=u(t,x;\omega)}_{t\in [0,T]}
 $$ satisfying the following conditions:
 \begin{itemize}
  \item[(i)] $u$ is a predictable process in $L^2([0,T] \times \Omega;L^2(\R^d,\phi))$.
  \item[(ii)] For any random variable $V \in \D^{1,2}$ and 
  any entropy, entropy-flux pair $(S,Q) \in \mathscr{E}$,
    \begin{equation*}
     \begin{split}
    \qquad E\bigg[\iint_{\Pi_T} S(u-V)\partial_t\test & 
    + Q(u,V)\cdot \nabla \test \,dxdt + \int_{\R^d} S(u_0(x)-V)\test(0,x) \,dx\bigg] \\
    &-\E{\iint_{\Pi_T} S''(u-V)\sigma(x,u)D_tV \test \,dxdt} \\
    &+\frac{1}{2}\E{\iint_{\Pi_T} S''(u-V)\sigma(x,u)^2 \test \,dxdt} \ge 0,
     \end{split}
    \end{equation*}
 for all non-negative $\test \in C^\infty_c([0,T) \times \R^d)$.
 \end{itemize}
\end{definition} 
Here $L^2([0,T] \times \Omega;L^2(\R^d,\phi))$ denotes the Lebesgue-Bochner 
space and $D_tV$ denotes the Malliavin derivative of $V$ evaluated at time $t$. 
By \cite[Lemma~2.2]{KarlsenStorroesten2014} it suffices to consider $V \in \mathcal{S}$ in (ii).
In \cite{KarlsenStorroesten2014}, the existence and 
uniqueness of entropy solutions in the sense of 
Definition \ref{def:stoch-es} is established under 
assumptions \eqref{eq:fluxLip-ass}, \eqref{eq:sigma-ass}, 
and \eqref{eq:sigma-ass-xdep}. We also mention that whenever 
$u^0 \in L^p(\Omega;L^p(\R^d,\phi))$ with $2 \le p < \infty$,
\begin{equation*}
 \esssup_{0 \le t \le T}\seq{\E{\norm{u(t)}_{p,\phi}^p}} < \infty.
\end{equation*}

Let $\seq{J_\delta}_{\delta > 0}$ be a sequence of 
symmetric mollifiers on $\R^d$, i.e.,  
\begin{equation}\label{eq:MollifierDef}
 J_\delta(x) =\tfrac{1}{\delta^d}J\left(\tfrac{x}{\delta}\right),                                                                    
\end{equation}
where $J \geq 0$ is a smooth, symmetric function 
satisfying $\text{supp}\, (J)\subset B(0,1)$ and $\int J=1$. 
For $d = 1$, we set $J^+(x) = J(x - 1)$, so that 
$\text{supp}\,(J^+) \subset (0,2)$.

Under the additional assumption 
\eqref{eq:sigma-ass-xdep}, \cite[Proposition~5.2]{KarlsenStorroesten2014} 
asserts that the entropy solution $u$ satisfies
\begin{multline}\label{eq:EntSolSpatReg}
 \E{\iint_{\R^d \times \R^d} \abs{u(t,x+z)-u(t,x-z)}J_r(z)\phi(x)\,dx} \\
    \le e^{C_\phi\norm{f}_{\mathrm{Lip}}t}\E{\iint_{\R^d \times \R^d} \abs{u_0(x+z)-u_0(x-z)}
    J_r(z)\phi(x)\,dx} + \ocal(r^{\kappas}),
\end{multline}
where $\kappas$ is given in \eqref{eq:sigma-ass-xdep}. 
Whenever $\sigma(x,u) = \sigma(u)$, the last term 
on the right-hand side vanishes, i.e., $\ocal(\ldots) = 0$.

\section{Operator splitting}\label{seq:SD-OS}
We will now describe the basic operator splitting method 
for \eqref{eq:SBL}. Let $\scl(t)$ be the solution operator that maps an initial 
function $v_0(x)$ to the unique entropy solution 
of the deterministic conservation law
\begin{equation}\label{eq:CL}
	\partial_t v+ \Div f(v)=0, \qquad v(0,x)=v_0(x),
\end{equation}
that is, if $v(t) := \scl(t)v_0$, then $v$ is the 
unique entropy solution of \eqref{eq:CL}. 
More precisely, for each $\tau \in [0,T]$,
\begin{multline*}
 \int_{\R^d} \abs{v^0(x)-c}\test(0,x)\,dx - \int_{\R^d} \abs{v(\tau)-c}\test(\tau,x)\,dx \\
  + \int_0^\tau\int_{\R^d} \abs{v-c}\partial_t\test + \sgn{v-c}(f(v)-f(c))\cdot \nabla \test \,dxdt \ge 0,
\end{multline*}
for all $c \in \R$ and all non-negative $\test \in C^\infty_c([0,T) \times \R)$. 
Note that the integrals are well defined due to the 
global Lipschitz assumption \eqref{eq:fluxLip-ass}.
Recall that the entropy solution has a version that belongs to 
$C([0,T];L^1_{\mathrm{loc}}(\R^d))$ \cite{CancesClementGallouet2011}. 
As we frequently need to consider the evaluation $v(t)$ it is convenient 
for us to assume that $v$ has this property.
Let $u,v \in L^1(\R^d,\phi)$ where $\phi \in \mathfrak{N}$. 
Then, for any $t \in [0,T]$,
\begin{equation*}
 \norm{\scl(t)v-\scl(t)u}_{1,\phi} \le 
 e^{C_\phi\norm{f}_{\mathrm{Lip}}t}\norm{u-v}_{1,\phi}.
\end{equation*}
Suppose $u \in L^1(\Omega,\fcal_s,P;L^1(\R^d,\phi))$ for some $s \in [0,T]$. 
Let $s \leq t \leq T$. By considering the composition $\Omega \ni \omega \mapsto u(\omega) 
\mapsto \scl(t-s)u(\omega)$, it follows that $\scl(t-s)u$ is $\fcal_s$-measurable as an 
element in $L^1(\R^d,\phi)$, cf.~\cite[\S~3.3]{MishraSchwab2012}.

Similarly, for $s \leq t \leq T$, we let $\ssde(t,s)$ denote the 
two-paramater semigroup defined by $\ssde(t,s)w^s = w(t)$, where $w$ is the strong solution of
\begin{equation*}
 w(t,x) = w^s(x) + \int_s^t \sigma(x,w(r,x)) \,dB(r).
\end{equation*}
Suppose $w^s,v^s \in L^1(\Omega,\fcal_s,P;L^1(\R^d,\phi))$. Then
\begin{equation}\label{eq:SSDE1Contraction}
 \E{\norm{\ssde(t,s)w^s-\ssde(t,s)v^s}_{1,\phi}} = \E{\norm{w^s-v^s}_{1,\phi}}.
\end{equation}
To see this, let $S_\delta \rightarrow \abs{\cdot}$ as 
$\delta \downto 0$ and consider the quantity $S_\delta(w(t,x)-v(t,x))$. 
Next, apply It\^o's formula, multiply by $\phi$ and let $\delta \downarrow 0$. 
Due to \eqref{eq:SSDE1Contraction},
\begin{equation*}
 \ssde(\cdot,s): L^1(\Omega,\fcal_s,P;L^1(\R^d,\phi)) \rightarrow 
 L^1([s,T] \times \Omega,\Pred_{[s,T]},dt \otimes \,dP;L^1(\R^d,\phi)),
\end{equation*}
where $\Pred_{[s,T]}$ denotes the predictable 
$\sigma$-algebra relative to $\seq{\fcal_t}_{s \le t \le T}$ on $[s,T] \times \Omega$.

Fix $N \in \N$, specify $\Dt = T/N$, and set $t_n = n\Dt$. The operator splitting 
for \eqref{eq:SBL}, with initial condition $u^0 = u^0(x;\omega)$, is the 
sequence $\seq{u^n = u^n(x;\omega)}_{n = 0}^N$ defined recursively by
\begin{equation}\label{eq:SD-split}
 u^{n+1}(x;\omega) = \left[\ssde(t_{n+1},t_n;\omega) \circ \scl(\Dt)\right]u^n(x;\omega),
\end{equation} 
for $n = 0,1, \dots ,N-1$. A graphical representation of 
the splitting is given in Figure~\ref{fig:udtvdtRepr}.

\begin{figure}[h]
\begin{tikzpicture}

\draw[thick, ->] (0,0) -- (4,0) node[right]{$\ssde$};
\draw[thick, ->] (0,0) -- (0,4) node[above]{$\scl$};

\draw (1,-0.1) -- (1,0.1) node[below=8pt]{$t_{n-1}$};
\draw (2,-0.1) -- (2,0.1) node[below=8pt]{$t_n$};
\draw (3,-0.1) -- (3,0.1) node[below=8pt]{$t_{n+1}$};

\draw (-0.1,1) -- (0.1,1) node[left=8pt]{$t_{n-1}$};
\draw (-0.1,2) -- (0.1,2) node[left=8pt]{$t_n$};
\draw (-0.1,3) -- (0.1,3) node[left=8pt]{$t_{n+1}$};

\filldraw (1,1) circle (2pt) node[right]{$u^{n-1}$};
\filldraw (2,2) circle (2pt) node[right]{$u^{n}$};
\filldraw (3,3) circle (2pt) node[right]{$u^{n+1}$};

\draw (1,1) -- (1,2) -- (2,2) -- (2,3) -- (3,3);
\draw[thick, loosely dotted] (3.3,3.3) -- (3.7,3.7);
\draw[thick, loosely dotted] (0.3,0.3) -- (0.7,0.7);

\draw [-o] (2,2) -- (2,2.66) node[left=2pt]{$\vdt$};
\draw [-o] (2,3) -- (2.66,3) node[above=2pt]{$\udt$};

\end{tikzpicture}
\caption{ A graphical representation of $\seq{u^n}, \udt, \vdt$.}
\label{fig:udtvdtRepr}
\end{figure}

To investigate the convergence of the semi-discrete splitting algorithm 
\eqref{eq:SD-split},  we need to work with functions that are not only defined 
for each $t_n = n\Dt$, but in the entire interval $[0,T]$. 
To this end, we introduce two different ``time-interpolants" 
$\udt(t)=\udt(t,x;\omega)$ and $\vdt(t)=\vdt(t,x;\omega)$, defined 
for $n=0,\ldots,N-1$ by
\begin{equation}\label{eq:udt}
	\udt(t)= \ssde\left(t,t_n\right) \circ \scl(\Dt)u^n, \qquad t\in (t_n,t_{n+1}],	 
\end{equation}
and
\begin{equation}\label{eq:vdt}
	\vdt(t) = \scl(t-t_n)u^n,  \qquad  t \in [t_n,t_{n+1}),
\end{equation}
respectively, cf.~Figure~\ref{fig:udtvdtRepr}. As $\udt$ is 
discontinuous at $t_n$ we introduce the right limit 
$\udt((t_n)+) = \scl(\Dt)u^n$. Similarly, let 
$\vdt((t_{n+1})-) = \scl(\Dt)u^n$.

\section{A priori estimates}\label{sec:apriori}
To establish the convergence of $\Set{\udt}_{\Dt>0}, \Set{\vdt}_{\Dt > 0}$ 
we will need a series of a priori estimates. 
These are also crucial when deriving the error estimate. 
The following result explains the introduction of the weight functions $\mathfrak{N}$.
\begin{proposition}[Local $L^p$ estimates]\label{prop:lpLoc-est}
  Suppose $\eqref{eq:fluxLip-ass}$ and $\eqref{eq:sigma-ass}$ are 
  satisfied, $2 \nobreak \le  \nobreak p < \infty$ and $M \ge \norm{f}_{\mathrm{Lip}}$. 
  Let $\seq{u^n}$ be the splitting solutions defined by $\eqref{eq:SD-split}$, 
  with initial condition $u^0 \in L^p(\Omega,\fcal_0,P;L^p_\mathrm{loc}(\R^d))$. 
  For $t\in (0,T)$ and $R > 0$, set $\Gamma(t) = \max\{0,R-Mt\}$. 
  Suppose $\phi \in C^1(\R)$ is non-negative and 
  satisfies $\abs{\nabla \phi} \le C_\phi\phi$. Then there 
  exist constants $C_1$ and $C_2$ depending only on $p,\sigma,f,C_\phi$ such that
  \begin{multline}\label{eq:lpLoc-est}
    \E{\int_{B(0,\Gamma(t_n))} \abs{u^n(x)}^p\phi(x)\,dx} \le e^{C_1t_n}
    \E{\int_{B(0,R)} \abs{u^0(x)}^p\phi(x)\,dx}\\
    + C_2t_ne^{C_1t_n}\int_{B(0,R)} \phi(x)\,dx.
  \end{multline}
  If $\sigma(x,0) = 0$, then $C_2 = 0$. Here, $B(0,R)$ denotes the 
  open ball with radius $R$ centered at $0$.
\end{proposition}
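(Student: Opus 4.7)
The plan is to track the weighted $L^p$-mass on the shrinking ball $B(0,\Gamma(t))$ through each of the two half-steps in \eqref{eq:SD-split}, and then iterate. Because $\sigma$ acts pointwise in $x$, the SDE step preserves spatial supports, whereas the CL step propagates information at a speed bounded by $M \ge \norm{f}_\mathrm{Lip}$; this is precisely why $\Gamma$ is chosen to shrink at rate $M$.

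For the CL half-step applied to $v(t) := \scl(t)v_0$, the convex entropy $S(v)=\abs{v}^p$ has an entropy flux $Q$ with $\abs{Q(v)} \le M\abs{v}^p$, so $\partial_t\abs{v}^p + \Div Q(v) \le 0$ in $\mathcal{D}'$. Testing against $\chi(x,t)\phi(x)$, where $\chi$ is a smooth radial cutoff approximating $\mathbf{1}_{\seq{\abs{x}<r-Mt}}$ and satisfying $\partial_t \chi + M\abs{\nabla\chi} \le 0$ pointwise, the terms $\abs{v}^p\partial_t\chi\,\phi$ and $Q(v)\cdot\nabla\chi\,\phi$ combine non-positively. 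The remaining contribution $Q(v)\cdot\nabla\phi\,\chi$ is controlled by $MC_\phi\abs{v}^p\phi\chi$ thanks to $\abs{\nabla\phi}\le C_\phi\phi$, and Gronwall yields
\begin{equation*}
  \int_{B(0,r-M\tau)} \abs{v(\tau)}^p \phi \,dx
  \le e^{MC_\phi \tau}\int_{B(0,r)} \abs{v_0}^p \phi \,dx.
\end{equation*}

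For the SDE half-step applied on $B(0,\Gamma(t_{n+1}))$, regularize $\abs{\cdot}^p$ and apply It\^o's formula pointwise in $x$ to $w(t,x)=\ssde(t,t_n)[\scl(\Dt)u^n](x)$, multiply by $\phi(x)$, integrate over the ball, take expectation, and pass to the limit. The stochastic integral is a martingale under the growth implied by \eqref{eq:sigma-ass}, so only the It\^o correction remains:
\begin{equation*}
  \frac{d}{dt}\E{\int \abs{w}^p \phi \,dx}
  = \frac{p(p-1)}{2}\E{\int \abs{w}^{p-2}\sigma(x,w)^2\phi \,dx}.
\end{equation*}
The growth bound from \eqref{eq:sigma-ass} combined with a Young inequality gives $\abs{w}^{p-2}\sigma(x,w)^2 \le C(1 + \abs{w}^p)$, and the additive $1$ is absent when $\sigma(x,0)=0$. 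Another Gronwall application then bounds this half-step by a factor $e^{C\Dt}$ plus a source $C\Dt\int_{B(0,R)}\phi\,dx$.

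Composing the two one-step bounds yields the recursion
\begin{equation*}
  \E{\int_{B(0,\Gamma(t_{n+1}))} \abs{u^{n+1}}^p \phi \,dx}
  \le e^{C_1\Dt}\E{\int_{B(0,\Gamma(t_n))}\abs{u^n}^p \phi \,dx}
     + C_2 \Dt \int_{B(0,R)}\phi \,dx,
\end{equation*}
with $C_1$ absorbing $MC_\phi$ together with the It\^o constant and $C_2=0$ in the $\sigma(x,0)=0$ case. Iterating $n$ times and summing the geometric series $\sum_{k=0}^{n-1} e^{C_1 k\Dt}\Dt \le t_n e^{C_1 t_n}$ produces exactly \eqref{eq:lpLoc-est}. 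The most delicate point is the CL step: the weight $\phi$ spoils the exact Kru{\v{z}}kov cone-of-dependence cancellation, leaving the extra $C_\phi$-weighted interior term which forces the exponential factor, and one must pass carefully from the smooth radial cutoffs to the indicator of $B(0,\Gamma(t))$ (using properties of $\mathfrak{N}$ from Section~\ref{sec:WeightedLp}) without creating spurious boundary artifacts.
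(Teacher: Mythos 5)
Your proposal follows essentially the same route as the paper: a CL half-step estimate obtained by testing the entropy inequality against a cone-shaped cutoff times $\phi$ (so that the cone terms combine non-positively by $\abs{Q}\le M\abs{\cdot}^p$, while $Q\cdot\nabla\phi$ produces the Gronwall source via $\abs{\nabla\phi}\le C_\phi\phi$), an SDE half-step estimate via It\^o's formula with the growth bound on $\sigma$ and a Young-type inequality producing $\abs{w}^{p-2}\sigma(x,w)^2\lesssim 1+\abs{w}^p$, and then iteration of the resulting one-step recursion. The only differences are cosmetic (you use a pointwise Young inequality where the paper uses an integral H\"older/Young, and you absorb $M$ rather than the sharper $\norm{f}_{\mathrm{Lip}}$ into the Gronwall constant), and one should be slightly more careful to phrase the CL step for $C^2$ entropies with bounded derivative and pass to $\abs{\cdot}^p$ in the limit, as the paper does.
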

\begin{remark}
 Suppose $\phi \in \mathfrak{N}$ and $u^0 \in L^p(\Omega;L^p(\R^d,\phi))$. Then $\phi \in L^1(\R^d)$ and the right hand side of \eqref{eq:lpLoc-est} is bounded independently of $R > 0$. It follows that $u^n \in L^p(\Omega;L^p(\R^d,\phi))$.
\end{remark}
\begin{proof}
\textit{1.~Deterministic step.} We want to prove the following:
 With $1 \le p < \infty$, let $v^0 \in L^p_{\mathrm{loc}}(\R^d)$ 
 and $v(t) = \scl(t)v^0$. Then, for any $0 < \tau \leq T$,
 \begin{equation}\label{eq:LpLocEstDetStep}
  \int_{B(0,\Gamma(\tau))} \abs{v(\tau,x)}^p\phi(x) \,dx 
  \le e^{\norm{f}_\mathrm{Lip}C_\phi t}\int_{B(0,R)} \abs{v^0(x)}^p\phi(x) \,dx.
 \end{equation}
 We might as well assume $\Gamma(\tau) > 0$. 
 As $v$ is an entropy solution of \eqref{eq:CL}, 
 \begin{equation}\label{eq:entIneqv}
  \iint_{\Pi_T}\int_{\R^d} S(v(t,x))\partial_t\test 
+ Q(v(t,x)) \cdot \nabla_x \test \,dxdt
  + \int_\R S(v^0(x))\test(0,x)\,dx \ge 0,
 \end{equation}
 for all nonnegative $\test \in C^\infty_c([0,T) \times \R^d)$, for any 
 convex $S\in C^2$ with $S'$ bounded and $Q'=S'f'$.
 Let $0 < \delta < \mathrm{min}\seq{\Gamma(\tau),\frac{1}{2}\tau}$. 
 Take $\test(t,x) = \psi_\delta(t)H_\delta(\Gamma(t),\abs{x})\phi(x)$, where
 \begin{equation*}
  \psi_\delta(t) = 1- \int_0^t J_\delta^+(\tau-\zeta)\,d\zeta \quad \mbox{ and } 
  \quad H_\delta(L,r) = \int_{-\delta}^{L} J_\delta(\zeta-r)\,d\zeta.
 \end{equation*}
 Under the assumption $\phi \in C^\infty(\R^d)$ it follows that $\test$ is a 
 non-negative function in $C^\infty_c([0,T) \times \R^d)$. However, by 
 approximation, it suffices with $\phi \in C^1(\R^d)$ for inequality \eqref{eq:entIneqv} to hold true. 
 Recall that $\frac{d}{dt} \Gamma(t) = -M$ for all $0 \leq t \leq \tau$ and observe that 
 \begin{align*}
  \partial_t \test(t,x) &= -J_\delta^+(\tau-t)H_\delta(\Gamma(t),\abs{x})\phi(x) 
  -M\psi_\delta(t)J_\delta(\Gamma(t)-\abs{x})\phi(x), \\
  \nabla \test(t,x) &= -\psi_\delta(t)J_\delta(\Gamma(t)-\abs{x})\frac{x}{\abs{x}}\phi(x) 
  + \psi_\delta(t)H_\delta(\Gamma(t),\abs{x})\nabla\phi(x). 
 \end{align*}
 Hence, 
 \begin{equation}\label{eq:LplocCLIneq}
  \begin{split}
  \int_\R S(v^0(x))&H_\delta(R,\abs{x})\phi(x)\,dx \ge
    \iint_{\Pi_T} S(v(t,x))J_\delta^+(\tau-t)
    H_\delta(\Gamma(t),\abs{x})\phi(x)\,dxdt \\
   &+\underbrace{\iint_{\Pi_T} \left(Q(v(t,x))\cdot\frac{x}{\abs{x}} 
   + MS(v(t,x))\right) \psi_\delta(t)J_\delta(\Gamma(t)-\abs{x})\phi(x)\,dxdt}_{\Term^1} \\
   &-\underbrace{\iint_{\Pi_T}Q(v(t,x))\psi_\delta(t)H_\delta(\Gamma(t),\abs{x})
   \cdot \nabla \phi(x)\,dxdt}_{\Term^2}.
  \end{split}
 \end{equation}
 Suppose $S'(0) = S(0) = 0$. Then
 \begin{equation*}
  \abs{Q(v)} = \abs{\int_0^v S'(z)f'(z)\,dz} 
\le \norm{f}_{\mathrm{Lip}}S(v).
 \end{equation*}
 It follows as $M \ge \norm{f}_\mathrm{Lip}$ that $\Term^1 \ge 0$. 
 Due to the assumption on $\phi$,
 \begin{equation*}
  \abs{\Term^2} \le \norm{f}_{\mathrm{Lip}}
  C_\phi\iint_{\Pi_T} S(v)\psi_\delta(t)H_\delta(R,\abs{x})\phi(x)\,dxdt.
 \end{equation*}
 Sending $\delta \downarrow 0$, inequality \eqref{eq:LplocCLIneq} then takes the form
 \begin{equation*}
  X(\tau) \le X(0) + \norm{f}_\mathrm{Lip}C_\phi \int_0^\tau X(r)\,dr,
 \end{equation*}
 where 
 \begin{equation*}
  X(t) = \int_{B(0,\Gamma(t))} S(v(t,x))\phi(x)\,dx.
 \end{equation*}
 Next, apply Gr\"onwall's inequality. 
 The estimate \eqref{eq:LpLocEstDetStep} 
 follows upon letting $S \rightarrow \abs{\cdot}^p$ and 
 applying the dominated convergence theorem.

 \textit{2.~Stochastic step.} We want to prove the following: Fix $2 \le p < \infty$. 
 Suppose $w(s) \in L^p(\Omega,\fcal_s,P;L^p_\mathrm{loc}(\R))$ and 
 take $w(t) = \ssde(t,s)w(s)$ for $s \leq t$. For any $R > 0$ there exist 
 constants $C_3$ and $C_2$ depending only on $p$ and $\sigma$ such that 
 \begin{multline}\label{eq:LpLocEstStochastic}
  \E{\int_B \abs{w(t,x)}^p\phi(x)\,dx} \le e^{C_3(t-s)}\Bigl(\E{\int_B \abs{w(s,x)}^p\phi(x)\,dx} \\
  + C_2(t-s)\int_B\phi(x)\,dx\Bigr).
 \end{multline}
 If $\sigma(x,0) = 0$, then $C_2 = 0$. 

 By Ito's lemma, 
 \begin{equation*}
  dS(w)=\frac{1}{2}S^{\prime\prime}(w) \sigma(x,w)^2\,dt 
  +S'(w) \sigma(x,w)\, dB,
 \end{equation*}
 for any $S \in C^2$.  Without loss of generality, we can assume $p=2,4,6,\ldots$. 
 Taking $S(u)=\abs{u}^p$, multiplying by $\phi$, and integrating over $B = B(0,R)$, we arrive at
 \begin{multline*}
   \E{\int_B \abs{w(t,x)}^p\phi(x)\,dx} - \E{\int_B\abs{w(s,x)}^p\phi(x)\,dx} \\
   \le \frac{p(p-1)}{2} \int_s^t \E{\int_{B}
     w(r,x)^{p-2}\sigma(x,w(r,x))^2\phi(x)\,dx} \,dr.
 \end{multline*}
 Recall that $\sigma(x,w) \le \abs{\sigma(x,0)} +
 \norm{\sigma}_{\mathrm{Lip}}\abs{w}$. Hence, according to
 assumption~\eqref{eq:sigma-ass},
 \begin{align*}
   \Term^3 &:= \frac{p(p-1)}{2}\E{\int_{B}w(r,x)^{p-2}
	\sigma(x,w(r,x))^2\phi(x) \, dx} \\
   & \le 
   p(p-1)\Bigl(\norm{\sigma(\cdot,0)}_\infty^2
	\E{\int_{B}\abs{w(r,x)}^{p-2}\phi(x)\,dx} 
   \\
   & \hphantom{\le 
     p(p-1)\Bigl(}\quad 
   +
   \norm{\sigma}_{\mathrm{Lip}}^2
	\E{\int_B\abs{w(r,x)}^p\phi(x)\,dx}\Bigr).
 \end{align*}
 Applying H\"older's inequlity with $\theta = \frac{p}{p-2}$ and $\theta' = \frac{p}{2}$,
 \begin{equation*}
   \int_B \underbrace{\left(\abs{w(r,x)}^p\phi(x)\right)^{\frac{1}{\theta}}
   \phi(x)^{\frac{1}{\theta'}}}_{\abs{w(r,x)}^{p-2}\phi(x)}\,dx 
   	\leq \underbrace{\left(\int_B \abs{w(r,x)}^p\phi(x)\,dx\right)^{\frac{1}{\theta}}}_A
	\underbrace{\left(\int_B\phi(x)\,dx\right)^{\frac{1}{\theta'}}}_B
 \end{equation*} 
 Due to Young's inequality $AB \leq \frac{1}{\theta}A^\theta + \frac{1}{\theta'}B^{\theta'}$. It follows that
 \begin{equation*}
   \int_{B}\abs{w(r,x)}^{p-2}\phi(x)\,dx \le
   \frac{p-2}{p}\int_B\abs{w(r,x)}^p\phi(x)\,dx +
   \frac{2}{p}\int_B\phi(x)\,dx. 
 \end{equation*}
 Consequently,
 \begin{multline*}
   \Term^3 \le \underbrace{(p-1)\left((p-2)
\norm{\sigma(\cdot,0)}_\infty +
p\norm{\sigma}_{\mathrm{Lip}}^2\right)}_{C_3}
\E{\int_B\abs{w(r,x)}^p\phi(x)\,dx}
\\
+ \underbrace{2(p-1)
\norm{\sigma(\cdot,0)}_\infty^2}_{C_2}\int_B
   \phi(x)\,dx.
 \end{multline*}
 It follows that 
 \begin{multline*}
   \E{\int_B \abs{w(t,x)}^p\phi(x)\,dx} 
	\le \E{\int_B \abs{w(s,x)}^p\phi(x)\,dx} \\
   + C_3 \int_s^t \E{\int_B \abs{w(r,x)}^p\phi(x)\,dx}\,dr +
   C_2\left(\int_B \phi(x)\,dx\right)(t-s).
 \end{multline*}
 This inequality is of the general form
 \begin{equation}\label{eq:X-eq}
	X(t) \le X(s)  + \int_s^t  K(r) X(r)\, dr + \int_s^t H(r)\,dr.
 \end{equation}
 Appealing to Gr\"onwall's inequality,
 \begin{equation}\label{eq:gronwall}
 	X(t) \le \exp\left[\int_s^t K(r)\, dr\right]  X(s) 
 	+ \int_s^t \exp\left[\int_r^t K(u)\, du\right]  
	H(r)\, dr.
 \end{equation}
 Identifying $K=C_3$ and $H=C_2\norm{\phi}_{L^1(B)}$, it follows that 
 \begin{multline*}
 	\E{\int_B \abs{w(t,x)}^p\phi(x)\,dx}
 	\le e^{C_3(t-s)}\E{\int_B 
	\abs{w(s,x)}^p\phi(x)\,dx} \\
 	+C_2\norm{\phi}_{L^1(B)}\int_s^t e^{C_3(t-r)}\,dr.
 \end{multline*}
 Next, observe that $e^{C_3(t-r)} \leq e^{C_3(t-s)}$ for 
 all $s \leq r \leq t$, and so \eqref{eq:LpLocEstStochastic} follows.

 \textit{3.~Inductive step.} Let $P_n$ be the statement 
that \eqref{eq:lpLoc-est} is true,
and note that $P_0$ is trivially true. We must show that $P_n$ implies $P_{n+1}$. 
 By \eqref{eq:SD-split}, $u^{n+1} = \ssde(t_{n+1},t_n)\scl(\Dt)u^n$. 
 Recall that $\vdt((t_{n+1})-) = \scl(\Dt)u^n$. By \eqref{eq:LpLocEstDetStep},
 \begin{multline*}
  \E{\int_{B(0,\Gamma(t_{n+1}))} \abs{\vdt((t_{n+1})-,x)}^p\phi(x)\,dx} \\
  \le e^{\norm{f}_\mathrm{Lip}C_\phi \Dt}\E{\int_{B(0,\Gamma(t_n))} \abs{u^n(x)}^p\phi(x) \,dx}.
 \end{multline*}
 Since $u^{n+1} = \ssde(t_{n+1},t_n)\vdt((t_{n+1})-)$ it 
 follows from \eqref{eq:LpLocEstStochastic} that
 \begin{align*}
  &\E{\int_{B(0,\Gamma(t_{n+1}))} \abs{u^{n+1}(x)}^p\phi(x) \,dx} \le e^{C_3\Dt}
   \\ & \quad 
   \times 
   \Bigg(\E{\int_{B(0,\Gamma(t_{n+1}))} \abs{\vdt((t_{n+1})-,x)}^p\phi(x)\,dx} 
  + C_2\int_{B(0,\Gamma(t_{n+1}))}\phi(x)\,dx\Dt\Bigg).
 \end{align*}
 Combining the two previous estimates,
 \begin{align*}
  &\E{\int_{B(0,\Gamma(t_{n+1}))} \abs{u^{n+1}(x)}^p\phi(x) \,dx} \\
  & \quad \le e^{C_3\Dt}
  \Bigg(e^{\norm{f}_\mathrm{Lip}C_\phi \Dt}\E{\int_{B(0,\Gamma(t_n))} \abs{u^n(x)}^p\phi(x) \,dx} \\
  & \hphantom{XXXXXXXXXXX}+ C_2\Dt\int_{B(0,\Gamma(t_{n+1}))}\phi(x)\,dx\Bigg) \\
  & \quad \le  e^{C_1\Dt}\Bigg(\E{\int_{B(0,\Gamma(t_n))} \abs{u^n(x)}^p\phi(x) \,dx} \\
  & \hphantom{XXXXXXXXXXX}+ C_2\Dt\int_{B(0,R)}\phi(x)\,dx\Bigg),
  \qquad C_1 = \norm{f}_\mathrm{Lip}C_\phi + C_3.
 \end{align*}
 Inserting the induction hypothesis brings 
 to an end the proof of \eqref{eq:lpLoc-est}.
\end{proof}

\begin{corollary}\label{cor:LpLocTimeIntPol}
 Let $\udt$ and $\vdt$ be defined by \eqref{eq:udt} 
and \eqref{eq:vdt}, respectively, and 
suppose $u^0$ belongs to $L^q(\Omega,\fcal_0,P;L^q(\R^d,\phi))$, $2 \le q < \infty$, 
 $\phi \in \mathfrak{N}$. Then, for each $1 \le p \le q$, there exists 
 a finite constant $C$ independent of $\Dt$ (but 
 dependent on $T,p,\phi,f,\sigma,u^0$) such that  
 \begin{equation*}
  \max\seq{\E{\norm{\udt(t)}_{p,\phi}^p},\E{\norm{\vdt(t)}_{p,\phi}^p}} \le C, \qquad t \in [0,T].
 \end{equation*}
\end{corollary}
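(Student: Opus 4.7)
The plan is to piece together the two one-step estimates \eqref{eq:LpLocEstDetStep} and \eqref{eq:LpLocEstStochastic} already proved inside Proposition \ref{prop:lpLoc-est} with the full a priori bound on $u^n$ coming from that proposition, and then to interpolate in $p$. There are essentially three steps: (i) upgrade \eqref{eq:lpLoc-est} to a global (in $x$) bound by sending $R \to \infty$, (ii) propagate this bound through a single deterministic or stochastic sub-step to reach an arbitrary $t \in [0,T]$, and (iii) reduce $1 \le p < q$ to $p = q$ by H\"older plus Jensen.

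For step (i), since $\phi \in \mathfrak{N}$ implies $\phi \in L^1(\R^d)$ and $\Gamma(t_n) \uparrow \infty$ as $R \uparrow \infty$, the monotone convergence theorem applied to both sides of \eqref{eq:lpLoc-est} produces the uniform bound
\begin{equation*}
 \sup_{0 \le n \le N} \E{\norm{u^n}_{q,\phi}^q} \le K_q := e^{C_1 T}\E{\norm{u^0}_{q,\phi}^q} + C_2 T e^{C_1 T}\norm{\phi}_{L^1(\R^d)},
\end{equation*}
with $C_1, C_2$ depending only on $q, \sigma, f, C_\phi$. For step (ii), fix $t \in [t_n, t_{n+1})$ and note $\vdt(t) = \scl(t-t_n)u^n$; running the same $R \to \infty$ argument pathwise in \eqref{eq:LpLocEstDetStep} and then taking expectation gives
\begin{equation*}
 \E{\norm{\vdt(t)}_{q,\phi}^q} \le e^{\norm{f}_{\mathrm{Lip}}C_\phi \Dt}\E{\norm{u^n}_{q,\phi}^q} \le e^{\norm{f}_{\mathrm{Lip}}C_\phi T} K_q.
\end{equation*}
For $t \in (t_n, t_{n+1}]$ we have $\udt(t) = \ssde(t,t_n)\vdt((t_{n+1})-)$; sending $R \to \infty$ in \eqref{eq:LpLocEstStochastic} and feeding in the bound just obtained on $\vdt((t_{n+1})-) = \scl(\Dt)u^n$ yields
\begin{equation*}
 \E{\norm{\udt(t)}_{q,\phi}^q} \le e^{C_3 T}\bigl(e^{\norm{f}_{\mathrm{Lip}}C_\phi T} K_q + C_2 T \norm{\phi}_{L^1(\R^d)}\bigr),
\end{equation*}
uniformly in $n$ and in $\Dt \le T$. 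Note that steps (i)--(ii) crucially use $q \ge 2$, which is already hypothesized and is required by the It\^o-based stochastic estimate.

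For step (iii), fix $1 \le p < q$ and decompose $\abs{u}^p \phi = (\abs{u}^q \phi)^{p/q}\phi^{1-p/q}$. H\"older's inequality with conjugate exponents $q/p$ and $q/(q-p)$ gives
\begin{equation*}
 \norm{u}_{p,\phi}^p \le \norm{u}_{q,\phi}^{p}\,\norm{\phi}_{L^1(\R^d)}^{1-p/q},
\end{equation*}
and since $p/q \le 1$, Jensen's inequality applied to the concave map $x \mapsto x^{p/q}$ on $[0,\infty)$ yields
\begin{equation*}
 \E{\norm{u}_{p,\phi}^p} \le \bigl(\E{\norm{u}_{q,\phi}^q}\bigr)^{p/q}\norm{\phi}_{L^1(\R^d)}^{1-p/q}.
\end{equation*}
Substituting $u = \udt(t)$ and $u = \vdt(t)$ and invoking the step (ii) bounds concludes the proof.

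There is no serious obstacle: the only minor bookkeeping points are ensuring that the $R \to \infty$ limits go through (immediate from $\phi \in L^1$ and monotone convergence) and remembering that the stochastic piece \eqref{eq:LpLocEstStochastic} demands an even exponent at least two, which is why one must first establish the $p=q$ estimate and only afterwards interpolate down to $1 \le p < q$ rather than trying to apply the It\^o argument directly at a small $p$.
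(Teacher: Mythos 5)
Your proof is correct and follows essentially the same route as the paper: reduce $1\le p<q$ to $p=q$ by H\"older's inequality (the paper's \eqref{eq:HolderFFMSpace}), obtain the uniform bound on $\E{\norm{u^n}_{q,\phi}^q}$ from Proposition~\ref{prop:lpLoc-est}, and then propagate to intermediate times via the one-step bounds \eqref{eq:LpLocEstDetStep} and \eqref{eq:LpLocEstStochastic}. You merely spell out two details the paper leaves implicit (sending $R\to\infty$ by monotone convergence, and the Jensen step for the exponent $p/q$), which is fine.
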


\begin{proof}
It suffices to prove the result for $p = q$. To this end, suppose 
$1 \le p < q$ and $w \in L^q(\R^d,\phi)$. Let $r = q/p$, $r' = q/(q-p)$, so 
that $\frac{1}{r}+ \frac{1}{r'} = 1$. Take $f = \abs{u}^p\phi^{1/r}$, $g = \phi^{1/r'}$ 
and apply H\"older's inequality. The result is
\begin{equation}\label{eq:HolderFFMSpace}
  \int_{\R^d}\abs{w(x)}^p\phi(x) \,dx \le 
  \left(\int_{\R^d}\abs{w(x)}^q\phi(x)\,dx\right)^{p/q}
  \left(\int_{\R^d}\phi(x)\,dx\right)^{1-p/q}.
 \end{equation}
 Consider the case $p = q$. By Proposition~\ref{prop:lpLoc-est}, there 
 exists a constant $C > 0$ depending only on $q,f,\sigma,u^0,T,\phi$ such that  
 \begin{equation*}
  \E{\norm{u^n}_{q,\phi}^q} \le C, \qquad 0 \le n \le N.
 \end{equation*} 
 Let $t \in [t_n, t_{n+1})$. By \eqref{eq:LpLocEstDetStep},
 \begin{equation*}
  E\Bigl[\|\underbrace{\scl(t-t_n)u^n}_{\vdt(t)}\|_{q,\phi}^q\Bigr] 
  \le e^{\norm{f}_{\mathrm{Lip}}C_\phi\Dt}\E{\norm{u^n}_{q,\phi}^q}.
 \end{equation*}
This finishes the proof for $\vdt$. For $\udt$ the result follows 
by \eqref{eq:LpLocEstStochastic}.
\end{proof}

The next result should be compared 
to \cite[Proposition~5.2]{KarlsenStorroesten2014} and \cite[\S~6]{Chen:2011fk}. 
It can be turned into a fractional $BV_x$ estimate ($L^1$ space 
translation estimate) along the lines of \cite{Chen:2011fk}, but we 
will not need this fact here.

\begin{proposition}[fractional $BV_x$ estimates]\label{prop:fractional-BV}
Suppose \eqref{eq:fluxLip-ass}, \eqref{eq:fluxDoubleDerBounded-ass}, \eqref{eq:sigma-ass}, 
and \eqref{eq:sigma-ass-xdep} are satisfied. Let $\phi \in \mathfrak{N}$. 
Suppose $u^0 \in L^2(\Omega,\fcal_0,P;L^2(\R^d,\phi))$. Let $u_\Dt$ and $\vdt$ be 
defined by \eqref{eq:udt} and \eqref{eq:vdt}, respectively. 
Then there exists a constant $C_{T}$, independent of $\Dt$, such that 
\begin{align*}
	& \Eb{\;\,\iint\limits_{\R^d \times \R^d} 
	\abs{\udt(t,x+z)-\udt(t,x-z)} J_r(z)\phi(x) \,dx\, dz}
	\\ & \quad 
	\le e^{C_\phi \norm{f}_{\mathrm{Lip}}t} \Eb{\;\,\iint\limits_{\R^d\times \R^d} 
	\abs{u^0(x+z)-u^0(x-z))} J_r(z)\phi(x) \,dx\, dz}
	+C_{T} r^\kappas, 
\end{align*}
for any $t\in (0,T)$. Here $\kappas\in (0,1/2]$ is defined 
in \eqref{eq:sigma-ass-xdep}. If $\sigma(x,u) = \sigma(u)$, then 
we may take $C_{T} = 0$. The same result 
holds with $\udt$ replaced by $\vdt$.
\end{proposition}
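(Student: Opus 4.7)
The plan is to track the functional
\[
  \Phi(t) := \E{\iint_{\R^d\times\R^d}\abs{u_\Dt(t,x+z)-u_\Dt(t,x-z)}J_r(z)\phi(x)\,dx\,dz}
\]
through each alternating deterministic/stochastic substep of the splitting, mimicking the proof of \eqref{eq:EntSolSpatReg} on each subinterval and patching the pieces together inductively over $n=0,\dots,N-1$ (with $N\Dt=T$). The analogous quantity for $\vdt$ is handled in the same way; only which substep is last changes.

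For the deterministic half-step on $[t_n,t_{n+1}]$, I exploit that $f$ is $x$-independent, so $\scl$ commutes with spatial translations, i.e.\ $\scl(\Dt)(u^n(\cdot\pm z))(x)=(\scl(\Dt)u^n)(x\pm z)$. Applying the weighted $L^1$ contraction stated just above \eqref{eq:SSDE1Contraction} pathwise with data $u^n(\cdot+z)$ and $u^n(\cdot-z)$, then integrating against $J_r(z)\,dz$ and taking expectation, yields
\[
  \E{\iint\abs{\vdt((t_{n+1})-,x+z)-\vdt((t_{n+1})-,x-z)}J_r(z)\phi(x)\,dx\,dz}\le e^{C_\phi\norm{f}_\mathrm{Lip}\Dt}\Phi(t_n).
\]

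For the stochastic half-step I set $A(t,x,z):=w(t,x+z)-w(t,x-z)$, where $w(\cdot)=\ssde(\cdot,t_n)\vdt((t_{n+1})-,\cdot)$ solves $dw=\sigma(x,w)\,dB$, and apply It\^o's formula to a smooth even convex approximation $\eta_\varepsilon$ of $\abs{\cdot}$ (for instance $\eta_\varepsilon(a)=\sqrt{a^2+\varepsilon^2}-\varepsilon$), chosen so that $\eta''_\varepsilon(a)a^2\le\varepsilon$ and $\eta''_\varepsilon(a)\le 1/\varepsilon$. After taking expectation (the It\^o integral is a true martingale by the $L^p$-moment bounds of Corollary~\ref{cor:LpLocTimeIntPol}) and integrating against $J_r(z)\phi(x)$, the remaining drift is
\[
  \frac12\E{\iint\eta''_\varepsilon(A)\bigl(\sigma(x+z,w(x+z))-\sigma(x-z,w(x-z))\bigr)^2J_r(z)\phi(x)\,dx\,dz}.
\]
Decomposing the $\sigma$-difference as an $u$-increment plus an $x$-increment, invoking \eqref{eq:sigma-ass} and \eqref{eq:sigma-ass-xdep}, using $\abs{z}\le r$ and the $L^2$ moment bound, and applying $\eta''_\varepsilon(A)A^2\le\varepsilon$ to the $u$-piece and $\eta''_\varepsilon\le 1/\varepsilon$ to the $x$-piece, produces a per-unit-time bound of order $C\bigl(\norm{\sigma}_\mathrm{Lip}^2\varepsilon+M_\sigma^2 r^{2\kappas+1}/\varepsilon\bigr)$. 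Summation over the $N$ stochastic substeps multiplies by $T$, and the deterministic contractions then compose to give the prefactor $e^{C_\phi\norm{f}_\mathrm{Lip}t}$ in front of $\Phi(0)$ while merely inflating the accumulated stochastic error by $e^{C_\phi\norm{f}_\mathrm{Lip}T}$.

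Finally, optimizing $\varepsilon\sim r^{\kappas+1/2}$ produces an additional error of order $r^{\kappas+1/2}\le r^{\kappas}$ (for $r\le 1$), delivering the advertised constant $C_T$; when $\sigma=\sigma(u)$, hypothesis \eqref{eq:sigma-ass-xdep} is vacuous with $M_\sigma=0$, the entire second contribution disappears, and $C_T=0$. The main obstacle I anticipate is precisely this joint handling of the smoothing parameter $\varepsilon$ and the translation radius $r$: the quadratic variation term $\eta''_\varepsilon(A)\cdot(\text{$x$-increment of }\sigma)^2$ naively blows up as $\varepsilon\downarrow 0$, and the correct H\"older exponent is only recovered by postponing the choice of $\varepsilon$ until after the induction over splitting steps has been carried out, so as to balance the two competing contributions $\varepsilon$ and $r^{2\kappas+1}/\varepsilon$.
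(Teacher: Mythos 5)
Your plan for the stochastic half-step matches the paper's (It\^o on a regularized entropy, splitting the $\sigma$-increment into a $u$-piece controlled by $\eta_\varepsilon''(A)A^2\lesssim\varepsilon$ and an $x$-piece controlled by $\eta_\varepsilon''\lesssim 1/\varepsilon$ and \eqref{eq:sigma-ass-xdep}), but the proposal has a genuine gap at the interface between the two half-steps.

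Your deterministic half-step is an estimate on the quantity $\Phi(t)=\E{\iint\abs{\cdot}J_r\phi}$ (Kru\v{z}kov entropy $\abs{\cdot}$), obtained from the weighted $L^1$ contraction. Your stochastic half-step is an estimate on the regularized quantity $\Psi_\varepsilon(t)=\E{\iint\eta_\varepsilon(\cdot)J_r\phi}$. These do not compose for free. If you convert back and forth at each interface, the passage $\Psi_\varepsilon\to\Phi$ costs $\varepsilon\norm{\phi}_{L^1}$ per stochastic step, which accumulates to $N\varepsilon\norm{\phi}_{L^1}\sim T\varepsilon/\Dt$; optimizing $\varepsilon$ then produces an error of order $r^{\kappas+1/2}/\sqrt{\Dt}$, which is \emph{not} uniform in $\Dt$. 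If instead, as you suggest by ``postponing the choice of $\varepsilon$ until after the induction,'' you keep $\eta_\varepsilon$ through the whole chain and convert only once at the end, then the deterministic half-step must be a contraction-type estimate for the regularized entropy $\eta_\varepsilon$, \emph{not} the plain $L^1$ contraction. But the $L^1$ contraction is special to Kru\v{z}kov's entropy: for a regularized entropy $S_\delta$, the entropy flux $Q_\delta(u,v)$ is no longer antisymmetric, and the doubling-of-variables argument picks up an error controlled by $\norm{f''}_\infty\delta/r$ per unit time (this is the paper's $\Term^2_{\text{CL}}$ and precisely why assumption \eqref{eq:fluxDoubleDerBounded-ass} appears in the hypotheses). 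In short, you cannot both invoke the $L^1$ contraction pathwise on translated data and carry $\eta_\varepsilon$ through the induction; you have to pay on one side or the other.

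This also invalidates the proposed optimization $\varepsilon\sim r^{\kappas+1/2}$. Once the deterministic error $\norm{f''}_\infty\varepsilon/r$ is in play, the competition is between $\varepsilon/r$ (deterministic) and $r^{2\kappas+1}/\varepsilon$ (stochastic $x$-piece), which is balanced at $\varepsilon\sim r^{\kappas+1}$, yielding $r^{\kappas}$; the paper's choice $\delta=r^{\kappas+1}$ reflects exactly this. With your choice $\varepsilon=r^{\kappas+1/2}$, the deterministic contribution is $\varepsilon/r=r^{\kappas-1/2}$, which dominates $r^{\kappas}$ and even diverges as $r\downarrow 0$ when $\kappas<1/2$. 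So even granting a one-time $\eta_\varepsilon\to\abs{\cdot}$ passage, the exponent you report would not be recovered without correcting the balance.
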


\begin{remark}
In the deterministic case or whenever $\sigma=\sigma(u)$ is 
independent of the spatial location $x$, we recover the 
usual $BV$ bound. To this end, note that $C_T = 0$, apply 
the weight $\phi_\rho(x)=e^{-\rho\sqrt{1+\abs{x}^2}}$ ($\rho > 0$), 
and  then send $\rho \downarrow 0$. 
\end{remark}

Before we proceed to the proof, we fix some notation 
and make a few observations. Let us define 
$C^2$-approximations $\Set{S_\delta}_{\delta > 0}$ 
of the absolute value function by asking that
\begin{equation}\label{eq:SdeltaDef}
	S_\delta'(\sigma) = 2\int_0^\sigma J_\delta(z)\,dz,
	\qquad S_\delta(0) = 0.
\end{equation}
Then
\begin{equation}\label{eq:Sdelta-est}
	\abs{r}-\delta
	\le S_\delta(r)\le \abs{r},
	\qquad
	\abs{S_\delta''(r)}\le  \frac{2}{\delta}\norm{J}_\infty
	\mathbf{1}_{\abs{r}<\delta}.
\end{equation}
Given $S_\delta$, we define $Q_\delta$ by
\begin{equation}\label{eq:QdeltaDef}
	Q_\delta(u,v)=\int_v^u 
	S_\delta '(\xi-v)f'(\xi)\, d\xi,
	\qquad u,v\in \R.
\end{equation}
This function satisfies
\begin{equation}\label{eq:QdeltaDef-prop}
	\abs{\partial_u \left(Q_\delta(u,v)
	-Q_\delta(v,u)\right)}
	\le \norm{f^{\prime \prime}}_{L^\infty} \delta
\end{equation}
and
\begin{equation}\label{eq:QdeltaDef-bound}
	\abs{Q_\delta(u,v)} \le 
	\norm{f}_{\mathrm{Lip}} S_\delta(u-v). 
\end{equation}

Let us state two convenient 
identities. First, for $h = h(\cdot,\cdot)\in L^1_{\loc}$,
\begin{multline}\label{eq:ChVarBV}
 \frac{1}{2^d}\iint_{\R^d \times \R^d} h(x,y) 
\phi\left(\frac{x+y}{2}\right)J_r\left(\frac{x-y}{2}\right)\,dxdy \\
  = \iint_{\R^d \times \R^d} 
h(\tilde{x}+z,\tilde{x}-z)\phi(\tilde{x})J_r(z)\,d\tilde{x}dz.
\end{multline}
This follows by a change of variables: 
$(\tilde{x},z) = \left(\frac{x+y}{2},
\frac{x-y}{2}\right)$, $dy = 2^d dz$. Next,
\begin{equation}\label{eq:symmetricMollExpr}
 \frac{1}{2^d}\int_{\R^d}\phi\left(\frac{x+y}{2}\right)
J_r\left(\frac{x-y}{2}\right)\,dy 
= (\phi \star J_r)(x).
\end{equation} 

\begin{proof}[Proof of Proposition~\ref{prop:fractional-BV}]
  Given $u=u(t)=u(t,x;\omega)$, we introduce the
  quantity
  \begin{equation*}
      \mathcal{D}_{r}^u(t) := \Eb{\; \, 
        \frac{1}{2^d}\iint\limits_{\R^d\times\R^d} \abs{u(t,x)-u(t,y)}
        J_r(\tfrac{x-y}{2}) \phi(\tfrac{x+y}{2})\,dxdy \,}.
  \end{equation*}
  Actually, at first we are not going to work with this quantity but
  rather
  \begin{equation*}
      \mathcal{D}_{r,\delta}^u(t) := \Eb{\, \,
        \frac{1}{2^d}\iint\limits_{\R^d\times\R^d}
        S_\delta(u(t,x)-u(t,y)) J_r(\tfrac{x-y}{2})
        \phi(\tfrac{x+y}{2})\,dxdy\,},
  \end{equation*}
  where the regularized entropy $S_\delta$ is defined in
  \eqref{eq:SdeltaDef}.  In view of \eqref{eq:Sdelta-est} and
  \eqref{eq:symmetricMollExpr},
  \begin{equation}\label{eq:D-Drho}
    \abs{\mathcal{D}_{r}^u(t)-\mathcal{D}_{r,\delta}^u(t)} 
    \le \norm{\phi}_{L^1(\R^d)} \delta, 
    \qquad t>0.
  \end{equation}

  \medskip

  \textit{1.~Deterministic step.}  Let $v(t,x)$ be the unique entropy
  solution of \eqref{eq:CL}. We want to prove the
  following claim: There exists a constant $C_1$ depending only on $J$
  and $C_\phi$ such that for all $0 < r \le 1$,
  \begin{equation}\label{eq:claimDetStepFracBV}
    \mathcal{D}_{r,\delta}^v(t) \le 
	e^{C_\phi \norm{f}_{\mathrm{Lip}}t} \left(\mathcal{D}_{r,\delta}^v(0)
      +C_1 \norm{f''}_\infty\E{\norm{v_0}_{1,\phi}}\, t 
	\left(\frac{\delta}{r}\right)\right).
  \end{equation}

  Let $Q_\delta$ be defined in \eqref{eq:QdeltaDef}. Using
  the entropy inequalities and Kru{\v{z}}kov's method of doubling the
  variables, it follows in a standard way that for $t>0$
  \begin{equation*}
    \begin{split}
      &\frac{1}{2^d}\iint\limits_{\R^d\times \R^d}
      S_\delta(v(t,x)-v(t,y)) J_r(\tfrac{x-y}{2}) \phi(\tfrac{x+y}{2})
      \,dx\, dy \\ & \qquad\qquad -
      \frac{1}{2^d}\iint\limits_{\R^d\times\R^d}
      S_\delta(v_0(x)-v_0(y)) J_r(\tfrac{x-y}{2}) \phi(\tfrac{x+y}{2})
      \,dxdy \\ & \le \frac{1}{2^d}\int_0^t\,\,
      \iint\limits_{\R^d\times\R^d} Q_\delta(v(s,x),v(s,y)) \cdot
      \nabla \phi(\tfrac{x+y}{2}) J_r(\tfrac{x-y}{2}) \,dxdy\,ds \\
      & \qquad +\frac{1}{2^d}\int_0^t\,\,
      \iint\limits_{\R^d\times\R^d} \bigl(Q_\delta(v(s,y),v(s,x))
      -Q_\delta(v(s,x),v(s,y))\bigr) \\ &
      \qquad\qquad\qquad\qquad\qquad\qquad \cdot
      \nabla_y\Bigl(\phi(\tfrac{x+y}{2}) J_r(\tfrac{x-y}{2}) \Bigr)
      \,dxdy\,ds \\ & =: \Term_{\text{CL}}^1+\Term_{\text{CL}}^2.
    \end{split}
  \end{equation*}
  By \eqref{eq:QdeltaDef-bound},
  \begin{align*}
    \abs{\Term_{\text{CL}}^1}\le
    C_\phi\norm{f}_{\mathrm{Lip}}\frac{1}{2^d}\int_0^t\,\,
    \iint\limits_{\R^d\times\R^d} S_\delta(v(s,x)-v(s,y))
    J_r(\tfrac{x-y}{2}) \phi(\tfrac{x+y}{2}) \,dxdy\,ds.
  \end{align*}
  Consider $\Term_{\text{CL}}^2$. Thanks to \eqref{eq:QdeltaDef-prop},
  \begin{equation*}
    \abs{Q_\delta(v,u)-Q_\delta(u,v)} = \Bigl|\int_v^u \partial_\xi 
	\left(Q_\delta(\xi,v)-Q_\delta(v,\xi)\right)\,d\xi\Bigr|
    \le \norm{f''}_\infty\abs{u-v}\delta,
  \end{equation*}
  so that
  \begin{align*}
    \abs{\Term_{\text{CL}}^2} &\le \frac{\norm{f''}_\infty}{2} \,
    \delta \, \frac{1}{2^d}\int_0^t\,\, \iint\limits_{\R^d\times\R^d}
    \abs{v(s,x)-v(s,y)} \abs{\nabla J_r(\tfrac{x-y}{2})}
    \phi(\tfrac{x+y}{2}) \,dxdy\,ds \\ & \qquad +
    \frac{\norm{f''}_\infty}{2}\, \delta\, \frac{1}{2^d}\int_0^t\,\,
    \iint\limits_{\R^d\times\R^d} \abs{v(s,x)-v(s,y)}
    J_r(\tfrac{x-y}{2}) \abs{\nabla \phi(\tfrac{x+y}{2})}
    \,dxdy\, ds \\
    & =: \Term_{\text{CL}}^{2,1} 
	+ \Term_{\text{CL}}^{2,2}.
  \end{align*}
  Consider $\Term_{\text{CL}}^{2,1}$. Setting 
$\varphi_r(z) = \norm{\nabla
    J}_1^{-1}\frac{1}{r^d}
\abs{\nabla J(\frac{z}{r})}$, we write
$$
\abs{\nabla J_r\left(\frac{x-y}{2}\right)} 
= \norm{\nabla J}_1\frac{1}{r}
\varphi_r\left(\frac{x-y}{2}\right).
$$ 
By the triangle inequality and \eqref{eq:symmetricMollExpr},
  \begin{multline*}
    \frac{1}{2^d}\iint\limits_{\R^d\times\R^d}\abs{v(s,x)-v(s,y)}
	\abs{\nabla J_r(\tfrac{x-y}{2})}\phi(\tfrac{x+y}{2}) \,dxdy \\
    \le \norm{\nabla J}_1\frac{2}{r} \int_{\R^d} \abs{v(s,x)}(\phi
    \star \varphi_r)(x)\,dx = \norm{\nabla
      J}_1\frac{2}{r}\norm{v(s)}_{1,\phi \star \varphi_r}.
  \end{multline*}
Considering $\Term_{\text{CL}}^{2,2}$, with $\phi \in \mathfrak{N}$,
\begin{multline*}
    \frac{1}{2^d}\iint\limits_{\R^d\times\R^d}
	\abs{v(s,x)-v(s,y)}J_r(\tfrac{x-y}{2})
	\abs{\nabla \phi(\tfrac{x+y}{2})}\,dxdy \\
    \le 2C_\phi \int_{\R^d} \abs{v(s,x)}(\phi \star J_r)(x)\,dx =
    2C_\phi \norm{v(s)}_{1,\phi \star J_r}.
\end{multline*}
  By Lemma~\ref{lemma:ContMollWeightedNorm},
  \begin{equation*}
    \max\left\{\norm{v(s)}_{1,\phi \star \varphi_r},
	\norm{v(s)}_{1,\phi \star J_r}\right\}
\le \norm{v(s)}_{1,\phi}(1 + w_{1,\phi}(r)), 
  \end{equation*}
  where $w_{1,\phi}$ is defined in Lemma~\ref{lemma:PhiProp}. Hence,
  \begin{equation*}
    \abs{\Term^2_{\text{CL}}} \le 
\norm{f''}_\infty(1 + w_{1,\phi}(r))\left(\int_0^t\norm{v(s)}_{1,\phi}\,ds\right)
\left(\norm{\nabla J}_1\frac{1}{r} + C_\phi\right)\delta.
  \end{equation*}
In view of \eqref{eq:LpLocEstDetStep}, 
$\norm{v(s)}_{1,\phi} \le 
e^{\norm{f}_{\mathrm{Lip}}C_\phi s}\norm{v_0}_{1,\phi}$. 
Summarizing, 
  \begin{equation}\label{eq:entropy-vxvy-CL}
    \begin{split}
      &\frac{1}{2^d}\iint\limits_{\R^d\times \R^d}
      S_\delta(v(t,x)-v(t,y)) J_r(\tfrac{x-y}{2}) \phi(\tfrac{x+y}{2})
      \,dx\, dy \\ & \qquad\qquad
      -\frac{1}{2^d}\iint\limits_{\R^d\times\R^d}
      S_\delta(v_0(x)-v_0(y)) J_r(\tfrac{x-y}{2}) \phi(\tfrac{x+y}{2})
      \,dxdy \\ & \le
      \underbrace{C_\phi\norm{f}_{\mathrm{Lip}}}_{K}\int_0^t\frac{1}{2^d}\,
      \iint\limits_{\R^d\times\R^d}
      S_\delta(v(s,x)-v(s,y))J_r(\tfrac{x-y}{2})
      \phi(\tfrac{x+y}{2})\,dx\, dy\, ds \\ & \qquad + \int_0^t
      \underbrace{C_1 \norm{f''}_\infty\norm{v_0}_{1,\phi}
        e^{\norm{f}_{\mathrm{Lip}}C_\phi s}
        \Bigl(\frac{\delta}{r}\Bigr)}_{H(s)}\,ds,
    \end{split}
  \end{equation}
  where $C_1 = (1+w_{1,\phi}(1))(\norm{\nabla J}_1 + C_\phi) $. This inequality is of the form \eqref{eq:X-eq}. By Gr\"onwall's
  inequality \eqref{eq:gronwall},
  \begin{align*}
    & \iint\limits_{\R^d\times \R^d} S_\delta(v(t,x)-v(t,y))
    J_r(\tfrac{x-y}{2}) \phi(\tfrac{x+y}{2}) \,dxdy \\ & \qquad \le
    e^{C_\phi \norm{f}_{\mathrm{Lip}}t}\Bigg(\iint\limits_{\R^d\times
      \R^d} S_\delta(v_0(x)-v_0(y)) J_r(\tfrac{x-y}{2})
    \phi(\tfrac{x+y}{2}) \,dxdy \\ & \qquad \qquad
    \hphantom{XXXXXXXXXXXXX} + C_1\norm{f''}_\infty\norm{v_0}_{1,\phi} t
    \Bigl(\frac{\delta}{r}\Bigr)\Bigg) .
  \end{align*}
  This proves the claim \eqref{eq:claimDetStepFracBV} \medskip

  \textit{2.~Stochastic step.} Let $w(t)=\ssde(t,s)w(s)$. We will now
  derive an estimate for $w$ similar to \eqref{eq:entropy-vxvy-CL}: 
There exist constants
  $C_1$ and $C_2$, depending only on $J,\sigma,\phi$, such that
  \begin{equation}\label{eq:claimStochStepFracBV}
    \mathcal{D}^w_{r,\delta}(t) \le \mathcal{D}^w_{r,\delta}(s) + C_1 \frac{r^{2\kappa +1}}{\delta}\int_s^t 
\E{\norm{1 + \abs{w(\tau)}}_{2,\phi}^2}\,d\tau +  C_2(t-s)\delta,
  \end{equation}
  for all $0 \le r \le 1$. If $M_{\sigma} = 0$, then $C_1 = 0$.

  Since $w(t,x)-w(t,y)$ solves
$$
d (w(t,x)-w(t,y)) =\bigl(\sigma(x,w(t,x))-\sigma(y,w(t,y))\bigr)\, d
B(t),
$$
applying Ito's formula to $S_\delta(w(t,x)-w(t,y))$ yields
\begin{align*}
  &d S_\delta(w(t,x)-w(t,y)) \\ & \qquad =
  \frac12S_\delta^{\prime\prime}(w(t,x)-w(t,y))
  \bigl(\sigma(x,w(t,x))-\sigma(y,w(t,y))\bigr)^2\, dt, \\ &
  \qquad\qquad +S_\delta'(w(t,x)-w(t,y))
  \bigl(\sigma(x,w(t,x))-\sigma(y,w(t,y))\bigr)\, d B(t).
\end{align*}
Integrating against the test function
$\frac{1}{2^d}J_r(\tfrac{x-y}{2})\phi(\tfrac{x+y}{2})$, we arrive at
\begin{equation*}
  \begin{split}
    &\frac{1}{2^d}\iint\limits_{\R^d\times \R^d}
    S_\delta(w(t,x)-w(t,y)) J_r(\tfrac{x-y}{2}) \phi(\tfrac{x+y}{2})
    \,dxdy \\ & \qquad\qquad -
    \frac{1}{2^d}\iint\limits_{\R^d\times\R^d} S_\delta(w(s,x)-w(s,y))
    J_r(\tfrac{x-y}{2}) \phi(\tfrac{x+y}{2}) dxdy \\ & = \int_s^t
    \,\, \frac{1}{2^d}\iint\limits_{\R^d\times \R^d}
    \frac12S_\delta^{\prime\prime}(w(\tau,x)-w(\tau,y)) \\ & \qquad
    \qquad\qquad\quad \times
    \bigl(\sigma(x,w(\tau,x))-\sigma(y,w(\tau,y))\bigr)^2
    J_r(\tfrac{x-y}{2})\phi(\tfrac{x+y}{2}) \,dxdy\,d\tau \\ &
    \qquad + \int_s^t \,\, \frac{1}{2^d}\iint\limits_{\R^d\times \R^d}
    S_\delta'(w(\tau,x)-w(\tau,y))
    \bigl(\sigma(x,w(\tau,x))-\sigma(y,w(\tau,y))\bigr)\,dxdy\, d B(\tau)
    \\ & = \Term_{\text{SDE}}^1+\Term_{\text{SDE}}^2,
  \end{split}
\end{equation*}
where the $\Term_{\text{SDE}}^2$-term has zero expectation. Note that
\begin{equation*}
  \left(\sigma(x,u)-\sigma(y,v)\right)^2 \le 2\left(\sigma(x,u)-\sigma(x,v)\right)^2 
  + 2\left(\sigma(x,v)-\sigma(y,v)\right)^2,
\end{equation*}
for any $u,v \in \R$. We estimate the $\Term_{\text{SDE}}^1$-term as
follows:
\begin{equation*}
  \begin{split}
    E\Bigl[\abs{\Term_{\text{SDE}}^1}\Bigr] & \le 2E\Biggl[\int_s^t
    \,\, \frac{1}{2^d}\iint\limits_{\R^d\times \R^d}
    J_\delta(w(\tau,x)-w(\tau,y))
    \bigl(\sigma(x,w(\tau,x))-\sigma(y,w(\tau,x))\bigr)^2 \\ &
    \qquad\qquad \qquad\qquad\qquad\qquad \times J_r(\tfrac{x-y}{2})
    \phi(\tfrac{x+y}{2})\,dxdy\,d\tau \Biggr] \\ & 
    +2E\Biggl[ \int_s^t \,\, \frac{1}{2^d}\iint\limits_{\R^d\times
      \R^d} J_\delta(w(\tau,x)-w(\tau,y))
    \bigl(\sigma(y,w(\tau,x))-\sigma(y,w(\tau,y))\bigr)^2 \\ &
    \qquad\qquad \qquad\qquad\qquad\qquad \times J_r(\tfrac{x-y}{2})
    \phi(\tfrac{x+y}{2} )\,dxdy\,d\tau \Biggr]=: S_1+S_2.
  \end{split}
\end{equation*}
Regarding $S_1$, recall that $\abs{J_\delta} \le
\norm{J}_\infty/\delta$. By \eqref{eq:sigma-ass-xdep},
\begin{align*}
  \abs{S_1} & \le \norm{J}_\infty\frac{2}{\delta} E\Bigg[\int_s^t\, \,
    \frac{1}{2^d}\iint\limits_{\R^d\times\R^d}
    \abs{\sigma(x,w(\tau,x))-\sigma(y,w(\tau,x))}^2 \\
    &\hphantom{XXXXXXXXXXXXXXXXX} \times J_r(\tfrac{x-y}{2})
    \phi(\tfrac{x+y}{2}) \,dxdy\,d\tau\Bigg] \\
  & \le \norm{J}_\infty M_{\sigma}^2\frac{2}{\delta}
  E\Bigg[\int_s^t\,\, \frac{1}{2^d}\iint\limits_{\R^d\times\R^d}
    \abs{x-y}^{2\kappas + 1}(1 + \abs{w (\tau,x)})^2 \\
    &\hphantom{XXXXXXXXXXXXXXXXX}
     \times J_r(\tfrac{x-y}{2})\phi(\tfrac{x+y}{2}) \,dxdy\,d\tau\Bigg] \\
  & \le 2\norm{J}_\infty M_{\sigma}^2\, \frac{(2r)^{2\kappas +
      1}}{\delta} \int_s^t \E{\norm{1 + \abs{w(\tau)}}_{2,\phi \star
      J_r}^2}\,d\tau.
\end{align*}
By Lemma~\ref{lemma:ContMollWeightedNorm},
\begin{equation*}
  \norm{1 + \abs{w(\tau)}}_{2,\phi \star J_r}^2 \le \norm{1 +
    \abs{w(\tau)}}_{2,\phi}^2(1 + w_{1,\phi}(r)),
\end{equation*}
where $w_{1,\phi}$ is defined in Lemma~\ref{lemma:PhiProp}. It follows that
\begin{equation*}
  \abs{S_1} \le \underbrace{2^{2(\kappas + 1)}\norm{J}_\infty M_{\sigma}^2  (1 + w_{1,\phi}(1))}_{C_1}\int_s^t
  \E{\norm{1 + \abs{w(\tau)}}_{2,\phi}^2}\,d\tau\,\frac{r^{2\kappas
      + 1}}{\delta},
\end{equation*}
for all $0 < r \leq 1$. Consider $S_2$. Due to assumption \eqref{eq:sigma-ass},
\begin{equation*}
  J_\delta(w(\tau,x)-w(\tau,y))\left(\sigma(y,w(\tau,x))-\sigma(y,w(\tau,y))\right)^2
  \le \norm{\sigma}_{\mathrm{Lip}}^2\norm{J}_\infty\delta. 
\end{equation*}
Hence,
\begin{equation*}
  \abs{S_2} \le
  \underbrace{2\norm{\sigma}_{\mathrm{Lip}}^2\norm{J}_\infty\norm{\phi}_{L^1(\R^d)}}_{C_2}(t-s)\delta.
\end{equation*}
This proves \eqref{eq:claimStochStepFracBV} \medskip

\textit{3.~Inductive step.} Let $P_n$ be the
following claim: There exist constants $C_1,C_2,C_3$ depending
only on $J,\phi,\sigma$ such that for all $0 < r \le 1$,
\begin{multline}\label{eq:IndHypFracBV}
  \mathcal{D}^{u^n}_{r,\delta} \le
  e^{C_\phi\norm{f}_{\mathrm{Lip}}t_n}
  \Bigl(\mathcal{D}^{u^0}_{r,\delta} +C_3\norm{f''}_\infty
  \Bigl(\Dt\sum_{k=0}^{n-1}\E{\norm{u^{k}}_{1,\phi}}
	\Bigr)\frac{\delta}{r} \\
  +C_1\frac{r^{2\kappas +1}}{\delta}\int_0^{t_n} \E{\norm{1 +
      \abs{u_{\Dt}(t)}}_{2,\phi}^2}\,dt\, +C_2t_n\delta\Bigr).
\end{multline}
If $M_{\sigma} = 0$, then $C_1 = 0$. Note that $P_0$ is trivially
true. Assuming that $P_n$ is true, we want to verify 
$P_{n+1}$. Recall that $u^{n+1} = \ssde(t_{n+1},t_n)\scl(\Dt)u^n$. Let
$w^n = \scl(\Dt)u^n$ and note that $\ssde(t,t_n)w^n = u_\Dt(t)$ for
$t_n \le t <t_{n+1}$. As $1 \le e^{C_\phi\norm{f}_{\mathrm{Lip}}\Dt}$,
\eqref{eq:claimStochStepFracBV} yields
\begin{equation*}
  \mathcal{D}^{u^{n+1}}_{r,\delta} 
  \le \mathcal{D}^{w^n}_{r,\delta} 
  + e^{C_\phi\norm{f}_{\mathrm{Lip}}\Dt}
  \Bigl(C_1 \frac{r^{2\kappas + 1}}{\delta}\int_{t_n}^{t_{n+1}}
  \E{\norm{1 + \abs{u_\Dt(t)}}_{2,\phi}^2}\,dt +
  C_2\Dt\delta\Bigr). 
\end{equation*}
By \eqref{eq:claimDetStepFracBV},
\begin{equation*}
  \mathcal{D}_{r,\delta}^{w^n} \le 
	e^{C_\phi \norm{f}_{\mathrm{Lip}}\Dt}\left(\mathcal{D}_{r,\delta}^{u^n}
	+C_3 \norm{f''}_\infty\E{\norm{u^n}_{1,\phi}}\, 
	\Dt \left(\frac{\delta}{r} \right)\right).
\end{equation*}
Hence,
\begin{multline*}
  \mathcal{D}^{u^{n+1}}_{r,\delta} \le e^{C_\phi
    \norm{f}_{\mathrm{Lip}}\Dt} \Bigg(\mathcal{D}_{r,\delta}^{u^n}
  +C_3 \norm{f''}_\infty\E{\norm{u^n}_{1,\phi}}\, 
	\Dt \left(\frac{\delta}{r} \right) \\
  + C_1 \frac{r^{2\kappas +1}}{\delta}\int_{t_n}^{t_{n+1}} \E{\norm{1
      + \abs{u_\Dt(t)}}_{2,\phi}^2}\,dt + C_2\Dt\delta\Bigg),
\end{multline*}
and inserting the hypothesis $P_n$ yields $P_{n+1}$.

\medskip \textit{4.~Concluding the proof.} 
Consider \eqref{eq:IndHypFracBV}. By Corollary~\ref{cor:LpLocTimeIntPol}, there 
exists a constant $C$, independent of $\Dt$, such that 
\begin{equation*}
  \mathcal{D}_{r,\delta}^{u^n} 
  \le e^{C_\phi\norm{f}_{\mathrm{Lip}} t_n}\left(\mathcal{D}_{r,\delta}^{u^0}
  + C \, t_n\, \left( \frac{\delta}{r}+\delta 
    + \frac{r^{2\kappas+1}}{\delta} \right)\right).
\end{equation*}
Due to \eqref{eq:D-Drho}, this translates into
\begin{equation*}
  \mathcal{D}_{r}^{u^n} 
  \le  e^{C_\phi\norm{f}_{\mathrm{Lip}} t_n}\left(\mathcal{D}_{r}^{u^0}
  + C t_n \left( \frac{\delta}{r}+\delta 
    + \frac{r^{2\kappas +1}}{\delta}\right)+2\norm{\phi}_{L^1(\R^d)} \delta \right), 
  \qquad 0 \le n \le N.
\end{equation*}
We can argue via \eqref{eq:claimStochStepFracBV} to obtain
\begin{equation*}
 \mathcal{D}_{r}^{\udt}(t) 
  \le  e^{C_\phi\norm{f}_{\mathrm{Lip}} t}\left(\mathcal{D}_{r}^{u^0}
  + C t \left( \frac{\delta}{r}+\delta 
    + \frac{r^{2\kappas +1}}{\delta}\right)+2\norm{\phi}_{L^1(\R^d)} \delta \right), 
  \qquad t \in [0,T].
\end{equation*}
Note that the same holds true if we replace $\udt$ by $\vdt$, thanks to
\eqref{eq:claimDetStepFracBV}. Viewing $r > 0$ as fixed, we can choose
$\delta = r^{\kappas +1}$ to arrive at the bound
\begin{equation*}
  \mathcal{D}_{r}^{\udt}(t) \le e^{C_\phi\norm{f}_{\mathrm{Lip}} t}\mathcal{D}_{r}^{u^0} + C_T
  r^{\kappas}.
\end{equation*}
The result follows by \eqref{eq:ChVarBV}. 
In the case that $M_{\sigma} = 0$, we have
\begin{equation*}
  \mathcal{D}_{r}^{\udt}(t) 
  \le e^{C_\phi\norm{f}_{\mathrm{Lip}} t} \mathcal{D}_{r}^{u^0}
  + C_T  \left( \frac{\delta}{r}+\delta \right), 
  \qquad t\in (0,T),
\end{equation*}
and we may send $\delta \downarrow 0$ independently of $r$.
\end{proof}

In Proposition~\ref{prop:fractional-BV} the 
spatial regularity of $\udt, \vdt$ is characterized in terms 
of averaged $L^1$ space translates. In the $BV$ context, this is 
equivalently characterized by integration against the 
divergence of a smooth bounded function. 
Restricting to one dimension ($d = 1$) and $u \in C^1(\R)$, we have
\begin{align*}
	&\sup_{h > 0} \left\{\frac{1}{h}\int_\R \abs{u(x + h)-u(x)}\,dx\right\} 
	\\ & \quad
	= \int_\R \abs{u'(x)}\,dx 
  	 = \sup \left\{\int_{\R}u(x) \beta'(x) 
	 \, dx \;  : \; \beta \in C^\infty_c(\R), 
	 \, \norm{\beta}_\infty \leq 1\right\}.
\end{align*}
Fix $\kappa \in (0,1]$. The left-hand side has a natural 
generalization to the fractional $BV$ setting 
by considering $u \in L^1(\R)$ satisfying
\begin{equation}\label{eq:gen_left}
	\sup_{h > 0} \left\{\frac{1}{h^\kappa}
	\int_\R \abs{u(x + h)-u(x)}\,dx\right\} < \infty.
\end{equation} 
A possible generalization of the right-hand side reads
\begin{equation}\label{eq:gen_right}
	\sup \left\{\delta^{1-\kappa} \int_{\R}u(x) (J_\delta \star \beta)'(x) 
	\, dx \;  : \;  \delta > 0, \, \norm{\beta}_\infty \leq 1\right\} < \infty,
\end{equation}
where $\seq{J_\delta}_{\delta > 0}$ is a suitable family of symmetric mollifiers. 
Loosely speaking, the next lemma shows that \eqref{eq:gen_right} 
may be bounded in terms of \eqref{eq:gen_left}. The lemma plays a 
key role in obtaining the optimal $L^1$ time continuity 
estimates in Proposition~\ref{prop:time-cont}.

\begin{lemma}\label{lemma:FracBVBoundOnMoll}
Let $\rho \in C^\infty_c((0,1))$ satisfy $\int_0^1 \rho(r)\,dr = 1$ and $\rho \geq 0$. 
For $x \in \R^d$ define
\begin{displaymath}
U(x) = \frac{1}{\alpha(d)M_d }\left(1-\int_0^{\abs{x}} \rho(r)\,dr \right), 
\qquad V(x) = \frac{1}{d\alpha(d)M_{d-1}}\rho(\abs{x}),
\end{displaymath}
where $M_n = \int_0^\infty r^n\rho(r)\,dr$, $n \geq 0$ and 
$\alpha(d)$ denotes the volume of the unit ball in $\R^d$. Then $U,V$ are symmetric 
mollifiers on $\R^d$ with support in $B(0,1)$. For $\phi \in \mathfrak{N}$, 
$u \in L^1(\R^d,\phi)$, and $\delta > 0$, define
\begin{displaymath}
\mathcal{V}_\delta(u) = \iint_{\R^d \times \R^d} 
\abs{u(x+z)-u(x-z)}V_\delta(z)\phi(x)\,dzdx,
\end{displaymath}
where $V_\delta(z) = \delta^{-d}V(\delta^{-1}z)$. 
Similarly, for $\beta \in \nobreak L^\infty(\R^d)$ let
\begin{displaymath}
\mathcal{U}_\delta^i(u,\beta) = \int_{\R^d} u(x)
\partial_{x_i}(U_\delta \star \beta)(x)\phi(x)\,dx, 
\qquad 1 \leq i \leq d,
\end{displaymath}
where $U_\delta(z) = \delta^{-d}U(\delta^{-1}z)$. Then
\begin{displaymath}
\abs{\mathcal{U}_\delta^i(u,\beta)} \leq 
\frac{dM_{d-1}}{2M_d}\left(\frac{1}{\delta}\mathcal{V}_\delta(u) 
+ 2\norm{u}_{1,\phi}\frac{w_{1,\phi}(\delta)}{\delta}\right)\norm{\beta}_{L^\infty},
\end{displaymath}
for each $1 \leq i \leq d$, where $w_{1,\phi}$ is 
defined in Lemma~\ref{lemma:PhiProp}. 
 \end{lemma}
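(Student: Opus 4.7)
My plan is to start from the structural identity
\[
\partial_{x_i} U_\delta(z) = -\frac{dM_{d-1}}{M_d\,\delta}\,\frac{z_i}{\abs{z}}\,V_\delta(z),
\]
which follows from a direct chain-rule computation using the definitions of $U$ and $V$ (the derivative of $U$ produces $-\rho(\abs{\cdot})$ times the angular factor $z_i/\abs{z}$, while $V$ is built from the same radial profile $\rho$ up to the normalization constants). With this identity in hand, I would write $\partial_{x_i}(U_\delta \star \beta)(x) = \int \partial_{x_i}U_\delta(z)\,\beta(x-z)\,dz$, exploit the oddness of $z_i/\abs{z}$ together with the symmetry $V_\delta(-z) = V_\delta(z)$ to symmetrize the inner integrand as $\tfrac12[\beta(x-z) - \beta(x+z)]$, and then shift the translation off $\beta$ onto $u\phi$ via the substitutions $y=x-z$ and $y=x+z$ (combined with Fubini). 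This yields the representation
\[
\mathcal{U}_\delta^i(u,\beta) = -\frac{dM_{d-1}}{2M_d\,\delta}\iint V_\delta(z)\,\frac{z_i}{\abs{z}}\,\bigl[u(y+z)\phi(y+z) - u(y-z)\phi(y-z)\bigr]\beta(y)\,dy\,dz.
\]

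Taking absolute values and using $\abs{\beta(y)} \le \norm{\beta}_{L^\infty}$ together with $\abs{z_i}/\abs{z} \le 1$ reduces the task to estimating $\iint V_\delta(z)\,\abs{u(y+z)\phi(y+z) - u(y-z)\phi(y-z)}\,dy\,dz$. For this I would use the pointwise algebraic decomposition
\[
u(y+z)\phi(y+z) - u(y-z)\phi(y-z) = \phi(y)\bigl[u(y+z) - u(y-z)\bigr] + u(y+z)\bigl[\phi(y+z)-\phi(y)\bigr] - u(y-z)\bigl[\phi(y-z)-\phi(y)\bigr].
\]
The first summand integrated against $V_\delta(z)$ over $y$ and $z$ is exactly $\mathcal{V}_\delta(u)$. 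For the two $\phi$-increment summands, the substitutions $y \mapsto y \mp z$ transform each into $\iint V_\delta(z)\,\abs{u(y)}\,\abs{\phi(y \pm z) - \phi(y)}\,dy\,dz$; by the evenness of $V_\delta$ these two contributions coincide, producing the factor $2$ in the stated bound. Applying the pointwise estimate $\abs{\phi(y \pm z) - \phi(y)} \le \phi(y)\,w_{1,\phi}(\abs{z})$ for $\abs{z} \le \delta$ furnished by Lemma~\ref{lemma:PhiProp}, together with the fact that $V_\delta$ is supported in $B(0,\delta)$, bounds each $\phi$-increment term by $\norm{u}_{1,\phi}\,w_{1,\phi}(\delta)$. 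Assembling these pieces gives the stated inequality with the exact coefficient $\tfrac{dM_{d-1}}{2M_d}$.

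The hard part is the weight mismatch between the $\phi(y \pm z)$ arising naturally after the shifts and the centered weight $\phi(y)$ that appears in $\mathcal{V}_\delta(u)$. The algebraic decomposition above isolates this mismatch cleanly into the two $\phi$-increment terms, so what remains is verifying that the estimate supplied by Lemma~\ref{lemma:PhiProp} for weights $\phi \in \mathfrak{N}$ really does take the pointwise multiplicative form $\abs{\phi(y+z) - \phi(y)} \le \phi(y)\,w_{1,\phi}(\abs{z})$ with a constant that is compatible with the claimed factor $2$, so that the error term emerges in precisely the shape $2\norm{u}_{1,\phi}\,w_{1,\phi}(\delta)/\delta$ required by the lemma.
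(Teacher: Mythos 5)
Your proposal is correct and follows essentially the same route as the paper's proof: the explicit identity $\partial_{x_i}U_\delta = -\tfrac{dM_{d-1}}{M_d\delta}\,\tfrac{z_i}{\abs{z}}\,V_\delta$, the odd-even symmetrization giving the factor $\tfrac12[\beta(x-z)-\beta(x+z)]$, the three-term pointwise decomposition separating the $\mathcal{V}_\delta(u)$ contribution from the two $\phi$-increment errors, and the bound of the latter via Lemma~\ref{lemma:PhiProp}. The only cosmetic differences are that you carry the (correct) angular factor $z_i/\abs{z}$ rather than the paper's shorthand $\sgn{x_i}$, and you handle the $\phi$-increment terms by first shifting and then applying the centered form of Lemma~\ref{lemma:PhiProp}, where the paper applies the shifted form and then Young's convolution inequality — both yield the bound $2\norm{u}_{1,\phi}w_{1,\phi}(\delta)$.
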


 \begin{remark}\label{remark:spatToBV}
We note that Lemma \ref{lemma:FracBVBoundOnMoll} covers 
the $BV$ case. If there is a constant $C \geq 0$ 
such that $\mathcal{V}_\delta(u) \leq C \delta$ 
(the $BV$ case), then
\begin{displaymath}
\int_{\R^d} u(\nabla \cdot \beta)\phi \,dx = \lim_{\delta \downarrow 0} 
\sum_{i = 1}^d \mathcal{U}_\delta^i(u,\beta^i) 
\leq \frac{d^2M_{d-1}}{2M_d}\left(C + 2C_\phi\norm{u}_{1,\phi}\right),
\end{displaymath}
for any $\beta = (\beta^1, \dots,\beta^d) \in C^1_c(\R^d;\R^d)$ 
satisfying $\norm{\beta}_\infty \leq 1$. It follows that 
\begin{align*}
	\int_{\R^d} \abs{\nabla u} \phi\,dx &\leq 
	\sup_{\abs{\beta} \leq 1} \int_{\R^d} (\nabla u \cdot \beta)\phi \,dx \\
	&= \sup_{\abs{\beta} \leq 1} \int_{\R^d} u(\nabla \cdot \beta)\phi + u(\beta \cdot \nabla \phi)\,dx \\
	& \leq \frac{d^2M_{d-1}}{2M_d}\left(C + 2C_\phi\norm{u}_{1,\phi}\right) 
	+ C_\phi \norm{u}_{1,\phi},
\end{align*}
and so $\abs{\nabla u}$ is a finite measure with respect to $\phi\, dx$.
\end{remark}

\begin{proof}
Let us first show that $U$ is a symmetric mollifier. It is clearly symmetric, furthermore 
it is smooth since $\seq{0} \notin \mathrm{cl}(\mathrm{supp}(\rho))$. 
Change to polar coordinates and integrate by parts to obtain
\begin{align*}
\int_{\R^d} \left(1-\int_0^{\abs{x}} \rho(\sigma)d\sigma\right)\,dx 
&= \alpha(d)\int_0^\infty dr^{d-1}\left(1-\int_0^r \rho(\sigma)d\sigma\right)\,dr \\
&= \alpha(d)\int_0^\infty r^d\rho(r)\,dr = \alpha(d)M_d. 
\end{align*}
Similarly for $V$,   
\begin{displaymath}
\int_{\R^d}\rho(\abs{x})\,dx 
= d\alpha(d)\int_0^\infty r^{d-1}\rho(r)dr 
= d \alpha(d)M_{d-1}.
\end{displaymath}
Note that 
\begin{displaymath}
\mathcal{U}_\delta^i(u,\beta) = \iint_{\R^d \times \R^d} 
u(x)\partial_{x_i}U_\delta(x-y)\beta(y)\phi(x)\,dydx.
\end{displaymath}
Next, we differentiate to obtain
\begin{displaymath}
\partial_{x_i}U_\delta(x) = -\frac{1}{\alpha(d)M_d}\frac{1}{\delta^d}
\rho\left(\frac{\abs{x}}{\delta}\right) \frac{1}{\delta}\sgn{x_i} = 
-\frac{dM_{d-1}}{M_d}V_\delta(x)\frac{1}{\delta}\sgn{x_i}.
\end{displaymath}
Hence
\begin{displaymath}
\mathcal{U}_\delta^i(u,\beta) = -\frac{dM_{d-1}}{M_d}
\frac{1}{\delta}\iint_{\R^d \times \R^d} u(x)V_\delta(x-y)\sgn{x_i-y_i}\beta(y)\,dydx.
\end{displaymath}
This integral may be reformulated according to
\begin{align*}
&\iint_{\R^d \times \R^d}u(x)V_\delta(x-y)\sgn{x_i-y_i}\beta(y)\phi(x)\,dydx \\
&= \frac{1}{2}\iint_{\R^d \times \R^d}u(x)V_\delta(x-y)\sgn{x_i-y_i}\beta(y)\phi(x)\,dydx \\
& \quad -\frac{1}{2}\iint_{\R^d \times \R^d}u(x)V_\delta(y-x)\sgn{y_i-x_i}\beta(y)\phi(x)\,dydx \\
&= \frac{1}{2}\iint_{\R^d \times \R^d}(u(y-z)\phi(y-z)-u(y+z)\phi(y+z))V_\delta(z)\sgn{z_i}\beta(y)\,dzdy,
\end{align*}
where we made the substitution $x = y-z$ and $x = y + z$ respectively.
Since
\begin{multline*}
u(y-z)\phi(y-z)-u(y+z)\phi(y+z) = (u(y-z)-u(y+z))\phi(y) \\
				  + u(y-z)(\phi(y-z)-\phi(y))
				  -u(y+z)(\phi(y+z)-\phi(y)),
\end{multline*}
it follows that
\begin{align*}
\mathcal{U}_\delta^i(u,\beta) &= \frac{dM_{d-1}}{2M_d}
\frac{1}{\delta}\iint_{\R^d \times \R^d}(u(y+z)-u(y-z))V_\delta(z)\sgn{z_i}\beta(y)\phi(y)\,dzdy \\
	       & \quad+\frac{dM_{d-1}}{2M_d}\frac{1}{\delta}\iint_{\R^d \times \R^d}
	       u(y+z)(\phi(y+z)-\phi(y))V_\delta(z)\sgn{z_i}\beta(y)\,dzdy \\
	       & \quad+\frac{dM_{d-1}}{2M_d}\frac{1}{\delta}\iint_{\R^d \times \R^d}
	       u(y-z)(\phi(y)-\phi(y-z))V_\delta(z)\sgn{z_i}\beta(y)\,dzdy \\
	       &=:\Zerm_\delta^1 + \Zerm_\delta^2 + \Zerm_\delta^3.
\end{align*}
Clearly
\begin{displaymath}
\abs{\Zerm_\delta^1} \leq \frac{dM_{d-1}}{2M_d}\frac{1}{\delta}\underbrace{\iint_{\R^d \times \R^d} 
\abs{u(y+z)-u(y-z)}V_\delta(z)\phi(y)\,dzdy}_{\mathcal{V}_\delta(u)} \norm{\beta}_{L^\infty}.
\end{displaymath}
Consider $\Zerm_\delta^2$, the term $\Zerm_\delta^3$ is treated similarly. 
By Lemma~\ref{lemma:PhiProp} 
\begin{displaymath}
\abs{\phi(y + z)-\phi(y)} \leq w_{1,\phi}(\abs{z})\phi(y+z).
\end{displaymath}
Hence, by Young's inequality for convolutions,
\begin{displaymath}
\abs{\Zerm_\delta^2} \leq \frac{dM_{d-1}}{2M_d}
\frac{w_{1,\phi}(\delta)}{\delta}\int_{\R^d}(\abs{u\phi} \star 
V_\delta)(y)\,dy\norm{\beta}_{L^\infty} \leq 
\frac{dM_{d-1}}{2M_d}\frac{w_{1,\phi}(\delta)}{\delta} \norm{u}_{1,\phi}.
\end{displaymath}
This concludes the proof of the lemma.
\end{proof}

Next, we consider the time continuity of the splitting 
approximations. Recall that the interpolants
$\udt,\vdt$ are discontinuous at $t_n = n\Dt$. Hence, the result must 
somehow quantify the size of the jumps as $\Dt \downarrow 0$.
The idea of the proof is to ``transfer \textit{\`{a} la} Kru{\v{z}}kov"  
spatial regularity to temporal continuity \cite{Kruzkov:1969th,Kruzkov:1970kx}. 
Given a bounded variation bound, or some 
spatial $L^1$ modulus of continuity, this approach 
has been applied to miscellaneous splitting methods 
for deterministic problems, cf.~\cite{Holden:2010fk} (and references therein). 
At variance with \cite{Holden:2010fk}, we quantify spatial 
regularity differently, namely in terms of averaged (weighted) $L^1$ translates. 
Combined with Lemma \ref{lemma:FracBVBoundOnMoll}, we deduce 
$L^1$ time continuity estimates that recover
the optimal estimates in the $BV_x$ case ($\kappa=1$).

\begin{proposition}[$L^1$ time continuity]\label{prop:time-cont}
  Assume that \eqref{eq:fluxLip-ass}, \eqref{eq:fluxDoubleDerBounded-ass},
  \eqref{eq:sigma-ass}, and \eqref{eq:sigma-ass-xdep} 
hold. Fix $\phi \in \mathfrak{N}$, and let $u^0 \in
  L^2(\Omega,\fcal_0,P;L^2(\R^d,\phi))$ satisfy
  \begin{equation}\label{eq:AssumptionFracBVu0}
    \Eb{\;\,\iint\limits_{\R^d\times \R^d}
	\abs{u^0(x+z)-u^0(x-z))}
      J_r(z)\phi(x) \,dx\, dz} = \ocal(r^\kappao), 
  \end{equation}
  for any symmetric mollifier $J$ and some $0 < \kappao \leq 1$. Set 
  \begin{displaymath}
   \kappa := 
		\begin{cases}
         \min\seq{\kappao, \kappas}
			 &\mbox{ if $\sigma = \sigma(x,u)$,} \\
	      \kappao &\mbox{ if $\sigma = \sigma(u)$}.
     \end{cases}
  \end{displaymath}
Let $\udt$ and $\vdt$ be defined in 
\eqref{eq:udt} and \eqref{eq:vdt}, respectively. 
Then:
\begin{itemize}
  \item[(i)] Suppose $0 < \tau_1 < \tau_2 \le T$ satisfy $\tau_1\in
    (t_k,t_{k+1}]$ and $\tau_2\in (t_l,t_{l+1}]$. Then there exists a
    finite constant $C_{T,\phi}$, independent of $\Dt$, such that
    \begin{equation*}
      \E{\int_{\R^d}\abs{\udt(\tau_2,x)-\udt(\tau_1,x)}\phi(x)\,dx}
      \le C_{T,\phi}\Bigl(\abs{(l-k)\Dt}^\kappa
      + \sqrt{\tau_2-\tau_1}\Bigr). 
    \end{equation*}
  \item[(ii)] Suppose $0 \le \tau_1 \le \tau_2 < T$ satisfy
    $\tau_1\in [t_k,t_{k+1})$ and $\tau_2\in [t_l,t_{l+1})$. Then
    there exists a finite constant $C_{T,\phi}$, independent of $\Dt$,
    such that
    \begin{equation*}
      \E{\int_{\R^d}\abs{\vdt(\tau_2,x)-\vdt(\tau_1,x)}\phi(x)\,dx}
      \le C_{T,\phi}\Bigl(\sqrt{(l-k)\Dt} +
      \abs{\tau_2-\tau_1}^\kappa\Bigr).
    \end{equation*}
  \end{itemize}
\end{proposition}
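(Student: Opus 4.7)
The strategy is to transfer the uniform spatial $\kappa$-fractional regularity of the splitting iterates (Proposition~\ref{prop:fractional-BV}) to temporal $L^1$-continuity \`a la Kruzhkov, combined with It\^o-type martingale bounds for the SDE contributions. The critical point---obtaining the sharp rates $((l-k)\Dt)^\kappa$ in (i) and $(\tau_2-\tau_1)^\kappa$ in (ii), rather than the naive per-jump sum $(l-k)\Dt^\kappa$---is to compare the interpolants to \emph{pure conservation-law trajectories} and apply the CL time-continuity estimate once globally.

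For part (i), using the representation $\udt(\tau)=\scl(\Dt)u^n+\int_{t_n}^{\tau}\sigma(\cdot,\udt)\,dB$ on each $(t_n,t_{n+1}]$, I decompose
\[
\udt(\tau_2)-\udt(\tau_1) = \bigl[\scl(\Dt)u^l-\scl(\Dt)u^k\bigr] + \Xi,
\]
with $\Xi$ the sum of the two boundary stochastic integrals; by It\^o's isometry and Corollary~\ref{cor:LpLocTimeIntPol}, $\E\norm{\Xi}_{1,\phi}\leq C\sqrt{\tau_2-\tau_1}$. The $L^1(\phi)$-contraction of $\scl(\Dt)$ reduces the CL drift to bounding $\E\norm{u^l-u^k}_{1,\phi}$. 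Setting $D_n:=u^{k+n}-\scl(n\Dt)u^k$ (the deviation of the splitting iterates from the pure-CL trajectory starting at $u^k$), the semigroup property yields the recursion
\[
D_{n+1}=\scl(\Dt)D_n+M_{n+1},\qquad M_{n+1}:=\bigl[\ssde(t_{k+n+1},t_{k+n})-\mathrm{Id}\bigr]\scl(\Dt)u^{k+n},
\]
where $M_{n+1}$ is a conditional martingale difference of $L^2(\phi)$-size $O(\sqrt{\Dt})$. Iterating, using the $L^1(\phi)$-contraction of $\scl(\Dt)$ and the $L^2(\phi)$-orthogonality of the $M_n$, yields $\E\norm{D_{l-k}}_{1,\phi}\leq C\sqrt{(l-k)\Dt}$. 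The pure-CL contribution $\norm{\scl((l-k)\Dt)u^k-u^k}_{1,\phi}$ is then estimated by the Kruzhkov temporal transfer: test against $\beta\phi$ with $\norm{\beta}_\infty\leq 1$, replace $\beta$ by $U_\delta\star\beta$, apply the weak CL formulation
$\int(\scl(h)u^k-u^k)\psi\,dx = \int_0^h\int f(\scl(s)u^k)\cdot\nabla\psi\,dx\,ds$
with $\psi=(U_\delta\star\beta)\phi$, invoke Lemma~\ref{lemma:FracBVBoundOnMoll} together with the uniform spatial modulus $\mathcal{V}_\delta(u^k)\leq C\delta^\kappa$ from Proposition~\ref{prop:fractional-BV}, and optimize $\delta=h:=(l-k)\Dt$ to obtain $Ch^\kappa$.

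Part (ii) for $\vdt$ is symmetric, with CL and SDE interchanged. Compare $\vdt$ to the pure CL continuation $\tilde v(t):=\scl(t-\tau_1)\vdt(\tau_1)$. The estimate $\norm{\tilde v(\tau_2)-\vdt(\tau_1)}_{1,\phi}\leq C(\tau_2-\tau_1)^\kappa$ is the Kruzhkov CL transfer applied to $\vdt(\tau_1)$ (which has uniform $\kappa$-fractional spatial modulus by Proposition~\ref{prop:fractional-BV}), and the discrepancy $\vdt(\tau_2)-\tilde v(\tau_2)$, driven by the $l-k$ SDE jumps of $\vdt$ at $t_{k+1},\dots,t_l$, contributes $O(\sqrt{(l-k)\Dt})$ by a semigroup-martingale argument entirely analogous to the one in (i). The main obstacle throughout is exactly this sharp rate: it requires the reference-plus-discrepancy decomposition and a single global application of the Kruzhkov transfer, in lieu of summing over the $(l-k)$ individual CL contributions.
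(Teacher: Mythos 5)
The decomposition you propose for (i) differs from the paper's in a way that introduces two genuine gaps. The paper's weak estimate groups the stochastic contributions into a \emph{single} It\^{o} integral $\int_{\tau_1}^{\tau_2}\sigma(\cdot,\udt)\,dB$ (obtained by telescoping $\udt$ across all the intermediate $t_n$'s and noting that the CL jumps $\scl(\Dt)u^n-u^n$ separate out), which is what produces the $\sqrt{\tau_2-\tau_1}$ term. Your $\Xi=\int_{t_l}^{\tau_2}\sigma\,dB-\int_{t_k}^{\tau_1}\sigma\,dB$ is \emph{not} bounded by $C\sqrt{\tau_2-\tau_1}$: by It\^{o} isometry it is only $O\bigl(\sqrt{\tau_2-t_l}+\sqrt{\tau_1-t_k}\bigr)=O(\sqrt{\Dt})$, and this can exceed both $\sqrt{\tau_2-\tau_1}$ and $((l-k)\Dt)^\kappa$ when $\kappa>1/2$ and, say, $l=k+1$ with $\tau_1$ close to $t_{k+1}$. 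The remaining stochastic content is hidden in $\scl(\Dt)u^l-\scl(\Dt)u^k$ (since $u^{n+1}=\scl(\Dt)u^n+\int_{t_n}^{t_{n+1}}\sigma\,dB$), so your reorganization has not actually consolidated the martingale contributions.

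The second and more serious gap is the bound $\E\norm{D_{l-k}}_{1,\phi}\le C\sqrt{(l-k)\Dt}$. The recursion you write, $D_{n+1}=\scl(\Dt)D_n+M_{n+1}$, is not an identity because $\scl(\Dt)$ is nonlinear; at best you get, via $L^1(\phi)$-contraction of $\scl(\Dt)$, the inequality $\E\norm{D_{n+1}}_{1,\phi}\le e^{C\Dt}\E\norm{D_n}_{1,\phi}+\E\norm{M_{n+1}}_{1,\phi}$. Iterating only yields $\E\norm{D_{l-k}}_{1,\phi}\lesssim (l-k)\sqrt{\Dt}$, which diverges as $\Dt\downarrow 0$ with $(l-k)\Dt$ fixed and is therefore useless. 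Getting the square-root gain from the ``$L^2(\phi)$-orthogonality of the $M_n$'' would require $\scl(\Dt)$ to contract in $L^2$ so that the martingale increments could be summed Pythagorean-style, but the entropy semigroup of a scalar conservation law contracts differences in $L^1$ only, not in $L^2$, and the nonlinearity destroys the algebraic martingale structure needed for the cross terms to vanish. This is precisely why the paper does \emph{not} compare to a pure CL trajectory; your premise that the paper's ``naive per-jump sum'' yields $(l-k)\Dt^\kappa$ is also mistaken: in the paper's weak estimate each CL jump, tested against $\beta_\delta\phi$, contributes $O(\Dt\,\delta^{\kappa-1})$ rather than $O(\Dt^\kappa)$, the sum is $(l-k)\Dt\,\delta^{\kappa-1}$, and the $\delta=(l-k)\Dt$ optimization in the strong step gives $((l-k)\Dt)^\kappa$ without ever needing a discrepancy bound.
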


\begin{proof}[Proof of Proposition~\ref{prop:time-cont}.]
We shall first quantify weak continuity in the mean 
of $t\mapsto \udt(t)$, $t\mapsto \vdt(t)$, and then turn this into 
fractional $L^1$ time continuity in the mean.
The reason for first exhibiting a weak estimate is that 
the splitting steps do not produce functions that are 
Lipschitz continuous in time, thereby preventing a 
direct ``inductive argument", see \cite{Kruzkov:1969th}.

\medskip
\textit{1. Weak estimate.} Let $t_n = n \Dt$. 
Suppose $0 < \tau_1 \le \tau_2 \le T$ satisfies 
$\tau_1\in (t_k,t_{k+1}]$ and $\tau_2\in (t_l,t_{l+1}]$. 
Suppose $\beta$ belongs to $L^\infty(\Omega \times \R^d,\fcal \otimes \Borel{\R^d},dP \otimes dx)$ 
and let $\beta_\delta = \beta \star U_\delta$, where $U_\delta$ is 
defined in Lemma~\ref{lemma:FracBVBoundOnMoll}. We claim 
that there is a constant $C > 0$, independent of $\Dt$, such that 
\begin{equation}\label{eq:weak-time}
	\begin{split}
		& \E{\int_{\R^d} 
		\Bigl(\udt(\tau_2,x)-\udt(\tau_1,x)\Bigr)
		(\beta_\delta\phi)(x)\, dx}
		\\ & \hphantom{XXXXXXXXXXXX} \le 
		C\left(\delta^{\kappa-1}(l-k)\Dt
		+ \sqrt{\tau_2-\tau_1}\right)\norm{\beta}_{L^\infty}.
	\end{split}
\end{equation}
Consider the case $l \geq k+1$. We continue as follows:
\begin{align*}
	\Term &= \E{\int_{\R^d}   \Bigl(\udt(\tau_2,x)-\udt(\tau_1,x)\Bigr)(\beta_\delta\phi)(x)\, dx}
	\\ & 
	= \E{\int_{\R^d}   \Bigl(\udt(\tau_2,x)-\udt((t_l)+,x)\Bigr)(\beta_\delta\phi)(x)\, dx} 
	\\ & \qquad
	+\E{\int_{\R^d}   \Bigl(\udt(t_{k+1},x) - \udt(\tau_1,x)\Bigr)(\beta_\delta\phi)(x)\, dx}
	\\ & \qquad
	+ \E{\int_{\R^d}   \Bigl(\udt((t_l)+,x)-\udt(t_{k+1},x)\Bigr)(\beta_\delta\phi)(x)\, dx}
	\\ & \quad =: \Term_1+\Term_2+\Term_3.
\end{align*}
Recall that $\udt((t_n)+) = \vdt((t_{n+1})-) = \scl(\Dt)u^n$. Regarding the last term,
\begin{multline*}
 \udt((t_l)+,x)-\udt(t_{k+1},x) = 
\vdt((t_{l+1})-,x)-\vdt(t_l,x) \\
			  + \sum_{n = k+1}^{l-1} \udt(t_{n+1},x)-\udt(t_{n},x),
\end{multline*}
where the sum is empty for the case $l = k+1$. Furthermore, we note that 
\begin{multline*}
 \udt(t_{n+1},x)-\udt(t_{n},x) = (\udt(t_{n+1},x)-\udt((t_n)+,x)) \\
			+ (\vdt((t_{n+1})-,x)-\vdt(t_n,x)).
\end{multline*}
This yields
\begin{align*}
	\Term_3&=\E{\sum_{n={k+1}}^{l-1} 
	\int_{\R^d}  
	\Bigl(\udt(t_{n+1},x)-\udt((t_n)+,x)\Bigr)(\beta_\delta\phi)(x)\,dx}
	\\ & \quad  
	+ \E{\sum_{n={k+1}}^{l}\int_{\R^d} 
	\Bigl(\vdt((t_{n+1})-,x)-\vdt(t_n,x)\Bigr) (\beta_\delta\phi)(x)\,dx} 
	\\ &
	=\E{\int_{\R^d}\Bigl(\int_{t_{k+1}}^{t_l}
	\sigma(x,\udt(t,x))\,dB(t)\Bigr)(\beta_\delta\phi)(x)\,dx} \\
	& \quad
	+\E{\sum_{n=k+1}^{l} \int_{\R^d}  
	\Bigl(\scl(\Dt)u^n(x)-u^n(x)\Bigr)(\beta_\delta\phi)(x)\,dx}.
\end{align*}
It follows that $\Term = \Term^{\text{CL}} + \Term^{\text{SDE}}$, where
\begin{align*}
	\Term^{\text{CL}} &:=  
	E\left[\sum_{n=k+1}^{l} \int_{\R^d}  
	\Bigl(\scl(\Dt)u^n(x)-u^n(x)\Bigr)(\beta_\delta\phi)(x)\,dx\right],
	\\ \Term^{\text{SDE}} &:= E\left[\int_{\R^d}\int_{\tau_1}^{\tau_2}
	\sigma(x,\udt(t,x))\,dB(t)(\beta_\delta\phi)(x)\,dx\right].
\end{align*}
Note that this holds true for $k = l$ as $\Term^{\text{CL}} = 0$ in 
this case. As $\vdt(t,x)$ is a weak solution of the conservation 
law \eqref{eq:CL} on $[t_n,t_{n+1})$ 
\begin{equation*}
	\begin{split}
		&\abs{\int_{\R^d} 
		\bigl(\vdt((t_{n+1})-,x)-\vdt(t_n,x)\bigr)(\beta_\delta\phi)(x)\,dx} \\
		&\hphantom{XXXXXXXXXXX}
		\le \abs{\int_{t_n}^{t_{n+1}}\int_{\R^d} f(\vdt(r,x)) 
		\cdot \nabla (\beta_\delta\phi)(x) \,dxdr} \\
		&\hphantom{XXXXXXXXXXX}
		\le \abs{\int_{t_n}^{t_{n+1}}\int_{\R^d} f(\vdt(r,x)) 
		\cdot \nabla\beta_\delta(x)\,\phi(x) \,dxdr} \\
		&\hphantom{XXXXXXXXXXX}
		\quad+ \abs{\int_{t_n}^{t_{n+1}}\int_{\R^d} f(\vdt(r,x)) 
		\cdot (\beta_\delta(x)\nabla\phi(x)) \,dxdr} \\
		&\hphantom{XXXXXXXXXXX}
		=:\Zerm_\delta^1 + \Zerm_\delta^2.
	\end{split}
\end{equation*}
By Proposition~\ref{prop:fractional-BV}, there exists a constant $C > 0$ such that 
\begin{displaymath}
 \E{\iint_{\R^d \times \R^d} \abs{\vdt(r,x+z)-\vdt(r,x-z)}
 V_\delta(z)\phi(x)\,dzdx} \leq C\delta^\kappa.
\end{displaymath}
Consequently, taking expectations 
in Lemma~\ref{lemma:FracBVBoundOnMoll} yields 
\begin{multline*}
  \E{\Zerm_\delta^1} \leq \frac{d^2M_{d-1}}{2M_d}
  \norm{f}_\mathrm{Lip}\bigg(\Dt C\delta^{\kappa-1} \\
  + 2\E{\int_{t_n}^{t_{n+1}}\norm{\vdt(r)}_{1,\phi}\,dr}
  \delta^{-1}w_{1,\phi}(2\delta)\bigg)\norm{\beta}_{L^\infty}.
 \end{multline*}
As $\phi \in \mathfrak{N}$,
\begin{equation*}
 \Zerm_\delta^2 \le \norm{f}_{\mathrm{Lip}}C_\phi\E{\int_{t_{k+1}}^{t_{l+1}} 
\norm{\vdt(t)}_{1,\phi}\,dt}\norm{\beta}_{L^\infty}.
\end{equation*}
Summarizing, there exists a constant $C$ such that
\begin{equation*}
 \abs{\Term^{\text{CL}}} \le C\delta^{\kappa-1}(l-k)\Dt\norm{\beta}_{L^\infty},
\end{equation*}
for all $0 < \delta \le 1$.

By \eqref{eq:HolderFFMSpace}, Jensen's 
inequality, and the It\^o isometry,
\begin{align*}
 \abs{\Term^{\text{SDE}}}
& \le \norm{\beta}_{L^\infty}
\int_{\R^d}E\Bigg[\abs{\int_{\tau_1}^{\tau_2}
\sigma(x,\udt(t,x))\,dB(t)}\Bigg]\phi(x)\,dx \\
& \le \norm{\beta}_{L^\infty}\norm{\phi}_{L^1(\R^d)}^{1/2}\left(\int_{\R^d}
E\Bigg[\abs{\int_{\tau_1}^{\tau_2}\sigma(x,\udt(t,x))\,dB(t)}\Bigg]^2\phi(x)\,dx\right)^{1/2} \\
& \le \norm{\beta}_{L^\infty}\norm{\phi}_{L^1(\R^d)}^{1/2}\left(\int_{\R^d}
E\Bigg[\int_{\tau_1}^{\tau_2}\sigma^2(x,\udt(t,x))\,dt\Bigg]
\phi(x)\,dx\right)^{1/2} \\
& = \norm{\beta}_{L^\infty}\norm{\phi}_{L^1(\R^d)}^{1/2}
\left(\int_{\tau_1}^{\tau_2}\E{\norm{\sigma(\cdot,\udt(t,\cdot))}_{2,\phi}^2}\,dt\right)^{1/2} \\
& \leq C\norm{\beta}_{L^\infty}\norm{\phi}_{L^1(\R^d)}^{1/2}\sqrt{\tau_2-\tau_1},
\end{align*}
since, in view of \eqref{eq:sigma-ass} and Corollary~\ref{cor:LpLocTimeIntPol}, $\E{\norm{\sigma(\cdot,\udt(t,\cdot))}^2_{2,\phi}}^{1/2} \leq C$ for some constant $C$ independent of $t \in [0,T]$. Summarizing, the 
above estimates imply the existence of a constant $C$, independent of $\Dt,\delta$ and $\beta$, such that
\begin{align*}
	& \abs{\Term} 
	\le C\left(\delta^{\kappa-1}(k-l)\Dt
	+ \sqrt{\tau_2-\tau_1}\right)\norm{\beta}_{L^\infty},
\end{align*}
which yields \eqref{eq:weak-time}. 

Let us consider $\vdt$. Suppose 
$0 \le \tau_1 \le \tau_2 < T$, with 
$\tau_1 \in [t_k,t_{k+1})$, $\tau_2 \in [t_l,t_{l+1})$.
We claim there is a constant $C > 0$, independent 
of $\Dt,\delta$ and $\beta$, such that 
\begin{equation}\label{eq:weak-timevDt}
	\begin{split}
		& \E{\int_{\R^d} 
		\Bigl(\vdt(\tau_2,x)-\vdt(\tau_1,x)\Bigr)
		(\beta_\delta\phi)(x)\, dx}
		\\ & \hphantom{XXXXXXXXXXX} \le 
		C\left(\delta^{\kappa-1}\abs{\tau_2 -\tau_1}
		+ \sqrt{(l-k)\Dt}\right)\norm{\beta}_{L^\infty}.
	\end{split}
\end{equation}
To prove this claim, note that
\begin{align*}
 \vdt(\tau_2,x)-\vdt(\tau_1,x) &= \vdt(\tau_2,x)-\vdt(t_l,x) \\ 
& \quad + \sum_{n=k+1}^{l} \vdt(t_n,x)-\vdt((t_n)-,x) \\
& \quad + \sum_{n=k+1}^{l-1} \vdt((t_{n+1})-,x)-\vdt(t_n,x) \\
& \quad +\vdt((t_{k+1})-,x)-\vdt(\tau_1,x),
\end{align*}
and so
\begin{equation*}
 \E{\int_{\R^d}  \Bigl(\vdt(\tau_2,x)-\vdt(\tau_1,x)\Bigr)(\beta_\delta\phi)(x)\, dx} 
 = \Term^{\text{CL}} + \Term^{\text{SDE}},
\end{equation*}
where 
\begin{align*}
 \Term^{\text{CL}} &:=  \E{\int_{\R^d}  \Bigl(\scl(\tau_2-t_l)u^l(x)-u^l(x)\Bigr)(\beta_\delta\phi)(x)\, dx} \\
 &\qquad+\sum_{n=k+1}^{l-1}\E{\int_{\R^d}  \Bigl(\scl(\Dt)u^n(x)-u^n(x)\Bigr)(\beta_\delta\phi)(x)\, dx} \\
&\qquad + \E{\int_{\R^d} \Bigl(\scl(\Dt)u^k-\scl(\tau_1-t_k)u^k\Bigr)(\beta_\delta\phi)(x)\, dx}, \\
 \Term^{\text{SDE}} &:= \sum_{n=k+1}^{l}\E{\int_{\R^d}  \Bigl(\int_{t_{n-1}}^{t_n}
\sigma(x,\udt(t,x))\,dB(t)\Bigr)(\beta_\delta\phi)(x)\, dx} \\
&=\E{\int_{\R^d} \Bigl(\int_{t_k}^{t_l}\sigma(x,\udt(t,x))\,dB(t)\Bigr)(\beta_\delta\phi)(x)\, dx}.
\end{align*}
Combining the above estimates 
yields \eqref{eq:weak-timevDt}.
 
\medskip
\textit{2. Strong estimate.} Let $d(x)= \udt(\tau_2)-\udt(\tau_1)$, 
$\beta(x) = \sgn{d(x)}$. By the triangle inequality,
\begin{align*}
& \E{\,\, \int_{\R^d} 
\abs{\udt(\tau_2,x)-\udt(\tau_1,x)}\phi(x)\, dx}
\\ & \quad
	\le \Bigl|E\Bigl[\,  
\int_{\R^d} \beta_\delta(x)d(x)
\phi(x)\, dx\Bigr]\Bigr|
	+ E\Bigl[\,\,  \int_{\R^d} \abs{\abs{d(x)}-\beta_\delta(x)d(x)}\phi(x)\, dx\Bigr] 
	\\ & \quad
	=: \Term_1 + \Term_2.
\end{align*}
By \eqref{eq:weak-time},
\begin{equation*}
  \Term_1 = \ocal \left(\delta^{\kappa-1}(l-k)\Dt + \sqrt{\tau_2-\tau_1}\right).
\end{equation*}
Consider $\Term_2$. Following, e.g.~\cite[Lemma 1]{Kruzkov:1970kx},
\begin{align*}
  \abs{\abs{d(x)}-\beta_\delta(x)d(x)}
  &\le \int_{\R^d} \abs{\abs{d(x)}-d(x)\sgn{d(y)}}V_{\delta}(x-y)\,dy \\
  &\le 2\int_{\R^d} \abs{d(x)-d(y)}V_{\delta}(x-y)\,dy. 
\end{align*}
Upon adding and subtracting identical terms 
and changing variables 
$2\tilde x = x+y$, $2z = x-y$, 
it follows (after relabeling $\tilde x$ by $x$)
\begin{align*}
  \int_{\R^d}\abs{\abs{d(x)}
	-\beta_\delta(x)d(x)}\phi(x)\,dx 
  &\le 2\iint_{\R^d \times \R^d}
  \abs{d(x+z)-d(x-z)}\\
  &\hphantom{le 2\iint_{\R^d \times \R^d}}\quad
  \times V_{\delta/2}(z)\abs{\phi(x+z)-\phi(x)}\,dzdx  \\
  & \quad + 2\iint_{\R^d \times \R^d}
  \abs{d(x+z)-d(x-z)}V_{\delta/2}(z)\phi(x)\,dzdx \\
  &=: \Term_2^1 + \Term_2^2.
\end{align*}
Consider $\Term_2^1$. By Lemma~\ref{lemma:PhiProp},
\begin{equation*}
 \abs{\phi(x+z)-\phi(x)} \le w_{1,\phi}(\abs{z})\phi(x).
\end{equation*}
Hence, by the symmetry of $V$ 
and the triangle inequality,
\begin{align*}
  \abs{\Term_2^1} 
  &\le 4\iint_{\R^d \times \R^d}
  \abs{d(x-z)}V_{\delta/2}(z)
	w_{1,\phi}(\abs{z})\phi(x)\,dzdx 
  \\ &\le 4w_{1,\phi}(\delta/2)\iint_{\R^d \times \R^d}
  \abs{d(y)}V_{\delta/2}(x-y)\phi(x)\,dydx 
  \\ &\le 2w_{1,\phi}(\delta)\norm{\udt(\tau_2)-\udt(\tau_1)}_{1,\phi \star V_{\delta/2}}.
\end{align*}
By Lemma~\ref{lemma:ContMollWeightedNorm} 
and Corollary~\ref{cor:LpLocTimeIntPol}, $\E{\abs{\Term_2^1}} = \ocal(\delta)$. 
By Proposition~\ref{prop:fractional-BV}, it follows in view 
of assumption \eqref{eq:AssumptionFracBVu0} 
that $\E{\abs{\Term_2^2}} = \ocal(\delta^\kappa)$. Consequently,
\begin{equation*}
 \Term_1+\Term_2= \ocal \left(\delta^{\kappa-1}(l-k)\Dt + \sqrt{\tau_2-\tau_1} +\delta^\kappa\right).
\end{equation*}
Choosing $\delta=((l-k)\Dt)$ concludes the proof of (i). 
The result (ii) follows analoguously due to \eqref{eq:weak-timevDt}.
\end{proof}

\section{Convergence}\label{sec:conv}
Equipped with $\Dt$-uniform a priori estimates, we are 
now prepared to study the limiting behavior 
of $\udt, \vdt$ as $\Dt\downarrow 0$. 
As discussed in the introduction, we will apply the 
framework of Young measures. We refer to the 
appendix (Section~\ref{seq:AppendixYoung}) for some 
background material on Young measures and weak compactness.

We start by establishing an approximate entropy 
inequality for the operator splitting solutions.
\begin{lemma}\label{lemma:EntOpSplit}
 Suppose $u^0 \in L^2(\Omega,\fcal_0,P;L^2(\R^d,\phi))$, $\phi \in \mathfrak{N}$. 
 Let $\udt$ and $\vdt$ be defined by \eqref{eq:udt} and \eqref{eq:vdt}, respectively. 
 For any $(S,Q) \in \mathscr{E}$, any $V \in \Sm$, and 
 any non-negative $\test \in C^\infty_c([0,T) \times \R^d)$,
 \begin{equation}\label{eq:EntOpSplit}
   \begin{split}
	&0\le \E{\int_{\R^d} S(u^0(x)-V)\test(0,x)\,dx} \\
	&+\E{\iint_{\Pi_T}S(\udt(t,x)-V)\partial_t \test(t,x) + Q(\vdt(t,x),V) \cdot \nabla \test(t,x)\,\,dxdt} \\
	&-\E{\iint_{\Pi_T} S''(\udt(t,x)-V)D_tV\sigma(x,\udt(t,x))\test(t,x)\,dxdt }\\
	&+\E{\frac{1}{2} \iint_{\Pi_T} S''(\udt(t,x)-V)\sigma^2(x,\udt(t,x))\test(t,x)\,dxdt}\\
	&+\sum_{n = 0}^{N-1}\E{\int_{t_n}^{t_{n+1}}\int_{\R^d} 
	\left(S(\vdt(t,x)-V)-S(\vdt((t_{n+1})-,x)-V\right)\partial_t\test(t,x)\,dxdt}.
   \end{split}
\end{equation}
\end{lemma}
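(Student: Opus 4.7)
The plan is to combine the entropy inequality for the deterministic operator $\scl$ with the anticipative It\^o formula for the stochastic operator $\ssde$ on each sub-interval $[t_n,t_{n+1}]$, and then sum over $n=0,\dots,N-1$. First, applying the entropy inequality satisfied by $\vdt$ on $[t_n,t_{n+1}]$ path-wise with ``Kru\v{z}kov constant'' $V(\omega)$ (legitimate since for each $\omega$ it is merely a real number) and test function $\test$, and then taking expectations, one obtains
\begin{equation*}
\begin{split}
 & \E{\int_{\R^d} S(\vdt((t_{n+1})-)-V)\test(t_{n+1})- S(u^n-V)\test(t_n)\,dx} \\
 & \qquad \le \E{\iint_{[t_n,t_{n+1}]\times \R^d} S(\vdt-V)\partial_t\test + Q(\vdt,V)\cdot\nabla\test\,dxdt}.
\end{split}
\end{equation*}

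Next, on $(t_n,t_{n+1}]$ the process $\udt$ solves $d\udt = \sigma(x,\udt)\,dB$ with initial value $\udt((t_n)+) = \vdt((t_{n+1})-)$. Since $V \in \Sm$ is generally not $\fcal_{t_n}$-measurable, I would invoke the anticipative It\^o formula (cf.\ Nualart--Pardoux) applied to $F(\udt(t),V)\test(t,x)$ with $F(x,y) = S(x-y)$, noting $\partial_x F = S'$, $\partial_{xx} F = S''$ and $\partial_{xy} F = -S''$. Integrating over $\R^d$ and taking expectation (so the anticipative Skorohod integral vanishes by duality with the constant $1$) yields
\begin{equation*}
\begin{split}
 & \E{\int_{\R^d} S(u^{n+1}-V)\test(t_{n+1}) - S(\vdt((t_{n+1})-)-V)\test(t_n)\,dx} \\
 & \qquad = \E{\iint_{[t_n,t_{n+1}] \times \R^d} S(\udt-V)\partial_t\test + \tfrac12 S''(\udt-V)\sigma^2\test\,dxdt} \\
 & \qquad \quad - \E{\iint_{[t_n,t_{n+1}] \times \R^d} S''(\udt-V)\sigma D_tV\,\test\,dxdt}.
\end{split}
\end{equation*}
The cross term $-S''\sigma D_tV$ is the contribution from $\partial_{xy}F$ produced by Malliavin integration by parts, while $\tfrac12 S''\sigma^2$ is the usual It\^o correction.

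Adding the two identities and summing over $n$, the boundary terms reorganize as follows. The diagonal pieces $S(u^{n+1}-V)\test(t_{n+1}) - S(u^n-V)\test(t_n)$ telescope to $-S(u^0-V)\test(0)$ because $\test(T)=0$. The mismatched pieces equal $S(\vdt((t_{n+1})-)-V)\int_{t_n}^{t_{n+1}}\partial_t\test\,dt$, and combining these with the bulk term $\iint S(\vdt-V)\partial_t\test\,dxdt$ from the CL step produces exactly the correction $\sum_n \iint (S(\vdt-V)-S(\vdt((t_{n+1})-)-V))\partial_t\test\,dxdt$ appearing in \eqref{eq:EntOpSplit}. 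A trivial rearrangement then yields the claimed inequality.

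The main obstacle is the rigorous application of the anticipative It\^o formula together with the vanishing-expectation property of the resulting Skorohod integral; this is precisely why the smoothness assumption $V \in \Sm$ is imposed rather than only $V \in \D^{1,2}$. Integrability of all the terms follows from the linear growth of $\sigma$, Corollary~\ref{cor:LpLocTimeIntPol}, and the compact support of $\test$, so once the stochastic calculus step is in place, the remaining manipulations are purely algebraic bookkeeping.
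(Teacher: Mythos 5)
Your proposal follows the same route as the paper: apply Kru\v{z}kov's entropy inequality to $\vdt$ on each $[t_n,t_{n+1}]$ with the random constant $V$, apply the anticipating It\^o formula (Theorem~\ref{theorem:AntIto}) to $S(\udt-V)\test$ on the same interval, take expectations so the Skorohod integral drops out, and sum; your bookkeeping of the boundary terms (separating telescoping ``diagonal'' pieces from the ``mismatched'' ones that produce the $\partial_t\test$ correction using $\udt((t_n)+)=\vdt((t_{n+1})-)$) is exactly equivalent to the identity the paper records after the SDE step. One small point you gloss over: the anticipating It\^o formula in the paper requires $E[(\int u^2)^2]<\infty$, so with $\sigma$ of linear growth you need $\udt\in L^4$, not merely $L^2$; the paper therefore first proves the lemma under the stronger hypothesis $u^0\in L^p(\Omega;L^p(\R^d,\phi))$ for all $p$ and then recovers the $L^2$ case by approximation, a step you should make explicit rather than leave to ``linear growth plus Corollary~\ref{cor:LpLocTimeIntPol}.''
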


\begin{proof}
Let us for the moment assume that $u^0 \in L^p(\Omega,\fcal_0,P;L^p(\R^d,\phi))$ 
for all $2 \leq p < \infty$. By definition, $\vdt$ satisfies
\begin{align*}
	&\int_{\R^d} S(u^n(x)-V)\test(t_n,x)\,dx- \int_{\R^d} S(\vdt((t_{n+1})-,x)-V)\test(t_{n+1},x)\,dx \\
	& \qquad + \int_{t_n}^{t_{n+1}} \int_{\R^d}  S(\vdt(t,x)-V)\partial_t\test(t,x)\,dx\,dt 
	\\ & \qquad 
	+ \int_{t_n}^{t_{n+1}} \int_{\R^d} Q(\vdt(t,x),V) \cdot \nabla \test(t,x)\, dx\,dt	
	\ge 0.
\end{align*}
For fixed $x \in \R^d$, apply Theorem~\ref{theorem:AntIto} with 
$F(\zeta,\lambda,t) = S(\zeta-\lambda)\varphi(t,x)$ and
\begin{displaymath}
 \underbrace{\udt(t,x)}_{X(t)} = \underbrace{\udt((t_n)+,x)}_{X_0} 
 + \int_{t_n}^t \underbrace{\sigma(x,\udt(s,x))}_{u(s)}\,dB(s).
\end{displaymath}
This yields, after integrating in space,
\begin{align*}
\int_{\R^d} S(u^{n+1}(x)-V)&\test(t_{n+1},x)\,dx = \int_{\R^d}S(\udt((t_n)+)-V)\test(t_n,x)\,dx \\
&+ \int_{\R^d}\int_{t_n}^{t_{n+1}}S(\udt(t,x)-V)\partial_t \test(t,x)\,dt\,dx \\
&+ \int_{\R^d}\int_{t_n}^{t_{n+1}}S'(\udt(t,x)-V)\sigma(x,\udt(t,x))\test(t,x)\,dB(t)\,dx \\
&- \int_{\R^d}\int_{t_n}^{t_{n+1}} S''(\udt(t,x)-V)D_tV\sigma(x,\udt(t,x))\test(t,x)\,dt\,dx \\
&+ \frac{1}{2} \int_{\R^d}\int_{t_n}^{t_{n+1}} S''(\udt(t,x)-V)\sigma^2(x,\udt(t,x))\test(t,x)\,dt\,dx,
\end{align*}
where the stochastic integral is a Skorohod integral. Note that 
\begin{multline*}
 \int_{\R^d}S(\udt((t_n)+,x)-V)\test(t_n,x)\,dx-\int_{\R^d} S(\vdt((t_{n+1})-,x)-V)\test(t_{n+1},x)\,dx \\
  = -\int_{\R^d}\int_{t_n}^{t_{n+1}}S(\vdt((t_{n+1})-,x)-V)\partial_t\test(t,x)\,dtdx.
\end{multline*}
 Adding the two equations and taking expectations we attain
 \begin{equation*}
  \begin{split}
	&\E{\int_{\R^d} S(u^n(x)-V)\test(t_n,x)\,dx}-\E{\int_{\R^d} S(u^{n+1}(x)-V)\test(t_{n+1},x)\,dx }\\
	&\quad +\E{\int_{t_n}^{t_{n+1}}\int_{\R^d} 
	\left(S(\vdt(t,x)-V)-S(\vdt((t_{n+1})-,x)-V)\right)\partial_t\test(t,x)\,dx\,dt} \\
	&\quad +\E{\int_{\R^d}\int_{t_n}^{t_{n+1}}S(\udt(t,x)-V)\partial_t \test(t,x)\,dt\,dx} \\
	&\quad +\E{\int_{t_n}^{t_{n+1}} \int_{\R^d} Q(\vdt(t,x),V) \cdot \nabla \test(t,x)\, dx\,dt} \\
	&\quad -\E{\int_{\R^d}\int_{t_n}^{t_{n+1}} S''(\udt(t,x)-V)D_tV\sigma(x,\udt(t,x))\test(t,x)\,dt\,dx }\\
	&\quad +\E{\frac{1}{2} \int_{\R^d}\int_{t_n}^{t_{n+1}} S''(\udt(t,x)-V)\sigma^2(x,\udt(t,x))\test(t,x)\,dt\,dx}
	\ge 0,
  \end{split}
 \end{equation*}
 where we applied the fact that the Skorohod integral has zero expectation. 
 Next we sum over $n = 0,1,\dots,N-1$. This yields \eqref{eq:EntOpSplit}. 
 The result follows for general $u^0 \in L^2(\Omega,\fcal_0,P;L^2(\R^d,\phi))$ 
 by approximation.
\end{proof}

\begin{theorem}\label{thm:entropy}
Suppose \eqref{eq:fluxLip-ass}, \eqref{eq:fluxDoubleDerBounded-ass}, 
\eqref{eq:sigma-ass}, and \eqref{eq:sigma-ass-xdep} hold. Let $\phi \in \mathfrak{N}$ and $2 \le p < \infty$. 
Suppose $u^0 \in L^p(\Omega,\fcal_0,P;L^p(\R^d,\phi))$ 
satisfies \eqref{eq:AssumptionFracBVu0}. 
Let $\udt$ and $\vdt$ be defined by \eqref{eq:udt} and \eqref{eq:vdt}, respectively. 
Then there exists a subsequence $\seq{\Dt_j}$ and a predictable 
$u \in L^p([0,T] \times \Omega;L^p(\R^d \times [0,1],\phi))$ such 
that both $\udtj \rightarrow u$ and $\vdtj \rightarrow u$ in the following sense:
For any Carath\'eodory function $\Psi:\R \times \Pi_T \times \Omega \rightarrow \R$ 
such that $\Psi(\udtj,\cdot) \rightharpoonup \overline{\Psi}$ (respectively 
$\Psi(\vdtj,\cdot) \rightharpoonup \overline{\Psi}$) in 
$L^1(\Pi_T \times \Omega,\phi\,dx \otimes dt \otimes dP)$,
\begin{equation}\label{eq:ReprOfNonLimit}
\overline{\Psi}(t,x,\omega) 
= \int_0^1 \Psi(u(t,x,\alpha,\omega),t,x,\omega)
\,d\alpha. 
\end{equation}
The process $\tilde{u} = \int_0^1 u \,d\alpha$ is an entropy solution in the sense of 
Definition \ref{def:stoch-es} with initial condition $u^0$.
\end{theorem}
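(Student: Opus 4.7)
The plan is to invoke the a priori machinery of Section~\ref{sec:apriori} within the Young-measure framework described in the introduction. First, Corollary~\ref{cor:LpLocTimeIntPol} furnishes $L^p(\phi\,dx \otimes dt \otimes dP)$ bounds on both $\seq{\udt}$ and $\seq{\vdt}$, which are thus uniformly integrable. By the Young-measure compactness result recalled in Section~\ref{sec:appendix}, a subsequence $\Dt_j \downarrow 0$ may be extracted along which $\seq{\udtj}$ generates a Young measure; parametrizing by a uniform variable $\alpha \in [0,1]$ yields a representative $u \in L^p([0,T]\times\Omega;L^p(\R^d\times[0,1],\phi))$ for which \eqref{eq:ReprOfNonLimit} holds. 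Predictability of $u$ is inherited from the predictability of the approximants.

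To see that $\seq{\vdtj}$ generates the \emph{same} Young measure, I would show $\E{\iint_{\Pi_T}\abs{\udt-\vdt}\phi\,dxdt} \to 0$. On $(t_n,t_{n+1}]$, by construction,
\begin{equation*}
\udt(t) = \vdt((t_{n+1})-) + \int_{t_n}^t \sigma(x,\udt(s))\,dB(s),
\end{equation*}
so the It\^o isometry combined with Corollary~\ref{cor:LpLocTimeIntPol} yields $\E{\norm{\udt(t)-\vdt((t_{n+1})-)}_{2,\phi}^2} \le C\Dt$, while the remaining comparison between $\vdt((t_{n+1})-)$ and $\vdt(t)$ is $O(\Dt^\kappa)$ by Proposition~\ref{prop:time-cont}(ii). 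Summing, integrating in $t$, and using that $L^1$-convergence of a difference to zero forces coincidence of Young-measure limits, $\seq{\vdtj}$ and $\seq{\udtj}$ generate the same Young measure.

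Next, I would pass to the limit in the approximate entropy inequality \eqref{eq:EntOpSplit}. For $(S,Q) \in \mathscr{E}$, $V \in \Sm$, and non-negative $\test \in C^\infty_c([0,T)\times\R^d)$, the integrands $S(\udtj-V)\partial_t\test$, $Q(\vdtj,V)\cdot\nabla\test$, $S''(\udtj-V)\sigma(x,\udtj)D_tV\,\test$, and $S''(\udtj-V)\sigma^2(x,\udtj)\test$ are all of the form $\Psi(\udtj,t,x,\omega)$ for Carath\'eodory $\Psi$ with at most $p$-growth (using that $S'$ is bounded, $S''$ is compactly supported, and $\sigma$ has linear growth); hence by \eqref{eq:ReprOfNonLimit} and the coincidence of the two Young-measure limits, each converges weakly in $L^1$ to the averaged expression $\int_0^1 \Psi(u(\alpha),t,x,\omega)\,d\alpha$. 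The error sum on the last line of \eqref{eq:EntOpSplit} is dominated using $\abs{S'}\le\norm{S'}_\infty$ and $\abs{\partial_t\test} \le C_\test\,\phi$ by $C\sum_n\int_{t_n}^{t_{n+1}}\E{\norm{\vdt(t)-\vdt((t_{n+1})-)}_{1,\phi}}\,dt$, and Proposition~\ref{prop:time-cont}(ii) with $k=l=n$ bounds each integrand by $O((t_{n+1}-t)^\kappa)$, so the total is $O(\Dt^\kappa)\to 0$.

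The resulting limiting inequality asserts that $u$ is a Young-measure-valued stochastic entropy solution. Invoking the measure-valued uniqueness/stability theory from \cite{KarlsenStorroesten2014} then forces $u(t,x,\cdot,\omega)$ to be $\alpha$-independent almost everywhere, so that $\tilde u = \int_0^1 u\,d\alpha$ is an entropy solution in the sense of Definition~\ref{def:stoch-es}. I expect the principal obstacles to be (i) ensuring that the predictability of the Young-measure limit and the Skorohod integrability used in Lemma~\ref{lemma:EntOpSplit} genuinely survive the weak passage to the limit, and (ii) verifying that our Young-measure entropy inequality falls within the hypothesis of the measure-valued uniqueness theorem from \cite{KarlsenStorroesten2014} rather than only its pointwise specialization.
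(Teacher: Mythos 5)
Your proposal matches the paper's proof in its overall structure: extract a Young measure via the $L^p$ bounds from Corollary~\ref{cor:LpLocTimeIntPol}, show $\udt$ and $\vdt$ generate the same Young measure by proving $\E{\iint_{\Pi_T}\abs{\udt-\vdt}\phi\,dxdt}\to 0$, pass to the limit in Lemma~\ref{lemma:EntOpSplit} (showing the error term is $O(\Dt^\kappa)$), and conclude by the measure-valued uniqueness result of \cite{KarlsenStorroesten2014}. The only difference is cosmetic: for the stochastic part of the $\udt$--$\vdt$ comparison you compute the bound directly via the It\^o isometry (giving $\sqrt{\Dt}$) rather than citing Proposition~\ref{prop:time-cont}~(i), but that proposition's proof contains exactly the same It\^o-isometry calculation, so this is the same argument.
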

\begin{proof}

\emph{1.~Existence of limits.} 
Let us investigate the limit behavior of $\udt$, noting 
that the same considerations apply to $\vdt$. 
We argue as in \cite[Theorem~4.1, Step~1]{KarlsenStorroesten2014} (see 
also \cite[\S~A.3.3]{Bauzet:2012kx}). We apply 
Theorem~\ref{theorem:YoungMeasureLimitOfComposedFunc} 
to $\seq{\udt}$ on the measure space
\begin{equation*}
(X,\mathscr{A},\mu) = (\Omega \times \Pi_T,\pcal 
\otimes \Borel{\R^d},dP \otimes dt \otimes \phi\,dx).
\end{equation*}
By Corollary~\ref{cor:LpLocTimeIntPol}, 
\begin{equation*}
\sup_{\Dt > 0}\seq{\E{\iint_{\Pi_T} \abs{\udt}^2\phi(x)\,dxdt}} < \infty.
\end{equation*}
Hence there exists a Young measure $\nu = \nu_{t,x,\omega}$ such that 
for any Carath\'eodory function $\Psi$ satisfying 
$\Psi(\udtj,\cdot) \rightharpoonup \overline{\Psi}$ in 
$L^1(\Pi_T \times \Omega,\phi\,dx \otimes dt \otimes dP)$, it follows that 
\begin{equation*}
\overline{\Psi}(t,x,\omega) = \int_\R 
	\Psi(\xi,t,x,\omega)\,d\nu_{t,x,\omega}(\xi). 
\end{equation*}
Define \cite{Eymard:2000fr,Panov:1996aa}
\begin{equation*}
u(t,x,\alpha,\omega) := 
	\inf \seq{\xi \in \R\,:\,\nu_{t,x,\omega}((-\infty,\xi]) > \alpha}.
\end{equation*}
The representation \eqref{eq:ReprOfNonLimit} follows from 
the fact that $\Lebesgue \circ u^{-1}(t,x,\cdot,\omega)
= \nu_{t,x,\omega}$, where $\Lebesgue$ denotes 
the Lebesgue measure on $[0,1]$. 
For predictability and the fact that 
$u \in L^p([0,T] \times \Omega;L^p(\R^d \times [0,1],\phi))$, see 
\cite[Theorem~4.1]{KarlsenStorroesten2014} 
and also \cite[\S~A.3.3]{Bauzet:2012kx}, \cite[\S~3]{Panov:1996aa}.

  \emph{2.~Independence of interpolation.} Denote by $v$ the limit 
of $\seq{\vdt}$, see Step 1. 
We want to show that $v = u$. 
By \cite[Lemma~6.3]{Pedregal1997}, this holds true if
\begin{equation}\label{eq:udtandvdtDist}
 \Term(\Dt):= \E{\iint_{\Pi_T}
	\abs{\udt(t,x)-\vdt(t,x)}\phi(x)\,dtdx} 
	\rightarrow 0 \mbox{ as $\Dt \downarrow 0$.}
\end{equation}
To see this, observe that 
\begin{align*}
   \Term &\le \E{\sum_{n = 0}^{N-1}
	\int_{t_n}^{t_{n+1}}	\int_{\R^d}
	\abs{\udt(t,x)-\udt((t_n)+,x)}
	\phi(x)\,dtdx} 
	\\ & \qquad +\E{\sum_{n = 0}^{N-1}
	\int_{t_n}^{t_{n+1}}\int_{\R^d}
	\abs{\vdt((t_{n+1})-,x)-\vdt(t,x)}\phi(x)\,dtdx} 
	\\ & =: \Term_1 + \Term_2.
\end{align*}
  By Proposition~\ref{prop:time-cont}~(i), 
 \begin{equation*}
 	\Term_1 \le C_{T,\phi}\sum_{n = 0}^{N-1}
	\int_{t_n}^{t_{n+1}}\sqrt{t-t_n}\,dt
	= \frac{2}{3}C_{T,\phi}T\sqrt{\Dt}.
  \end{equation*}
  By Proposition~\ref{prop:time-cont}~(ii),
  \begin{equation*}
  \Term_2 \le C_{T,\phi}\sum_{n = 0}^{N-1}
	\int_{t_n}^{t_{n+1}}(t_{n+1}-t)^\kappa\,dt 
	\le C_{T,\phi}T\Dt^\kappa,
  \end{equation*}
where $\kappa$ is defined in Proposition~\ref{prop:time-cont}. This proves \eqref{eq:udtandvdtDist}.

\emph{3. Entropy inequality.} We need to prove that $u$ is 
a Young measure-valued entropy solution in the sense of \cite[Definition~2.2]{KarlsenStorroesten2014}. 
The result then follows from \cite[Theorem~5.1]{KarlsenStorroesten2014}. Let $S,V,\test$ 
be as in Lemma~\ref{lemma:EntOpSplit} and define 
 \begin{equation*}
  \Term_\Dt := \sum_{n = 0}^{N-1}\E{\int_{t_n}^{t_{n+1}}\int_{\R^d} \big(S(\vdt(t,x)-V)-
S(\vdt((t_{n+1})-,x)-V)\big)\partial_t\test\,dxdt}.
\end{equation*}
We want to show that $\Term_\Dt \rightarrow 0$ as $\Dt \downarrow 0$. Recall the definition of 
the weighted $L^\infty$-norm \eqref{eq:DefWInfNorm}. By Proposition~\ref{prop:time-cont},
 \begin{align*}
  \abs{\Term_\Dt} 
    &\le \norm{S}_{\mathrm{Lip}}\sup_{t \in [0,T]}
	\left\{\norm{\partial_t\test}_{\infty,\phi^{-1}}
	\right\} 
	\\ & \qquad \quad \times
	\sum_{n = 0}^{N-1}\E{\int_{t_n}^{t_{n+1}}
	\int_{\R^d}\abs{\vdt(t,x)
	-\vdt((t_{n+1})-,x)}\phi(x)\,dxdt} \\
    &\le \norm{S}_{\mathrm{Lip}}
	\sup_{t \in [0,T]}
	\left\{\norm{\partial_t\test}_{\infty,\phi^{-1}}
	\right\} C_{T,\phi}T\Dt^\kappa,
 \end{align*}
 as in the proof of Step~2. Concerning the remaining 
 terms in Lemma~\ref{lemma:EntOpSplit}, the limit $\Dt \downarrow 0$ 
 is treated exactly as in \cite[Proof of Theorem~4.1, Step~2]{KarlsenStorroesten2014}. 
 It follows that $u$ is a Young measure-valued entropy solution.
\end{proof}

\section{Error estimate}\label{sec:error}
\FloatBarrier
We now restrict our attention to the case 
\begin{equation*}\label{eq:sigma-ass-xindep}
 \sigma(x,u) = \sigma(u), \qquad \sigma \in L^\infty.
 \tag{$\mathcal{A}_{\sigma,2}$}
\end{equation*}
As mentioned in the introduction, for homogeneous noise 
functions $\sigma = \sigma(u)$, whenever $\E{\norm{\nabla u_0}_{1,\phi}} < \infty$, the 
entropy solution $u$ 
to \eqref{eq:SBL} satisfies a spatial $BV$ estimate of the form
\begin{equation}\label{eq:BVMeanBound}
	\E{\int_{\R^d} \abs{\nabla u(t,x)} \phi(x)\,dx } 
	\leq C, \qquad (0 \leq t \leq T),
\end{equation}
for some finite constant $C$ (depending on $u_0,f,\phi,\sigma,T$). 
Here $\nabla u(t,\cdot)$ is a (locally finite) measure and $\phi \in \mathfrak{N}$. 
This can be seen as a consequence of the fractional space 
translation estimate \eqref{eq:EntSolSpatReg} and Remark~\ref{remark:spatToBV}. 
A direct verification of \eqref{eq:BVMeanBound} can 
also be found in \cite[Theorem~2.1]{Chen:2011fk} (when $\phi \equiv 1$). 
The same estimate is available for the 
operator splitting solution, cf.~Proposition~\ref{prop:fractional-BV}.

For the error estimate, we consider yet another time interpolation $\eta_\Dt$ of the 
operator splitting $\seq{u^n}_{n=0}^N$. Inspired by \cite{Langseth:1996jw}, let
\begin{equation}\label{eq:etaDt}
  \eta_\Dt(t) :=
  \underbrace{(\ssde(t,t_n)-\mathcal{I})
\scl(\Dt)u^n}_{\udt(t)-\scl(\Dt)u^n}
  + \underbrace{\scl(t-t_n)u^n}_{\vdt(t)}, 
\qquad t \in [t_n,t_{n+1}].   
\end{equation}
A graphical representation of the interpolation $\eta_\Dt$ is given in Figure~\ref{fig:etaDtRepr}.
\begin{figure}[h]
\begin{tikzpicture}

\draw[thick, ->] (0,0) -- (4,0) node[right]{$\ssde$};
\draw[thick, ->] (0,0) -- (0,4) node[above]{$\scl$};

\filldraw (1,1) circle (2pt) node[right]{$u^{n}$};
\filldraw (3,3) circle (2pt) node[right]{$u^{n+1}$};

\draw (-0.1,1) -- (0.1,1) node[left=8pt]{$t_n$};
\draw (-0.1,2.5) -- (0.1,2.5) node[left=8pt]{$t$};
\draw (-0.1,3) -- (0.1,3) node[left=8pt]{$t_{n+1}$};

\draw (1,-0.1) -- (1,0.1) node[below=8pt]{$t_n$};
\draw (2.5,-0.1) -- (2.5,0.1) node[below=8pt]{$t$};
\draw (3,-0.1) -- (3,0.1) node[below=8pt]{$t_{n+1}$};

\draw (1,1) -- (1,3) -- (3,3);

\draw [thick] (1,2.5) circle (2pt) node[left]{$+$};
\draw [thick] (2.5,3) circle (2pt) node[above]{$+$};
\draw [thick](1,3) circle (2pt) node[left]{$-$};

\end{tikzpicture}
\caption{A graphical representation of $\eta_\Dt$. 
The value of $\eta_\Dt(t)$ corresponds to 
summing (with signs) the values taken at the unfilled dots.}
\label{fig:etaDtRepr}
\end{figure}

\begin{theorem}\label{theorem:ConRate}
Fix $\phi \in \mathfrak{N}$. 
Suppose \eqref{eq:fluxLip-ass}, \eqref{eq:fluxDoubleDerBounded-ass}, 
\eqref{eq:sigma-ass}, and \eqref{eq:sigma-ass-xindep} are satisfied. 
Suppose also that $u^0,u_0 \in L^2(\Omega,\fcal_0,P;L^2(\R^d,\phi))$ 
satisfy \eqref{eq:AssumptionFracBVu0} with $\kappao = 1$. 
Let $u$ be the entropy solution of \eqref{eq:SBL}--\eqref{eq:SBL-data} 
according to Definition~\ref{def:stoch-es} with initial condition $u_0$, and 
let $\eta_\Dt$ be defined by \eqref{eq:etaDt}. Then there exists a 
constant $C$, independent of $\Dt$ but dependent on 
$\sigma,f,T,\phi,u_0,u^0$, such that
$$
\E{\norm{u(t)-\eta_\Dt(t)}_{1,\phi}} 
\le e^{C_\phi\norm{f}_\mathrm{Lip}t}
\left(\E{\norm{u_0-u^0}_{1,\phi}} 
+ C\Dt^{\frac{1}{3}}\right),
\quad t \in [0,T].
$$
\end{theorem}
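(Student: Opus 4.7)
My plan is to apply a Kuznetsov-type doubling-of-variables argument, making essential use of the generalized entropy condition from \cite{KarlsenStorroesten2014}, which allows the Kru{\v{z}}kov ``constant'' to be a Malliavin differentiable random variable. The comparison is made against $\edt$ (rather than $\udt$ or $\vdt$) because $\edt$ is continuous in time and formally satisfies an approximate version of \eqref{eq:SBL}: for $t \in [t_n,t_{n+1}]$,
\begin{equation*}
  \edt(t) - \edt(t_n) + \int_{t_n}^{t} \Div f(\vdt(s))\,ds = \int_{t_n}^{t}\sigma(\udt(s))\,dB(s),
\end{equation*}
where the flux is evaluated at $\vdt$ and the noise at $\udt$ instead of at $\edt$. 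The discrepancy $\edt-\vdt$ is a stochastic increment of size $\ocal(\sqrt{\Dt})$ by It\^o's isometry, and $\edt-\udt$ is a deterministic entropy-solution increment of size $\ocal(\Dt)$ by the $BV$ bound \eqref{eq:BVMeanBound}; these two scales will drive the error terms.

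First I would derive an approximate entropy inequality for $\edt$: combining Kru{\v{z}}kov's inequality for the deterministic step $\vdt$ with It\^o's formula applied to the stochastic increment and tested against a Malliavin differentiable $V \in \D^{1,2}$, one obtains an inequality that agrees with Definition~\ref{def:stoch-es}(ii) except that $Q(u,V)$ is replaced by $Q(\vdt,V)$ and $\sigma(x,u)$ is replaced by $\sigma(\udt)$. Next I would use Definition~\ref{def:stoch-es} for the entropy solution $u$, taking $V$ to be a space-time mollification of $\edt(s,y)$ via $J_{\epsilon_0}(t-s)J_r(x-y)$, and symmetrically test the approximate inequality for $\edt$ against a mollification of $u(s,y)$. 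Since $\scl$ is deterministic and $\ssde$ preserves Malliavin differentiability, the mollified $\edt$ lies in $\D^{1,2}$ and its Malliavin derivative is computable explicitly via It\^o calculus.

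Summing the two inequalities and sending the entropy regularization $S_\delta \to \abs{\cdot}$ in the usual way, the standard Kuznetsov cancellation handles the flux contributions up to spatial mollification errors of order $\ocal(r)$ --- this is where the $BV$ bound \eqref{eq:BVMeanBound} and Remark~\ref{remark:spatToBV} are essential, trading the factor $\abs{\nabla J_r}\sim 1/r$ against $L^1$-translates of order $r$ --- and temporal mollification errors of order $\ocal(\epsilon_0)$. The It\^o correction terms $\tfrac12 S''(\cdot)\sigma^2$ from both inequalities together with the Malliavin term $S''(\cdot)\sigma D_tV$ combine so that the residues involve only the mismatches $\sigma(\udt)-\sigma(\edt)$, bounded via $\norm{\udt-\edt}_{1,\phi}=\ocal(\Dt)$ using the Lipschitz property \eqref{eq:sigma-ass} and \eqref{eq:BVMeanBound}; similarly, the flux mismatch $f(\vdt)-f(\edt)$ contributes through $\norm{\vdt-\edt}_{1,\phi}$ weighted by $1/r$ factors coming from $\nabla_x J_r$, giving $\ocal(\sqrt{\Dt}/r)$. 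Assembling all of this produces an inequality of the schematic form
\begin{equation*}
 \E{\norm{u(t)-\edt(t)}_{1,\phi}} \le \E{\norm{u_0-u^0}_{1,\phi}}
 + C_\phi\norm{f}_\mathrm{Lip}\int_0^t\E{\norm{u(s)-\edt(s)}_{1,\phi}}\,ds
 + \mathcal{R}(\Dt,r,\epsilon_0),
\end{equation*}
where $\mathcal{R}$ combines the mollification errors $\ocal(r)+\ocal(\epsilon_0)$, the stochastic flux-mismatch $\ocal(\sqrt{\Dt}/r)$, and the second-order It\^o contributions scaling like a power of $\Dt$ divided by $r$ or $\epsilon_0$. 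Choosing $r$ and $\epsilon_0$ as appropriate small positive powers of $\Dt$ balances these terms and yields the rate $\Dt^{1/3}$, and Gronwall's inequality produces the prefactor $e^{C_\phi\norm{f}_\mathrm{Lip}t}$.

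The main obstacle will be the rigorous derivation of the approximate entropy inequality for $\edt$ with a general Malliavin differentiable $V$, followed by careful bookkeeping of the It\^o and Malliavin correction terms so that when the two symmetric inequalities are added the stochastic residues combine into genuinely small remainders (of the stated orders) rather than $O(1)$ contributions. This is precisely the point at which Kru{\v{z}}kov's classical doubling-of-variables technique must be adapted to the stochastic/Malliavin setting, and where the specific construction of the interpolant $\edt$ --- simultaneously tracking the deterministic and stochastic steps --- plays its decisive role.
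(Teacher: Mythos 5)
Your overall strategy---doubling variables against $\eta_\Dt$, exploiting the Malliavin-random ``Kru\v{z}kov constant,'' tracking the two mismatches $\eta_\Dt-\vdt=\mathcal{O}(\sqrt{\Dt})$ (stochastic) and $\eta_\Dt-\udt=\mathcal{O}(\Dt)$ (deterministic), and closing with Gr\"onwall---is the right high-level picture, and it matches the paper in spirit. However, there are two genuine gaps.

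First, your accounting of the flux-mismatch error is not sharp enough to produce the rate $\Dt^{1/3}$. You estimate the contribution of $f(\vdt)-f(\eta_\Dt)$ as $\mathcal{O}(\sqrt{\Dt}/r)$, taking the factor $1/r$ from $\nabla_x J_r$. Optimizing $r+\sqrt{\Dt}/r$ gives $r=\Dt^{1/4}$ and hence only the rate $\Dt^{1/4}$, so your claim that the balance ``yields the rate $\Dt^{1/3}$'' does not follow from your own bounds. The paper's proof is designed precisely to eliminate this $1/r$ factor: in Estimate~\ref{est:T4} the $\nabla_x\test$ is moved off the test function by integrating by parts (using the smooth intermediate $w^n$ and the $BV$ regularity from \eqref{eq:BVMeanBound}), and the resulting terms are then controlled by conditioning on $\fcal_{t_n}$ (so that $\E{\abs{\psi(t)}\,|\,\fcal_{t_n}}\le C\sqrt{t-t_n}$ is deterministic) together with the Burkholder--Davies--Gundy inequality and Lemma~\ref{lemma:SmallTimeStepItoEst}. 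This produces $\Term_4=\mathcal{O}(\sqrt{\Dt})$ with no $1/r$, and it is the combination of the remaining terms $\delta,\ r,\ \sqrt{\Dt},\ \delta/r,\ \Dt/\delta$ (with $\delta=\Dt^{2/3}$, $r=\Dt^{1/3}$) that gives $\Dt^{1/3}$. Without that integration-by-parts + conditioning step, $\Dt^{1/3}$ is out of reach.

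Second, your plan to take $V$ equal to a space-time mollification of $\eta_\Dt(s,y)$ and to compute $D_tV$ ``explicitly via It\^o calculus'' glosses over a real Malliavin differentiability problem: $\eta_\Dt$ involves $\scl(\Dt)u^n$, and $\scl(\Dt)$ is only Lipschitz (not $C^1$) between $L^1$ spaces, so the Malliavin differentiability of $\scl(\Dt)u^n$ is not immediate, and mollifying in $(t,x)$ does not help since the obstruction is in the $\omega$-variable. The paper circumvents this in two ways simultaneously: it replaces $\scl(\Dt)u^n$ by smooth approximations $w^n_k$ when building the auxiliary interpolant $\tilde\eta_\Dt=\psi+\vdt$ (Proposition~\ref{proposition:SumIsEntropySol}), and it puts the Malliavin-differentiability burden on the \emph{other} side of the doubling by comparing with the viscous approximation $\ue$ (whose Malliavin derivative solves an explicit linear SPDE), rather than with $u$ directly; $\varepsilon$ is sent to $0$ at the end. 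Your proposal should either adopt a similar smoothing/viscous device or provide a separate justification that the quantity you choose as $V$ lies in $\D^{1,2}$ with a computable $D_tV$.
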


The proof is split into several parts, the results of which are gathered towards
the end of the section. To help motivate the upcoming technical 
arguments, let us outline a ``high-level" overview of the main idea, 
assuming that all relevant functions are 
smooth in $x$ and the spatial dimension is $d = 1$. 
 
The function $\eta_\Dt$ defined in \eqref{eq:etaDt} ought to satisfy an 
``approximate" entropy inequality. Formally, we have
\begin{equation}\label{eq:SmoothEquApprox}
	d\eta_\Dt + \partial_x f(\vdt) \, dt = \sigma(\udt)\, dB,
\end{equation}
indicating that the error terms can be 
expressed as perturbations of 
the coefficients $f,\sigma$. 
Let $u$ be a smooth (in $x$) solution of \eqref{eq:SBL}. 
By \eqref{eq:SmoothEquApprox},
\begin{displaymath}
 d(\eta_\Dt-u) = -\partial_x(f(\vdt)-f(u))\,dt + (\sigma(\udt)-\sigma(u))\,dB,
\end{displaymath}
and thus the It\^o formula gives
\begin{multline*}
 dS(\eta_\Dt-u) = -S'(\eta_\Dt-u)\partial_x(f(\vdt)-f(u))\,dt 
+S'(\eta_\Dt-u)(\sigma(\udt)-\sigma(u))\,dB \\
+\frac{1}{2}S''(\eta_\Dt-u)(\sigma(\udt)-\sigma(u))^2\,dt,
\end{multline*}
for any $S \in C^2(\R)$. Upon adding and 
subtracting identical terms and 
taking expectations, we arrive at
\begin{align*}
 \E{dS(\eta_\Dt-u)} =& -\E{S'(\eta_\Dt-u)\partial_x(f(\eta_\Dt)-f(u))\,dt} \\
&+ \frac{1}{2}\E{S''(\eta_\Dt-u)(\sigma(\eta_\Dt)-\sigma(u))^2\,dt} \\
&+ \E{S'(\eta_\Dt-u)
\partial_x(f(\eta_\Dt)-f(\vdt))\,dt}\\
&+ \E{S''(\eta_\Dt-u)\left(\int_{\eta_\Dt}^{\udt}(\sigma(z)-\sigma(u))
\sigma'(z)\,dz\right)\,dt}.
\end{align*}
The first two terms vanish as 
$S\rightarrow \abs{\cdot}$. 
Note that these 
terms also appear in the uniqueness 
argument, when two exact solutions are compared. 
Accordingly, they should not be thought 
of as error terms originating from the splitting 
procedure. The last two terms, however, are 
genuine error terms associated with the 
operator splitting and the interpolation $\eta_\Dt$. 
All of the above terms may be recognized 
in the forthcoming Lemma~\ref{lemma:DoublingArg}. 
The above simplified 
representation provides
intuition on how to estimate 
these error terms. This is in particular 
the case concerning the third 
term on the right-hand side. 
To this end, note that 
\begin{displaymath}
 \eta_\Dt - \vdt = \udt -\scl(\Dt)u^n 
= \int_{t_n}^t \sigma(\udt(s))\,dB(s),
\end{displaymath}
for $t_n \leq t < t_{n+1}$. Consequently,
\begin{equation}\label{eq:FluxDiffEx}
	\begin{split}
		\partial_x(f(\eta_\Dt)-f(\vdt)) 
		& = \left(f'(\eta_\Dt)-f'(\vdt)\right)\partial_x\vdt 
		\\ & \qquad\quad 
		+ f'(\eta_\Dt)\int_{t_n}^t \partial_x\sigma(\udt(s))\,dB(s).
	\end{split}
\end{equation}
Furthermore,
\begin{multline*}
	\E{\abs{\left(f'(\eta_\Dt)-f'(\vdt)\right)
	\partial_x\vdt}}
	\\ \leq \norm{f'}_\mathrm{Lip}
	\E{\E{\abs{\int_{t_n}^t \sigma(\udt(s))\,dB(s)}\,
	\Big|\,\fcal_{t_n}}\abs{\partial_x\vdt}},
\end{multline*}
which provides a way to estimate the term 
since $\vdt(t)\in BV$ 
and $\sigma\in L^\infty$.

Due to the lack of regularity we will work 
with an approximation of $\eta_\Dt$. 
Given $\seq{w^n = w^n(x)}_{n = 0}^{N-1}$, we set
\begin{equation}\label{eq:psiwDef}
\psi(t) := (\ssde(t,t_n)-\mathcal{I})w^n, 
\qquad t \in [t_n,t_{n+1}), 
\end{equation}
and $\etat := \psi + \vdt$. 
Note that $\edt = \psi + \vdt$ 
whenever $w^n = \scl(\Dt)u^n$ for $n = 0,\dots,N-1$. However, due to 
the lack of differentiability of $\scl(\Dt)u^n$, we 
will work with a sequence $\seq{w^n_k}_{k \geq 1}$ of smooth 
functions satisfying $w^n_k \rightarrow \scl(\Dt)u^n$ in 
$L^1(\Omega;L^1(\R^d,\phi))$ as $k \rightarrow \infty$. 
To simplify notation we suppress 
the dependence on $k$ and write $w^n = w^n_k$.

\begin{proposition}\label{proposition:SumIsEntropySol}
Suppose \eqref{eq:fluxLip-ass}, 
\eqref{eq:sigma-ass}, and \eqref{eq:sigma-ass-xindep} are 
satisfied. Let $\etat = \psi + \vdt$, where 
$\psi$ and $\vdt$ are defined in \eqref{eq:psiwDef} 
and \eqref{eq:vdt}, respectively. 
Then, for all nonnegative 
$\phi \in C^\infty_c([t_n,t_{n+1}]\times \R^d)$, any $V \in \D^{1,2}$, 
and all entropy/entropy-flux pairs $(S,Q) \in \mathscr{E}$,
\begin{align*}
    &\E{\int_{\R^d} S(\etat(t_n,x)-V)\phi(t_n,x)\,dx} \\
    &\qquad- \E{\int_{\R^d} 
	S(\etat((t_{n+1})-,x)-V)\phi(t_{n+1},x)\,dx} \\
    &\qquad+ \E{\iint_{\Pi_n} S(\etat-V)\partial_t\phi +
      Q(\etat,V)\cdot \nabla \phi\,dxdt} \\ 
    &\qquad + \E{\iint_{\Pi_n} \int_V^\etat S'(z-V)	
	\left(f'(z-\psi)-f'(z)\right)\,dz 
	\cdot \nabla\phi\,dxdt} \\
    &\qquad + \E{\iint_{\Pi_n} 
	\int_V^\etat S''(z-V)f'(z-\psi)\,dz 
	\cdot \nabla\psi \,\phi\,dxdt} \\
    &\qquad - \E{\iint_{\Pi_n}S''(\etat - V)D_tV 	
	\sigma(\psi + w^n)\phi\,dxdt} \\
    &\qquad + \frac{1}{2}\E{\iint_{\Pi_n}S''(\etat -V)	
	\sigma^2(\psi +w^n)\phi\,dxdt} \ge 0,
  \end{align*}
  where $\Pi_n = [t_n,t_{n+1}] \times \R$.
\end{proposition}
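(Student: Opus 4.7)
The starting point is the formal identity $d\etat + \Div f(\vdt)\,dt = \sigma(\psi + w^n)\,dB$, which expresses $\etat=\psi+\vdt$ as the sum of the semimartingale $\psi$ and the deterministic entropy solution $\vdt$ of \eqref{eq:CL} on $[t_n,t_{n+1}]$ starting from $u^n$. The stated inequality is what one would obtain by applying It\^o's formula to $S(\etat - V)$, integrating against $\phi$, and re-expressing every occurrence of $f(\vdt) = f(\etat - \psi)$ in terms of $\etat$ alone. The two extra integrand terms in the statement precisely measure the defect between the standard entropy flux $Q(\etat, V)$ and the ``shifted'' flux $\widetilde Q(v, V, \psi) := \int_V^v S'(z - V) f'(z - \psi)\,dz$ that actually governs the drift of $\etat$.

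My plan is to combine (i) the entropy inequality for $\vdt$, which holds $\omega$-a.s. since $\vdt(\cdot, \cdot, \omega)$ is the classical Kru{\v{z}}kov entropy solution of \eqref{eq:CL} with initial datum $u^n(\cdot, \omega)$, with (ii) the anticipating It\^o formula of Theorem~\ref{theorem:AntIto}, applied pointwise in $x$ to the Ito process $\psi(t, x)$, which by construction satisfies $d\psi = \sigma(\psi + w^n)\,dB$. The stochastic contribution is routine: It\^o generates the Malliavin term $-S''(\etat - V) D_t V\,\sigma(\psi + w^n)\,\phi$ and the quadratic-variation term $\tfrac12 S''(\etat - V)\,\sigma(\psi + w^n)^2\,\phi$, while the $dB$-integral vanishes in expectation as a Skorohod integral. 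The deterministic part is the main technical hurdle: since $\vdt$ lives only in $C([t_n, t_{n+1}]; L^1_{\loc})$, I cannot differentiate $F(\zeta, \lambda, t) := S(\vdt(t, x) + \zeta - \lambda)\phi(t, x)$ in $t$ inside the It\^o formula. Following \cite{KarlsenStorroesten2014}, I would handle this by Kru{\v{z}}kov doubling: test the entropy inequality for $\vdt(t,x)$ with the ``constant'' $c = V - \psi(s, y)$ indexed by an auxiliary pair $(s, y)$, apply Theorem~\ref{theorem:AntIto} to $\psi(s, y)$, and pass to the diagonal $(s, y) \to (t, x)$.

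Once the diagonal identification is complete, the drift contribution reads $-\E{\iint_{\Pi_n} S'(\etat - V) f'(\etat - \psi) \cdot \nabla(\etat - \psi)\, \phi\,dxdt}$, and the chain rule $\Div \widetilde Q(\etat, V, \psi) = S'(\etat - V) f'(\etat - \psi) \cdot \nabla \etat + \partial_\psi \widetilde Q \cdot \nabla \psi$ (valid because $V$ is spatially constant) lets me integrate by parts against $\phi$ and isolate $\iint \widetilde Q \cdot \nabla \phi$. Splitting $\widetilde Q = Q(\cdot, V) + (\widetilde Q - Q(\cdot, V))$ recovers the standard flux term $\iint Q(\etat, V) \cdot \nabla \phi$ and produces the first error term in the statement. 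The remaining $\partial_\psi \widetilde Q \cdot \nabla \psi$ contribution, combined with the $S'(\etat - V) f'(\etat - \psi) \cdot \nabla \psi$ piece coming from the $-\nabla \psi$ part of $\nabla(\etat - \psi)$, is recast via an integration by parts in the auxiliary $z$-variable as the second error term. A density argument in $\Sm \subset \D^{1,2}$ finishes the proof.

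The main obstacle is the doubling-and-diagonal step: because the entropy inequality for $\vdt$ is only an inequality rather than an identity, one must carefully track its sign through the substitution $c \mapsto V - \psi(s, y)$ and ensure that the anticipating It\^o corrections are inserted consistently. Controlling the cross terms that arise when $\psi(s, y)$ is evaluated near the diagonal, and verifying that they vanish in the limit $(s, y) \to (t, x)$, is the most delicate part of the computation.
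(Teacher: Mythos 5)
Your proposal follows essentially the same route as the paper's proof: test the Kru{\v{z}}kov entropy inequality for $\vdt$ against the random ``constant'' $c = V - \psi(s,y)$, apply the anticipating It\^{o} formula (Theorem~\ref{theorem:AntIto}) to the It\^{o} process generated by $\psi$ with the Malliavin-differentiable $V$ frozen, add the two, and pass to the diagonal $(s,y)\to(t,x)$ via a Kru{\v{z}}kov mollifier. The algebraic decomposition you describe for the flux — splitting the ``shifted'' flux $\int_V^v S'(z-V)f'(z-\psi)\,dz$ into $Q(v+\psi,V)$ plus the correction $\int_V^{v+\psi}S'(z-V)(f'(z-\psi)-f'(z))\,dz$, and integrating by parts in $y$ to produce the $\nabla\psi$ error term — is exactly what the paper does in Limit~\ref{limit:I2}, and your final density argument from $\Sm$ to $\D^{1,2}$ matches the paper's. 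The only slight imprecision is your characterization of the post-diagonal drift as $-S'(\etat-V)f'(\etat-\psi)\cdot\nabla(\etat-\psi)\phi$: since $\vdt$ is only an entropy solution, this pointwise expression never literally appears; the actual manipulation happens at the level of the weak form with $\nabla_x\test$ and $\nabla_y$-integration-by-parts, as you correctly indicate in the preceding doubling step.
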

The proof of Proposition \ref{proposition:SumIsEntropySol} is deferred
to Section~\ref{sec:ProofofSumIsEntropy}. To ensure that the relevant 
quantities are Malliavin differentiable, we replace the entropy 
solution $u$ by the viscous approximation $\ue$, which solves
\begin{equation*}
  d\ue + \nabla \cdot f(\ue)dt = \sigma(x,\ue)dB(t) +
  \varepsilon\Delta\ue dt, \qquad \ue(0) = u_0, 
\end{equation*}
and then send $\varepsilon \downarrow 0$ at a later stage. Let us 
recall that $\seq{D_r\ue(t)}_{t > r}$ is a predictable weak solution to the linear problem
\begin{equation*}
 dw + \nabla \cdot (f'(\ue)w)dt = \sigma'(x,\ue)wdB(t) +
  \varepsilon\Delta w dt, \qquad w(r) = \sigma(\ue(r)),
\end{equation*}
for almost all $r \in [0,T]$, cf.~\cite[\S~3]{KarlsenStorroesten2014}. Furthermore,
\begin{displaymath}
 \esssup_{r \in [0,T]}\seq{\sup_{t \in [0,T]}\E{\norm{D_r\ue(t)}_{2,\phi}^2}} < \infty.
\end{displaymath}
As a consequence of \cite[Theorem~5.1]{KarlsenStorroesten2014} 
and \cite[Proposition~6.12]{Pedregal1997}, we have 
$\ue \rightarrow u$ in $L^1([0,T] \times \Omega;L^1(\R^d,\phi))$ 
as $\varepsilon \downarrow 0$. In fact, under the 
assumptions of Theorem~\ref{theorem:ConRate}, $\ue \rightarrow u$ with rate 
$1/2$ \cite[Theorem~5.2]{Chen:2011fk}.

We may now proceed with the 
doubling-of-the-variables argument.

\begin{lemma}\label{lemma:DoublingArg}
Fix $\phi \in \mathfrak{N}$. Let 
$\ue = \ue(s,y)$ be the viscous approximation of \eqref{eq:SBL}. 
Take $w(t,x) = w^n(x)$ for $t \in [t_n,t_{n+1})$, and let 
$\psi = \psi(t,x)$, $\vdt = \vdt(t,x)$, and $\etat = \etat(t,x)$ 
be defined in Proposition~\ref{proposition:SumIsEntropySol}. 
Let $t_0 \in [0,T)$, and pick $\gamma,r_0,r > 0$ such 
that $t_0 \leq T-2(\gamma + r_0)$. Define
 \begin{equation*}
   \xi_\gamma(t) = 1-\int_0^tJ_\gamma^+(s-t_0)\,ds.
 \end{equation*}
 Furthermore, let
 \begin{equation*}
 \test(t,x,s,y) = \frac{1}{2^d}\phi\left(\frac{x+y}{2}\right)
 J_r\left(\frac{x-y}{2}\right)
J_{r_0}^+(s-t)\xi_\gamma(t),
 \end{equation*}
and $S_\delta$ be defined in \eqref{eq:SdeltaDef}. Then
\begin{equation}\label{eq:DoublingIneqErrEst}
   L - R  + F + \Term_1 
+ \Term_2 + \Term_3 + \Term_4 
+ \Term_5 + \Term_6 \ge 0,
\end{equation}
 where
 \begin{align*}
   L &= \E{\iint_{\Pi_T}\int_{\R^d} 
	S_\delta(\etat(0,x)-\ue(s,y))
	\test(0,x,s,y)\,dxdsdy},\\
   R &= -\E{\iiiint_{\Pi_T^2} 
	S_\delta(\etat-\ue)(\partial_t
	+ \partial_s)\test\,dX},\\
   F &= \E{\iiiint_{\Pi_T^2} Q(\ue,\etat)
	\cdot \nabla_y\test +
     Q(\etat,\ue)\cdot \nabla_x\test\,dX}, \\ 
   \Term_1 &= \frac{1}{2}\E{\iiiint_{\Pi_T^2}
     S_\delta''(\ue-\etat)
	(\sigma(\ue)-\sigma(\etat))^2 \test \,dX},
   \\ 
   \Term_2 &= \E{\iiiint_{\Pi_T^2}
     S_\delta''(\ue-\etat)(\sigma(\ue)-D_t\ue)
	\sigma(\psi + w)\test
     \,dX}, \\ 
   \Term_3 &= \E{\iiiint_{\Pi_T^2}
     S_\delta''(\ue-\etat)\left(\int_\etat^{\psi+w}
	(\sigma(z)-\sigma(\ue))\sigma'(z)\,dz\right) \test
     \,dX}, \\ 
   \Term_4 &= \E{\iiiint_{\Pi_T^2} \int_\ue^\etat S_\delta'(z-\ue)
	\left(f'(z-\psi)-f'(z)\right)\,dz \cdot \nabla_x\test\,dX} \\ 
	   &\quad +\E{\iiiint_{\Pi_T^2}\int_\ue^\etat 
	S_\delta''(z-\ue)f'(z-\psi)\,dz \cdot \nabla_x\psi \test\,dX},\\ 
   \Term_5 &= \varepsilon\E{\iiiint_{\Pi_T^2}
	S_\delta(\ue-\etat)\Delta_y\test\,dX}, \\
   \Term_6 &= \sum_{n=0}^{N-1}E\Biggl[\iint_{\Pi_T}\int_{\R^d}
   \Big(S_\delta(\etat((t_{n+1}),x)-\ue(s,y)) \\ 
   &\hphantom{= -\sum_{n=0}^{N-1}E\Bigl[\iint_{\Pi_T}\int_{\R^d}\Big(}
   -S_\delta(\etat((t_{n+1})-,x)-\ue(s,y))\Big)
	\test(t_{n+1},x,s,y)\,dxdsdy\Biggr],
 \end{align*}
 where $dX = dxdtdsdy$.
\end{lemma}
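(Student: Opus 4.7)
\medskip
\noindent\emph{Proof plan.}
I would combine two entropy-type inequalities via a stochastic Kru\v{z}kov doubling of variables: the entropy inequality for $\etat$ supplied by Proposition~\ref{proposition:SumIsEntropySol}, and an It\^o identity for the viscous approximation $\ue$. The cutoff $\xi_\gamma$ and the one-sided mollifier $J_{r_0}^+(s-t)$ built into $\test$ will ensure that all boundary contributions at $t = T$ and $s = T$ vanish, while the condition $t_0 \le T - 2(\gamma + r_0)$ keeps supports admissible. The stochastic integrals will contribute only through their It\^o and (for the non-adapted pieces) Malliavin correction terms.

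\medskip
\noindent\emph{Step 1: entropy inequality for $\etat$.}
I would apply Proposition~\ref{proposition:SumIsEntropySol} on each slab $[t_n,t_{n+1}]$ with test function $\test(\cdot,\cdot,s,y)$ (treating $(s,y)$ as parameters) and the Malliavin-differentiable Kru\v{z}kov ``constant'' $V = \ue(s,y) \in \D^{1,2}$. Integrating in $(s,y)$, taking expectation, and summing over $n = 0,\ldots,N-1$, the endpoint contributions $\int S_\delta(\etat(t_n)-V)\test(t_n)\,dx - \int S_\delta(\etat((t_{n+1})-)-V)\test(t_{n+1})\,dx$ telescope: the value at $t=0$ yields $L$, the value at $t=T$ is killed by $\xi_\gamma$, and the internal mismatches between $\etat((t_n)-)$ and $\etat(t_n)$ accumulate into $\Term_6$. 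The interior terms furnish the $\nabla_x\test$ half of $F$, the $\partial_t\test$ half of $R$, the full error $\Term_4$, together with the Malliavin piece $-\E{\iiiint S_\delta''(\etat-\ue)\sigma(\psi+w)D_t\ue\,\test\,dX}$ and the It\^o correction $\tfrac12\E{\iiiint S_\delta''(\etat-\ue)\sigma^2(\psi+w)\test\,dX}$.

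\medskip
\noindent\emph{Step 2: It\^o identity for $\ue$ and assembly.}
Next I would apply the anticipating It\^o formula of Theorem~\ref{theorem:AntIto} to $s \mapsto S_\delta(\ue(s,y)-\etat(t,x))$, using that $\etat(t,x) \in \D^{1,2}$ for fixed $(t,x)$. After multiplying by $\test$, taking expectation, integrating by parts in $y$ to move derivatives off $\ue$, and in $s$ to move $\partial_s$ onto $\test$, one obtains the $\partial_s\test$ half of $R$, the $\nabla_y\test$ half of $F$, the viscous term $\Term_5$, and the It\^o correction $\tfrac12\E{\iiiint S_\delta''(\ue-\etat)\sigma^2(\ue)\test\,dX}$. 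Adding Steps~1 and~2 produces $L - R + F + \Term_4 + \Term_5 + \Term_6$ plus the three remaining quadratic contributions. The identity
\begin{equation*}
\int_\etat^{\psi+w}\bigl(\sigma(z)-\sigma(\ue)\bigr)\sigma'(z)\,dz
= \tfrac12\bigl[(\sigma(\psi+w)-\sigma(\ue))^2-(\sigma(\etat)-\sigma(\ue))^2\bigr]
\end{equation*}
then rearranges $\tfrac12\sigma^2(\ue)+\tfrac12\sigma^2(\psi+w)-D_t\ue\,\sigma(\psi+w)$ into $\Term_1+\Term_2+\Term_3$, yielding \eqref{eq:DoublingIneqErrEst}.

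\medskip
\noindent\emph{Main obstacle.}
The principal difficulty is the rigorous implementation of Step~2: the anticipating It\^o calculus against an $s$-nonadapted but Malliavin-differentiable Kru\v{z}kov ``constant'' $\etat(t,x)$, and the ensuing bookkeeping of Malliavin cross terms. Making each cross term land exactly in $\Term_1,\Term_2$, or $\Term_3$ relies on the fact that $\psi + w - \etat = w - \vdt$ is a purely deterministic (conservation-law) perturbation, so all Brownian quadratic variation stems from $\ue$ or from $\psi + w = \ssde(t,t_n)w^n$. A subsidiary technical point is the legitimacy of the substitution $V = \ue(s,y)$ in Proposition~\ref{proposition:SumIsEntropySol}; it is precisely for this reason that $\etat$ is built from the smoothed $w^n$ rather than from $\scl(\Dt)u^n$, the passage $w^n \to \scl(\Dt)u^n$ being deferred beyond \eqref{eq:DoublingIneqErrEst} and handled by dominated convergence.
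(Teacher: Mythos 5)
Your plan follows essentially the same route as the paper: apply Proposition~\ref{proposition:SumIsEntropySol} with the Malliavin--differentiable Kru\v{z}kov ``constant'' $V = \ue(s,y)$, integrate in $(s,y)$, sum over $n$ so that the interval endpoints telescope into $L$, the $t=T$ boundary (killed by $\xi_\gamma$) and $\Term_6$; combine with an It\^{o}--type entropy inequality for $\ue$; and rearrange the quadratic terms via the identity
\begin{equation*}
\tfrac12\sigma^2(\ue) - D_t\ue\,\sigma(\psi+w) + \tfrac12\sigma^2(\psi+w)
= \tfrac12\bigl(\sigma(\etat)-\sigma(\ue)\bigr)^2
+ \int_\etat^{\psi+w}(\sigma(z)-\sigma(\ue))\sigma'(z)\,dz
+ (\sigma(\ue)-D_t\ue)\sigma(\psi+w),
\end{equation*}
which is precisely the paper's computation and recovers $\Term_1 + \Term_2 + \Term_3$.

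One small misattribution, worth correcting though it does not change the outcome: you place the anticipating It\^{o} formula (Theorem~\ref{theorem:AntIto}) on the $\ue$ side (your Step~2), invoking $\etat(t,x)\in\D^{1,2}$. In fact the paper's treatment of the $\ue$ side is classical: the test function contains $J_{r_0}^+(s-t)$, supported where $s>t$, so $\etat(t,x)$ is $\fcal_s$-measurable there and the stochastic integral in $s$ is adapted with zero mean --- no Malliavin correction arises (equivalently $D_s\etat(t,x)=0$ for $s>t$, so your more general route degenerates to the same thing). The genuinely anticipating ingredient is entirely in your Step~1, namely Proposition~\ref{proposition:SumIsEntropySol} applied with $V=\ue(s,y)$, where the $\etat$-entropy inequality is tested against a future-measurable random variable at times $t<s$; this is also why $u$ is replaced by the viscous approximation $\ue$ (to guarantee $\ue(s,y)\in\D^{1,2}$) and why the $D_t\ue$ term appears in $\Term_2$.
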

\begin{proof}
Let us first assume $\phi \in C^\infty_c(\R^d)$, as the 
result for $\phi\in \mathfrak{N}$ then follows from an 
approximation argument. 
After a standard application of It\^o's formula to 
$\ue(s,y) \mapsto S_\delta(\ue(s,y)-\etat(t,x))
\test(s)$ for $s \geq t$, we arrive at 
 \begin{align*}
   &\E{\iiiint_{\Pi_T^2} 
	S_\delta(\ue-\etat)\partial_s\test +
     Q(\ue,\etat)\cdot \nabla_y\test \,dX} \\
   &\quad +\frac{1}{2}\E{\iiiint_{\Pi_T^2} 
	S_\delta''(\ue-\etat)\sigma^2(\ue) \test \,dX} 
   +\varepsilon\E{\iiiint_{\Pi_T^2}
	S_\delta(\ue-\etat)\Delta_y\test\,dX}\ge 0,
 \end{align*}
  cf.~\cite[Lemma~5.3]{KarlsenStorroesten2014}. 
	Take $V = \ue(s,y)$ in
  Proposition~\ref{proposition:SumIsEntropySol}, 
	integrate in $(s,y) \in \Pi_T$, and 
	sum over $n = 0,\dots,N-1$.
The outcome is 
 \begin{align*}
  &\E{\iint_{\Pi_T}\int_{\R^d} 
	S_\delta(\etat(0,x)-\ue(s,y))
	\test(0,x,s,y)\,dxdsdy} \\
  &\qquad+ \E{\iiiint_{\Pi_T^2} S_\delta(\etat-\ue)
	\partial_t\test + Q(\etat,\ue)
	\cdot \nabla_x\test\,dX} \\
  &\qquad+ \E{\iiiint_{\Pi_T^2} \int_\ue^\etat S_\delta'(z-\ue)
	\left(f'(z-\psi)-f'(z)\right)\,dz 
	\cdot \nabla_x\test\,dX} \\
  &\qquad+ \E{\iiiint_{\Pi_T^2}
	\int_\ue^\etat S_\delta''(z-\ue)f'(z-\psi)\,dz 
	\cdot \nabla_x\psi \test\,dX} \\
  &\qquad- \E{\iiiint_{\Pi_T^2}S_\delta''(\etat - \ue)
	D_t\ue \sigma(\psi + w)\test\,dX} \\
  &\qquad+ \frac{1}{2}
	\E{\iiiint_{\Pi_T^2}S_\delta''(\etat
    -\ue)\sigma^2(\psi + w)\test\,dX} \\ 
  &\qquad+ \sum_{n=0}^{N-1}
	E\Biggl[\iint_{\Pi_T}\int_{\R^d}
  \Big(S_\delta(\etat((t_{n+1}),x)-\ue(s,y)) \\ 
  &\qquad \hphantom{- \sum_{n=0}^{N-1}
	E\Big[\iint_{\Pi_T}\int_{\R^d}}
  -S_\delta(\etat((t_{n+1})-,x)
	-\ue(s,y))\Big)\test(t_{n+1},x,s,y)\,dxdsdy\Biggr] 
  \ge 0.
 \end{align*}
The lemma follows upon adding the two previous 
inequalities, noting that  
 \begin{align*}
   \frac{1}{2}\sigma^2(\ue)& - D_t\ue \sigma(\psi + w) 
	+ \frac{1}{2}\sigma^2(\psi + w) \\
   &= \frac{1}{2}(\sigma(\psi+w)-\sigma(\ue))^2 
	+ (\sigma(\ue)-D_t\ue)\sigma(\psi+w) \\
   &= \frac{1}{2}(\sigma(\etat)-\sigma(\ue))^2 
	+ \int_\etat^{\psi + w}(\sigma(z)-\sigma(\ue))	\sigma'(z)\,dz \\
   &\hphantom{XXXXXXXXXXXXXXXxx} 
	+ (\sigma(\ue)-D_t\ue)\sigma(\psi+w).
 \end{align*}
\end{proof}
In the following we estimate the terms 
appearing in Lemma~\ref{lemma:DoublingArg}. 
The underlying assumptions are the ones made in 
Theorem~\ref{theorem:ConRate}. We let $C$ denote a 
generic constant, meaning that it is 
independent of the ``small'' parameters 
$\Dt,r,r_0,\gamma,\varepsilon,\delta$. 
Furthermore, given a term $\Term$, we write $\Term = \ocal(g(\Dt, \dots,\delta))$ 
whenever $\abs{\Term} \leq Cg(\Dt, \dots,\delta)$ for 
some nonnegative function $g$. 

\begin{estimate}\label{est:L}
Let $L$ be defined in 
Lemma~\ref{lemma:DoublingArg}. Then
  \begin{equation*}
    \limsup_{r_0 \downarrow 0}L 
	\leq \E{\norm{u_0-u^0}_{1,\phi}} 
	+ \ocal\left(\delta + r\right). 
  \end{equation*}
\end{estimate}

\begin{proof}
  By \eqref{eq:Sdelta-est},
  \begin{equation*}
    \big|S_\delta(\etat(0,x)-\ue(s,y)) 
	- \abs{\etat(0,x)-\ue(s,y)}\big| \le \delta.
  \end{equation*}
  By the reverse triangle inequality
  \begin{align*}
    \big|\abs{\etat(0,x)-\ue(s,y)}
	-\abs{\etat(0,x)-u_0(y)}\big| & \le
    \abs{\ue(s,y)-u_0(y)}, \\ 
    \big|\abs{\etat(0,x)-u_0(y)}
	-\abs{\etat(0,x)-u_0(x)}\big| & \le
    \abs{u_0(y)-u_0(x)}.
  \end{align*}
  Hence, after adding and subtracting 
	identical terms, noting 
	that $\etat(0) = u^0$, it
  follows by the triangle inequality that
  \begin{multline*}
    \big|S_\delta(\etat(0,x)-\ue(s,y)) 
		- \abs{u^0(x)-u_0(x)}\big| \\
    \le \delta + \abs{\ue(s,y)-u_0(y)} +
    \abs{u_0(y)-u_0(x)}.
  \end{multline*}
  By \eqref{eq:symmetricMollExpr},
  \begin{align*}
    &\abs{L - \E{\norm{u^0-u_0}_{1,\phi \star J_r}}} \\
    & \qquad \le \delta \norm{\phi}_{L^1(\R^d)} +
    \underbrace{\int_0^T
	\E{\norm{\ue(s)-u_0}_{1,\phi \star
          J_r}}J_{r_0}^+(s)\,ds}_{\Zerm_1} \\ 
    & \qquad \qquad 
		+ \underbrace{\E{\frac{1}{2^d}\iint_{\R^d \times
          \R^d}\abs{u_0(y)-u_0(x)}
        \phi\left(\frac{x+y}{2}\right)
		J_r\left(\frac{x-y}{2}\right)\,dxdy}}_{\Zerm_2}.
  \end{align*}
  Thanks to \cite[Lemma~2.3]{KarlsenStorroesten2014}, 
	$\Zerm_1 \to 0$ as $r_0 \to 0$. 
	Regarding $\Zerm_2$ we apply \eqref{eq:ChVarBV}. 
	As $u_0$ satisfies \eqref{eq:AssumptionFracBVu0} 
	with $\kappao = 1$,
  \begin{equation*}
    \Zerm_2 = \E{\iint_{\R^d \times
        \R^d}\abs{u_0(x+z)-u_0(x-z)}
			\phi(x)J_r(z)\,dxdz} =
    \ocal(r).
  \end{equation*}
  Finally, we apply Lemma~\ref{lemma:ContMollWeightedNorm} to conclude
  that
  \begin{equation*}
    \abs{\E{\norm{u^0-u_0}_{1,\phi \star J_r}
	-\norm{u^0-u_0}_{1,\phi}}} = \ocal(r).
  \end{equation*}
\end{proof}

\begin{estimate}\label{est:R}
Let $R$ be defined in 
Lemma~\ref{lemma:DoublingArg}. Then
  \begin{equation*}
    \liminf_{\varepsilon,r_0 \downarrow 0}R 
		\geq \E{\int_0^T
      \norm{\etat(t)-u(t)}_{1,\phi}
		J_\gamma^+(t-t_0)\,dt} 
		+ \ocal\left(\delta + r\right).
  \end{equation*}
\end{estimate}

\begin{proof}
  It is easy to check that
  \begin{equation*}
   \begin{split}
    R &= E\bigg[\iiiint_{\Pi_T^2} 
		S_\delta(\etat(t,x)-\ue(s,y))
      \frac{1}{2^d}\phi\left(\frac{x+y}{2}\right) \\
      &\hphantom{XXXXXXXXXXXXXX}
      \times J_r\left(\frac{x-y}{2}\right) 
		J_{r_0}^+(s-t)
		J_\gamma^+(t-t_0)\,dX\bigg]. 
   \end{split}
  \end{equation*}
  Moreover, adding and subtracting 
identical terms, we obtain
  \begin{multline*}
    \abs{S_\delta(\etat(t,x)-\ue(s,y))
		- \abs{\etat(t,x)-\ue(t,x)}} \le  \\
    \delta + \abs{\ue(s,y)-\ue(t,y)} 
		+ \abs{\ue(t,y)-\ue(t,x)},
  \end{multline*}
  and so
  \begin{align*}
    &\Bigl|R - \E{\int_0^T 
	\norm{\etat(t)-\ue(t)}_{1,\phi \star
        J_r}J_\gamma^+(t-t_0)\,dt} \Bigr|
    \\ &\le \delta \norm{\phi}_{L^1(\R^d)} +
    \underbrace{\E{\iint_{[0,T]^2} 
		\norm{\ue(s)-\ue(t)}_{\phi \star J_r}
		J_{r_0}^+(s-t)
		J_\gamma^+(t-t_0)\,dsdt}}_{\Zerm_1} \\
    &+\underbrace{\E{\iint_{\Pi_T}
		\int_{\R^d}\abs{\ue(t,y)
          -\ue(t,x)}\frac{1}{2^d}\phi
			\left(\frac{x+y}{2}\right)
        J_r\left(\frac{x-y}{2}\right)
		J_\gamma^+(t-t_0)\,dxdydt}}_{\Zerm_2}.
  \end{align*}
Owing to Lemma~\ref{lemma:DoubLebCon}, 
$\lim_{r_0 \downarrow 0}
\Zerm_1 = 0$. Next, we utilize the 
strong convergence
$\ue \rightarrow u$ in $L^1([0,T] 
\times \Omega;L^1(\R^d,\phi))$ 
and \eqref{eq:ChVarBV} to conclude that
  \begin{equation*}
    \lim_{\varepsilon,r_0 \downarrow 0}\Zerm_2 =
    \int_0^T\E{\iint_{\R^d \times
        \R^d}\abs{u(t,x+z)-u(t,x-z)}
			\phi(x)J_r(z)\,dxdz}
			\,J_\gamma^+(t-t_0)dt. 
  \end{equation*}
It follows from \cite[Proposition~5.2]{KarlsenStorroesten2014} 
and the assumption \eqref{eq:AssumptionFracBVu0} with $\kappao = 1$ 
that $\abs{\lim_{\varepsilon,r_0 \downarrow 0}
\Zerm_2} = \ocal(r)$. 
The claim is now a consequence of 
Lemma~\ref{lemma:ContMollWeightedNorm}.
\end{proof}

\begin{estimate}\label{est:F}
Let $F$ be defined in 
Lemma~\ref{lemma:DoublingArg}. Then
 \begin{equation*}
  \limsup_{\varepsilon,r_0 \downarrow 0} F \le
    C_\phi\norm{f}_{\mathrm{Lip}}
		\E{\int_0^T\norm{u(t)
		-\etat(t)}_{1,\phi}\xi_\gamma(t)\,dt}
    + \ocal\left(\delta\left(1 
		+ \frac{1}{r}\right) + r\right).
 \end{equation*}
\end{estimate}

\begin{proof}
 Observe that 
 \begin{equation}\label{eq:Fsplit}
  F = F_1 + F_2 + F_3,
 \end{equation}
 where
\begin{align*}
  F_1 &:= \E{\iiiint_{\Pi_T^2} 
S_\delta'(\ue-\etat)(f(\ue)-f(\etat))
(\nabla_x + \nabla_y)\test \,dX}, \\
  F_2 &:= -\E{\iiiint_{\Pi_T^2}  
\int_\ue^\etat S_\delta''(z-\ue)
(f(z)-f(\ue))\,dz \cdot \nabla_x\test\,dX}, \\
  F_3 &:= -\E{\iiiint_{\Pi_T^2}  
\int_\etat^\ue 
S_\delta''(z-\etat)
(f(z)-f(\etat))\,dz \cdot \nabla_y\test\,dX}.
\end{align*} 

 The decomposition \eqref{eq:Fsplit} 
follows from the identities
\begin{align*}
  Q_\delta(\ue,\etat) &= 
S_\delta'(\ue-\etat)(f(\ue)-f(\etat)) 
- \int_\etat^\ue 
S_\delta''(z-\etat)(f(z)-f(\etat))\,dz, \\
  Q_\delta(\etat,\ue) &= 
S_\delta'(\etat-\ue)(f(\etat)-f(\ue)) 
- \int_\ue^\etat 
S_\delta''(z-\ue)(f(z)-f(\ue))\,dz,
\end{align*}
derived using integration by parts. 

 Next, we claim that 
 \begin{equation}\label{eq:F2F3bound}
  \abs{F_2} + \abs{F_3}
  = \ocal\left(\delta
\left(1 + \frac{1}{r}\right)\right).
 \end{equation}
 We consider $F_2$; the $F_3$ term 
is estimated likewise. Note that  
 \begin{equation*}
  \abs{\int_\ue^\etat 
S_\delta''(z-\ue)(f(z)-f(\ue))\,dz} \le 
\norm{f}_\mathrm{Lip}\delta.
 \end{equation*}
 Hence,
 \begin{equation*}
  \abs{F_2} \le \norm{f}_\mathrm{Lip}\delta 
  \E{\iiiint_{\Pi_T^2} \abs{\nabla_x \test}\,dX}.
 \end{equation*}
 By a straightforward computation,
 \begin{equation*}
  \iiiint_{\Pi_T^2} \abs{\nabla_x \test}\,dX 
\le \frac{1}{2}T\left(C_\phi + \norm{\nabla J}_{L^1(\R^d)}\frac{1}{r}\right)\norm{\phi}_{L^1(\R^d)}.
 \end{equation*}
This proves \eqref{eq:F2F3bound}. 

 Next, we claim that 
 \begin{equation}\label{eq:LimitF1Bound}
  \limsup_{\varepsilon,r_0 \downarrow 0}F_1 \le 
	C_\phi\norm{f}_{\mathrm{Lip}}
	\E{\int_0^T\norm{u(t)
	-\etat(t)}_{1,\phi \ast J_r}\xi_\gamma(t)\,dt} 
	+ \ocal\left(\delta + r\right).
 \end{equation}
 Set
 \begin{equation*}
  \mathcal{F}_\delta(b,a) = 
	S_\delta'(b-a)(f(b)-f(a)).
 \end{equation*}
 Then
 \begin{equation*}
  \begin{split}
  \abs{\mathcal{F}_\delta(b,a)-\mathcal{F}_\delta(c,a)} 
	&= \big|\int_c^b \partial_z\big(S_\delta'(z-a)(f(z)-f(a))\big)\,dz\big| \\
	&\le 2\norm{f}_\mathrm{Lip}\delta 
	+ \norm{f}_\mathrm{Lip}\abs{b-c};
  \end{split}
 \end{equation*}
 whence
 \begin{multline*}
  \abs{\mathcal{F}_\delta(\ue(s,y),\etat(t,x))
	-\mathcal{F}_\delta(\ue(t,x),\etat(t,x))} \\
    \le \norm{f}_\mathrm{Lip}(2\delta 
	+ \abs{\ue(s,y)-\ue(t,y)} 
	+ \abs{\ue(t,y)-\ue(t,x)}),
 \end{multline*}
 and so
 \begin{align*}
  &\abs{F_1-\E{\iint_{\Pi_T} 
\mathcal{F}_\delta(\ue(t,x),\etat(t,x)) 
\cdot (\nabla 
\phi \ast J_r)(x)\xi_\gamma(t)\,dxdt}} \\
  &\le C_\phi\norm{f}_\mathrm{Lip}\E{\iint_{[0,T]^2}
\norm{\ue(s)-\ue(t)}_{1,\phi 
\star J_r}J_{r_0}^+(s-t)\xi_\gamma(t)\,dsdt} \\
  & \quad + C_\phi\norm{f}_\mathrm{Lip}
  \E{\int_0^T\iint_{\R^d \times \R^d}
  \abs{\ue(t,x+z)-\ue(t,x-z)}J_r(z)\xi_\gamma(t)\phi(x)\,dxdzdt} \\
  &\quad + 2\delta
\norm{f}_\mathrm{Lip}T
\norm{\nabla \phi}_{L^1(\R^d)},
 \end{align*}
 where we have made a change of variables 
as in Estimate~\ref{est:R}. 
Following the same reasoning 
as in that estimate we arrive at
 \begin{equation*}
  \limsup_{\varepsilon,r_0 \downarrow 0}F_1
    \leq 
    \E{\iint_{\Pi_T}\mathcal{F}_\delta(u(t,x),\etat(t,x)) 
	\cdot (\nabla \phi \ast
    J_r)(x)\xi_{\gamma}(t)\,dxdt}
   + \ocal\left(\delta + r\right).
 \end{equation*}
Inequality \eqref{eq:LimitF1Bound} 
follows from $\mathcal{F}_\delta(a,b) 
\le \norm{f}_\mathrm{Lip}\abs{a-b}$ and 
$\abs{\nabla \phi} \le C_\phi\phi$. 
Combining the above estimates for $F_1,F_2,F_3$ 
concludes the proof of the claim.
\end{proof}

\begin{estimate}\label{est:T1}
Let $\Term_1$ be defined in 
Lemma~\ref{lemma:DoublingArg}. Then
\begin{equation*}
  \abs{\Term_1} \le C\delta.
 \end{equation*}
\end{estimate}

\begin{proof}
Since $S_\delta'' = 2J_\delta$,
\begin{equation*}
  S_\delta''(\ue-\etat)(\sigma(\ue)
-\sigma(\etat))^2 \le 
2\norm{\sigma}_{\mathrm{Lip}}^2
J_\delta(\ue-\etat)\abs{\ue-\etat}^2 
\le 2\norm{\sigma}_{\mathrm{Lip}}^2
\norm{J}_\infty\delta.
 \end{equation*}
Due to \eqref{eq:symmetricMollExpr} 
and Young's inequality for convolutions,
 \begin{equation*}
  \iiiint_{\Pi_T^2}\test \,dX = 
\left(\int_0^T\int_0^T J_{r_0}^+(s-t)
\xi_\gamma(t)\,dsdt\right)\left(\int_{\R^d}
\phi \star J_r(x)\,dx\right) \le T\norm{\phi}_{L^1(\R^d)}.
 \end{equation*}
 The result follows.
\end{proof}

\begin{estimate}\label{est:T2}
Let $\Term_2$ be defined in 
Lemma~\ref{lemma:DoublingArg}. Then
 \begin{equation*}
  \lim_{r_0 \downarrow 0} \Term_2 = 0.
 \end{equation*}
\end{estimate}

\begin{proof}
This follows exactly as in \cite[Limit~5]{KarlsenStorroesten2014}. However, the 
assumption $\sigma \in L^\infty$ 
simplifies the analysis and allows for 
$\phi \in \mathfrak{N}$ instead of $C^\infty_c(\R^d)$.
\end{proof}
\begin{estimate}\label{est:T3}
 Let $\Term_3$ be defined in Lemma~\ref{lemma:DoublingArg}. Then
 \begin{equation*}
  \abs{\Term_3} \le C\frac{1}{\delta}
\E{\sum_{n = 0}^{N-1}\int_{t_n}^{t_{n+1}}
\norm{w^n-\vdt(t)}_{\phi \star J_r} \,dt}.
 \end{equation*}
\end{estimate}

\begin{proof}
Now, as $\etat = \psi + \vdt$,
\begin{equation*}
\abs{\int_\etat^{\psi+w}(\sigma(z)
-\sigma(\ue))\sigma'(z)\,dz} \leq 
2\norm{\sigma}_\infty\norm{\sigma}_{\mathrm{Lip}}
\abs{w-\vdt}.
\end{equation*}
Keep in mind that $w(t) = w^n$ for 
$t \in [t_n,t_{n+1})$. 
The estimate then follows 
from \eqref{eq:Sdelta-est} and \eqref{eq:symmetricMollExpr}.
\end{proof}

\begin{estimate}\label{est:T4}
Let $\Term_4$ be defined in 
Lemma~\ref{lemma:DoublingArg}. Then 
 \begin{equation*}
  \abs{\Term_4} \le C\sqrt{\Dt}
	\left(1 + \E{\int_0^T 
	\norm{\nabla w(t)}_{1,\phi \star J_r}}\right).
 \end{equation*}
\end{estimate}
\begin{proof}
 The estimate is established under 
the assumption that $\vdt$ is smooth in $x$. 
The general result follows by an 
approximation argument. 
Integrating by parts and using 
the chain rule,
 \begin{align*}
    \Term_4 & = \E{\iiiint_{\Pi_T^2} 
		\int_\ue^\etat S_\delta'(z-\ue)
		\left(f'(z-\psi)-f'(z)\right)\,dz \cdot 
		\nabla_x\test\,dX}\\
	    & = -\E{\iiiint_{\Pi_T^2} 
		S_\delta'(\etat-\ue)
		\left(f'(\vdt)-f'(\etat)\right) 
		\cdot \nabla_x \etat \,\test\,dX} \\
	    & \qquad + \E{\iiiint_{\Pi_T^2}
		\int_\ue^\etat 
		S_\delta'(z-\ue)f''(z-\psi)\,dz 
		\cdot \nabla_x\psi\,\test\,dX}.
 \end{align*}
 Next, we observe that 
 \begin{displaymath}
  \int_\ue^\etat S_\delta'(z-\ue)
	f''(z-\psi)\,dz = -\int_\ue^\etat 
	S_\delta''(z-\ue)f'(z-\psi)\,dz 
	+ S_\delta'(\etat-\ue)f'(\vdt).
 \end{displaymath}
Therefore,
\begin{align*}
\Term_4 &= \underbrace{
\E{\iiiint_{\Pi_T^2} 
S_\delta'(\etat-\ue)f'(\etat)\cdot 
\nabla_x \psi\,\test\,dX}}_{\Zerm_1} \\
&\qquad + \underbrace{\E{\iiiint_{\Pi_T^2} S_\delta'(\etat-\ue)
\left(f'(\etat)-f'(\vdt)\right) 
\cdot \nabla_x 
\vdt\, \test\,dX}}_{\Zerm_2},
 \end{align*}
cf.~\eqref{eq:FluxDiffEx}. 
Consider $\Zerm_2$. Since $\vdt(t)$ 
is $\fcal_{t_n}$-measurable for 
all $t \in [t_{n},t_{n+1})$,
 \begin{align*}
  \abs{\Zerm_2} &\leq \E{\iiiint_{\Pi_T^2} 	\abs{f'(\etat)-f'(\vdt)}\abs{\nabla_x \vdt} 
	\test\,dX} \\ &
	\leq \norm{f'}_{\mathrm{Lip}}
	\sum_{n = 0}^{N-1}
	\iiiint_{\Pi_T \times \Pi_n} 
	\E{\E{\abs{\psi}\Big |\fcal_{t_n}}
	\abs{\nabla_x \vdt}} \test\,dX.
\end{align*}
By definition,
 \begin{equation}\label{eq:psiEquationSecEst}
  \psi(t,x) = \int_{t_n}^t 
	\sigma(\psi(r,x) + w^n(x))\,dB(r), 
	\qquad t_n \le t < t_{n+1}.
 \end{equation}
Set
\begin{displaymath}
 \tilde{\psi}(t,\lambda) = \int_{t_n}^t \sigma(\tilde{\psi}(r,\lambda) + \lambda)\,dB(r), 
\end{displaymath}
so that $\psi(t,x) = \tilde{\psi}(t,w^n(x))$. Consequently,
\begin{displaymath}
 \E{\abs{\psi(t,x)}\Big| \fcal_{t_n}}(\omega) = \E{|\tilde{\psi}(t,\lambda)|}_{\lambda = w^n(x;\omega)}.
\end{displaymath}
By the Burkholder-Davies-Gundy inequality, there is a constant 
$c_1 > 0$ such that 
 \begin{displaymath}
  \E{|\tilde{\psi}(t,\lambda)|} 
	\leq c_1\E{\left(\int_{t_n}^t \sigma^2(\tilde{\psi}(r,\lambda) 
	+ \lambda)\,dr\right)^{1/2}} 
	\leq c_1\norm{\sigma}_\infty \sqrt{t-t_n},
 \end{displaymath}
independent of $\lambda \in \R$. It follows from 
Proposition~\ref{prop:fractional-BV} that 
 \begin{displaymath}
  \abs{\Zerm_2} \leq c_1\norm{\sigma}_\infty\norm{f'}_{\mathrm{Lip}}\sqrt{\Dt}
  \E{\int_0^T \norm{\nabla_x \vdt(t)}_{1,\phi \star J_r}\,dt} \leq C\sqrt{\Dt}.
 \end{displaymath}
 Consider $\Zerm_1$. In view of \eqref{eq:symmetricMollExpr},
 \begin{equation*}
    \abs{\Zerm_1} \le \norm{f}_{\mathrm{Lip}}
    \E{\iiiint_{\Pi_T^2}\abs{\nabla_x\psi}\test\,dX} \le \norm{f}_{\mathrm{Lip}}
\E{\iint_{\Pi_T}
\abs{\nabla_x\psi}(\phi \star J_r)\,dxdt}.
 \end{equation*}
 Differentiating \eqref{eq:psiEquationSecEst} yields, 
for $t_n \le t < t_{n+1}$,
\begin{equation*}
  \nabla_x \psi(t,x) = \int_{t_n}^t 
\sigma'(\psi(r,x) + w^n(x))(\nabla_x\psi(r,x) 
+ \nabla_x w^n(x))\,dB(r).
\end{equation*}
By Lemma~\ref{lemma:SmallTimeStepItoEst} below 
there is a constant $C > 0$, depending 
only on $\sigma$, such that 
\begin{equation*}
  \E{\abs{\nabla_x\psi(t,x)}} \le C \sqrt{t-t_n}	\E{\abs{\nabla w^n(x)}}, 
	\qquad t_n \le t < t_{n+1}.
\end{equation*}
 We conclude that 
 \begin{equation*}
  \abs{\Zerm_1} \le C\left(\E{\int_0^T
  \norm{\nabla w(t)}_{1,\phi \ast J_r}}\,dt\right)\sqrt{\Dt}.
 \end{equation*}
\end{proof}

\begin{lemma}\label{lemma:SmallTimeStepItoEst} 
Suppose $h:[t_n,t_{n+1}] 
\times \Omega \rightarrow \R^d$ 
is predictable and 
\begin{equation*}
  P\Bigl[\int_{t_n}^t 
\abs{h(s)}^2\,ds < \infty \Bigr] = 1.
 \end{equation*}
Suppose $X(t_n) \in L^p(\Omega,\fcal_{t_n},P;\R^d)$, 
$1 \le p < \infty$, 
and let $X:[t_n,t_{n+1}] \times \Omega \rightarrow \R^d$ satisfy
\begin{equation*}
  X(t) = X(t_n) 
	+ \int_{t_n}^t h(s)\,dB(s), 
	\qquad t \in [t_n,t_{n+1}].
 \end{equation*}
Suppose there exist a constant $K$ and $Y \in L^p(\Omega,\fcal_{t_n},P)$ such that
\begin{equation}\label{eq:hLinGrowthAndLip}
  \abs{h(t;\omega)} \le Y(\omega) + K\abs{X(t)}, \qquad t \in [t_n,t_{n+1}].
\end{equation}
Then, for all $t \in [t_n,t_{n+1}]$ and $\beta > p(c_p^{1/p}K)^2/2$,
\begin{equation*}
  \sup_{t_n \le s \le t}\E{\abs{X(s)}^p}^{1/p} \le C(\beta)
  e^{\beta(t-t_n)}\left(\E{\abs{X(t_n)}^p}^{1/p} + c_p^{1/p}\sqrt{t-t_n}\E{\abs{Y}^p}^{1/p}\right),
\end{equation*}
where $C(\beta) = \left(1-c_p^{1/p}K\sqrt{p/2\beta}\right)^{-1}$ 
and $c_p$ is the constant from the Burkholder-Davies-Gundy inequality.
\end{lemma}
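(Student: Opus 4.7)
The approach is to combine the Burkholder-Davies-Gundy (BDG) inequality with a weighted $L^\infty$-in-time closure, reducing the desired $L^p$-bound on $X$ to a linear algebraic inequality.

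First, I would apply BDG to the martingale $s \mapsto \int_{t_n}^s h(r)\,dB(r)$, producing
\begin{equation*}
\E{\sup_{t_n \le s \le t}\abs{X(s)-X(t_n)}^p}^{1/p}
\le c_p^{1/p}\,\E{\Bigl(\int_{t_n}^t \abs{h(s)}^2\,ds\Bigr)^{p/2}}^{1/p}.
\end{equation*}
Minkowski's integral inequality (for $p \ge 2$; the range $1\le p<2$ is recovered from the $p=2$ case via Jensen's inequality) bounds the right-hand side by $c_p^{1/p}(\int_{t_n}^t \E{\abs{h(s)}^p}^{2/p}\,ds)^{1/2}$. Assumption \eqref{eq:hLinGrowthAndLip} combined with the triangle inequality in $L^p(\Omega)$ gives $\E{\abs{h(s)}^p}^{1/p}\le a+K\phi(s)$, where $a:=\E{\abs{Y}^p}^{1/p}$, $A:=\E{\abs{X(t_n)}^p}^{1/p}$, and $\phi(s):=\E{\abs{X(s)}^p}^{1/p}$. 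One further application of Minkowski in $L^2([t_n,t];\,ds)$ then yields
\begin{equation*}
\phi(t) \le A + c_p^{1/p}a\sqrt{t-t_n} + c_p^{1/p}K\Bigl(\int_{t_n}^t \phi(s)^2\,ds\Bigr)^{1/2}.
\end{equation*}

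Second, to close this into an explicit bound I would introduce
\begin{equation*}
M(t):= \sup_{t_n \le s \le t} e^{-\beta(s-t_n)}\phi(s),
\end{equation*}
so that $\phi(s) \le e^{\beta(s-t_n)}M(t)$ on $[t_n,t]$ and consequently $\int_{t_n}^t \phi(s)^2\,ds \le M(t)^2 e^{2\beta(t-t_n)}/(2\beta)$. Substituting this into the integral inequality, multiplying through by $e^{-\beta(t-t_n)}$, and then taking $\sup_{s\in[t_n,t]}$ on the left reduces everything to the linear algebraic form
\begin{equation*}
M(t) \le A + c_p^{1/p}a\sqrt{t-t_n} + \frac{c_p^{1/p}K}{\sqrt{2\beta}}\,M(t).
\end{equation*}
For $\beta$ above the critical threshold, so that $c_p^{1/p}K/\sqrt{2\beta}<1$, this rearranges to an explicit bound on $M(t)$, and the claim then follows from $\phi(t)\le e^{\beta(t-t_n)}M(t)$.

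The main technical obstacle is guaranteeing the \emph{a priori} finiteness of $\phi(t)$, which is required to rigorously rearrange the linear inequality for $M(t)$. This is handled by the standard localization trick: one runs the argument for the stopped process $X(\cdot\wedge\tau_N)$ with $\tau_N:=\inf\{s\ge t_n : \abs{X(s)}>N\}$, which is bounded by construction, obtains the bound uniformly in $N$, and passes to the limit $N\to\infty$ by Fatou's lemma. The precise numerical prefactors appearing in the statement (in particular the exact dependence on $p$ in the critical threshold for $\beta$) are determined by the specific normalization conventions used for BDG and Minkowski's integral inequality.
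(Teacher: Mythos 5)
Your proof is essentially the same as the paper's: apply BDG, use the linear growth bound and Minkowski's integral inequality to reduce to a scalar integral inequality, and close it by introducing an exponentially-weighted sup norm. The only structural difference is cosmetic — the paper weights by $e^{-\beta(s-t_n)/p}$ where you weight by $e^{-\beta(s-t_n)}$, which merely rescales $\beta$ (and accounts for the extra factor $p$ in the paper's threshold $\beta>p(c_p^{1/p}K)^2/2$); you also apply Minkowski in a slightly different order (first in $L^p(\Omega)$, then in $L^2(dt)$, versus the paper's single step in $L^p(\Omega;L^2(dt))$), but the resulting inequality is the same.

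Two remarks on your added commentary. Your observation that Minkowski's integral inequality needs $p\ge 2$ here (since it is applied with exponent $p/2$) is correct and is a point the paper silently passes over, so it is good that you noticed it. However, your proposed remedy for $1\le p<2$ — Jensen from the $p=2$ case — does not close the gap as stated: Jensen lets you dominate $\E{\abs{X(s)}^p}^{1/p}$ by $\E{\abs{X(s)}^2}^{1/2}$, but the resulting right-hand side would involve $\E{\abs{X(t_n)}^2}^{1/2}$ and $\E{\abs{Y}^2}^{1/2}$, which are not controlled by the $L^p$-integrability hypotheses when $p<2$. (In the paper's actual application, Estimate~\ref{est:T4}, the lemma is used with $p=1$, so this is a genuine loose end, but your fix would need a different argument — e.g.\ a localization to bounded $X$ and then a direct $L^1$ Gronwall via the BDG inequality with $p=1$.) Finally, your localization-by-stopping-times remark to justify the a priori finiteness needed to rearrange the self-referential inequality is a useful point that the paper does not spell out.
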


\begin{proof}
Set
\begin{equation*}
 \norm{X}_{\beta,p,\tau} := \left(\sup_{t_n \le t 
 \le \tau}e^{-\beta(t-t_n)}\E{\abs{X(t)}^p}\right)^{1/p}.
\end{equation*}
The triangle inequality yields
\begin{equation*}
 \E{\abs{X(t)}^p}^{1/p} 
 \le \E{\abs{\int_{t_n}^t h(s)\,dB(s)}^p}^{1/p} 
+ \E{\abs{X(t_n)}^p}^{1/p}.
\end{equation*}
By the Burkholder-Davies-Gundy inequality,
\begin{equation*}
 \E{\abs{\int_{t_n}^t h(s)\,dB(s)}^p}^{1/p} \le c_p^{1/p}
\E{\left(\int_{t_n}^t h^2(s)\,ds\right)^{p/2}}^{1/p}.
\end{equation*}
Due to \eqref{eq:hLinGrowthAndLip} and the 
triangle inequality on $L^p(\Omega;L^2([t_n,t]))$,
\begin{displaymath}
 \E{\left(\int_{t_n}^t \abs{h(s)}^2\,ds\right)^{p/2}}^{1/p} 
 \leq \sqrt{t-t_n}\E{\abs{Y}^p}^{1/p} + K\E{\left(\int_{t_n}^t\abs{X(s)}^2\,ds\right)^{p/2}}^{1/p}.
\end{displaymath}
By Minkowski's integral inequality,
\begin{displaymath}
 \E{\left(\int_{t_n}^t\abs{X(s)}^2\,ds\right)^{p/2}}^{2/p} \leq \int_{t_n}^t\E{\abs{X(s)}^p}^{2/p}\,ds.
\end{displaymath}
Furthermore, 
\begin{align*}
 \int_{t_n}^t\E{\abs{X(s)}^p}^{2/p}\,ds 
     &= e^{2\beta(t-t_n)/p}\int_{t_n}^t\left(e^{-\beta(t-s)}e^{-\beta(s-t_n)}\E{\abs{X(s)}^p}\right)^{2/p}\,ds \\
     &\leq e^{2\beta(t-t_n)/p}\norm{X}_{\beta,p,t}^2\int_{t_n}^te^{-2\beta(t-s)/p}\,ds \\
     &=\frac{p}{2\beta}\left(e^{2\beta(t-t_n)/p}-1\right)\norm{X}_{\beta,p,t}^2.
\end{align*}
Summarizing, we arrive at 
\begin{align*}
 \E{\abs{X(t)}^p}^{1/p} &\leq \E{\abs{X(t_n)}^p}^{1/p} + c_p^{1/p}\sqrt{t-t_n}\E{\abs{Y}^p}^{1/p} \\
&\quad +c_p^{1/p}K\sqrt{\frac{p}{2\beta}}
\left(e^{2\beta(t-t_n)/p}-1\right)^{1/2}\norm{X}_{\beta,p,t}.
\end{align*}
Multiplying by $e^{-\beta(t-t_n)/p}$ and taking the supremum 
over $t_n \leq t \leq \tau$, we obtain
\begin{equation*}
 \norm{X}_{\beta,p,\tau} \le \E{\abs{X(t_n)}^p}^{1/p} 
+ c_p^{1/p}\sqrt{\tau-t_n}\E{\abs{Y}^p}^{1/p} 
+ c_p^{1/p}K\sqrt{\frac{p}{2\beta}}\norm{X}_{\beta,p,\tau}.
\end{equation*}
Choosing $\beta$ sufficiently large, i.e.
$c_p^{1/p}K\sqrt{p/2\beta} < 1$, we secure the bound
\begin{equation*}
\norm{X}_{\beta,p,\tau} \le 
\frac{1}{1-c_p^{1/p}K\sqrt{p/2\beta}}
\left(c_p^{1/p}\sqrt{\tau-t_n}\E{\abs{Y}^p}^{1/p} 
+ \E{\abs{X(t_n)}^p}^{1/p}\right).
\end{equation*}
The result follows upon multiplication by $e^{\beta(\tau-t_n)/p}$, since
\begin{equation*}
 e^{\beta(\tau-t_n)/p}\norm{X}_{\beta,p,\tau} 
= \left(\sup_{t_n \le t \le \tau}
e^{\beta(\tau-t)}\E{\abs{X(t)}^p}\right)^{1/p} 
\ge \sup_{t_n \le t \le \tau}\E{\abs{X(t)}^p}^{1/p}.
\end{equation*}
\end{proof}
\begin{estimate}\label{est:T5}
 Let $\Term_5$ be defined in 
Lemma~\ref{lemma:DoublingArg}. Then
 \begin{equation*}
  \Term_5 = \ocal(\varepsilon).
 \end{equation*}
\end{estimate}
\begin{proof}
 This follows as in \cite[Limit~6]{KarlsenStorroesten2014}.
\end{proof}
\begin{estimate}\label{est:T6}
 Let $\Term_6$ be defined in Lemma~\ref{lemma:DoublingArg}. Then
 \begin{equation*}
  \abs{\Term_6} \le 2\sum_{n=0}^{N-1}
\E{\norm{\scl(\Dt)u^n-w^n}_{1,\phi \star J_r}}.
 \end{equation*}
\end{estimate}

\begin{proof}
 First, we note that $\abs{S_\delta(b)-S_\delta(a)} 
\le \abs{b-a}$. This and \eqref{eq:symmetricMollExpr} yields  
 \begin{equation*}
  \abs{\Term_6} \le \sum_{n=0}^{N-1}
\E{\norm{\etat(t_{n+1})-\etat((t_{n+1})-)}_{1,\phi \star J_r}}.
 \end{equation*}
 Since
 \begin{equation*}
  \etat(t_{n+1})-\etat((t_{n+1})-) = 
	\ssde(t_{n+1},t_n)(\scl(\Dt)u^n-w^n) + \scl(\Dt)u^n-w^n,
 \end{equation*}
 the result follows from \eqref{eq:SSDE1Contraction}.
\end{proof}

\begin{proof}[Proof of Theorem~\ref{theorem:ConRate}]
Consider Lemma~\ref{lemma:DoublingArg}, and 
take the upper limits in \eqref{eq:DoublingIneqErrEst} as 
$r_0 \downarrow 0, \varepsilon \downarrow 0$, 
and $\gamma \downarrow 0$ (in that order). 
Next we recall that $w^n = w^n_k$. Letting 
$k \rightarrow \infty$, $w^n_k \rightarrow \scl(\Dt)u^n$ 
in $L^1(\Omega,L^1(\R^d,\phi))$. Due to the $L^1$-Lipschitz 
continuity of $\scl$ (cf. Proposition~\ref{prop:time-cont}) 
and the uniform $BV$-bound on the splitting approximation, it 
follows from Estimates \ref{est:L}--\ref{est:T6} that 
\begin{multline*}
  \E{\norm{u_0-u^0}_{1,\phi}} + 
	C_\phi\norm{f}_\mathrm{Lip} \int_0^{t_0} 
	\E{\norm{\eta_\Dt(t)-u(t)}_{1,\phi}}\,dt \\
  + \ocal\left(\delta + r + \sqrt{\Dt} +\frac{\delta}{r} 
	+ \frac{\Dt}{\delta}\right) \ge \E{\norm{\eta_\Dt(t_0)-u(t_0)}_{1,\phi}}.
\end{multline*}
Finally, we apply Gr\"onwall's inequality, and then 
choose $\delta = \Dt^{2/3}$ and $r = \Dt^{1/3}$.
\end{proof}

\section{Appendix}\label{sec:appendix}

\subsection{Proof of Proposition 
\ref{proposition:SumIsEntropySol}}\label{sec:ProofofSumIsEntropy}
The proof of Proposition \ref{proposition:SumIsEntropySol} 
is based on the following result.
\begin{lemma}\label{lemma:SumIsEntropySol}
Suppose $u,w \in L^2(\Omega,P,\fcal_{t_n};L^2(\R^d))$ 
and $w$ is smooth. Set 
 \begin{equation*}
  \psi(t) = (\ssde(t,t_n)-\mathcal{I})w,
	\quad v(t) = \scl(t-t_n)u, 
	\qquad
	t \in [t_n,t_{n+1}].
\end{equation*}
Then for all 
$(S,Q) \in \mathscr{E}$, all nonnegative 
$\test \in C^\infty_c(\Pi_n^2)$, and 
all $V \in \mathcal{S}$,
 \begin{equation*}
  R - L + \Term_1 + \Term_2 - \Term_3 + \Term_4 \ge 0,
 \end{equation*}
 where
 \begin{align*}
  \begin{split}
   L &= \E{\iint_{\Pi_n}\int_{\R^d} S(v(t_{n+1},x)
	+\psi(s,y)-V)\test(t_{n+1},x,s,y)\,dxdyds} \\
     & \quad + \E{\iint_{\Pi_n}\int_{\R^d} S(v(t,x) 
	+ \psi(t_{n+1},y)-V)\test(t,x,t_{n+1},y)\,dydxdt},
  \end{split} \\
  \begin{split}
   R &= \E{\iint_{\Pi_n}\int_{\R^d} S(v(t_n,x)
	+\psi(s,y)-V)\test(t_n,x,s,y)\,dxdyds} \\
     & \quad + \E{\iint_{\Pi_n}\int_{\R^d} S(v(t,x)
	+ \psi(t_n,y)-V)\test(t,x,t_n,y) \,dydxdt},
  \end{split}\\
  \Term_1 &= \E{\iiiint_{\Pi_n^2} S(v(t,x)+\psi(s,y)-V)(\partial_t 
	+ \partial_s)\test \dX}, \\
  \Term_2 &= \E{\iiiint_{\Pi_n^2} Q(v(t,x),V
	-\psi(s,y))\cdot \nabla_x\test \dX}, \\
  \Term_3 &= \E{\iiiint_{\Pi_n^2}S''(v(t,x) 
	+ \psi(s,y)-V)D_sV \sigma(\psi(s,y) + w(y))\test\,dX}, \\
  \Term_4 &= \frac{1}{2}\E{\iiiint_{\Pi_n^2}S''(v(t,x) 
	+ \psi(s,y)-V)\sigma^2(\psi(s,y) + w(y))\test\,dX},
 \end{align*}
 and $\Pi_n = [t_n,t_{n+1}] \times \R^d$.
\end{lemma}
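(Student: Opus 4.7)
My plan is to derive the inequality by combining, on the $(t,x)$ side, the pathwise Kru{\v{z}}kov entropy inequality for $v=v(t,x)$, with, on the $(s,y)$ side, the anticipating It\^o formula of Theorem~\ref{theorem:AntIto} applied to $\psi=\psi(s,y)$. The key structural observation is that, since $u \in L^2(\Omega,\fcal_{t_n},P;L^2(\R^d))$, for each $\omega$ the function $v(\cdot,\cdot;\omega) = \scl(\cdot-t_n)u(\omega)$ is the deterministic entropy solution of \eqref{eq:CL}, while $\psi(s,y) = \ssde(s,t_n)w(y)-w(y)$ satisfies the SDE $d\psi(s,y) = \sigma(\psi(s,y)+w(y))\,dB(s)$ with $\psi(t_n,y)=0$ (using \eqref{eq:sigma-ass-xindep}). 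Summing the two resulting (in)equalities will produce exactly the six summands in the statement.

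\textbf{Step (A): the $(t,x)$ contribution.} For any frozen $(s,y,\omega)$, the real number $c := V(\omega)-\psi(s,y,\omega)$ is an admissible Kru{\v{z}}kov constant for the pathwise entropy solution $v(\cdot,\cdot;\omega)$. The pathwise entropy inequality for the entropy pair $(S,Q)\in\mathscr{E}$ applied to the slice $\test(\cdot,\cdot,s,y)\in C^\infty_c(\Pi_n)$ gives
\begin{align*}
&\int_{\R^d} S(v(t_n,x)-c)\test(t_n,x,s,y)\,dx - \int_{\R^d} S(v(t_{n+1},x)-c)\test(t_{n+1},x,s,y)\,dx \\
&\quad + \iint_{\Pi_n}\bigl[S(v-c)\partial_t\test + Q(v,c)\cdot\nabla_x\test\bigr]\,dxdt \;\ge\; 0.
\end{align*}
I would then take expectation and integrate in $(s,y)\in\Pi_n$; Fubini is legitimate thanks to the Lipschitz bounds on $S$, $Q$ and the $L^2$-integrability of $v,\psi,V$. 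Reinserting $c = V-\psi(s,y)$ contributes the first summand of $R$, minus the first summand of $L$, the $\partial_t\test$ part of $T_1$, and all of $T_2$.

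\textbf{Step (B): the $(s,y)$ contribution.} For each fixed $(t,x,y)$, I apply Theorem~\ref{theorem:AntIto} to $F(\zeta,\lambda,s) := S(v(t,x)+\zeta-\lambda)\test(t,x,s,y)$ with $X(s)=\psi(s,y)$, $\lambda=V$, and integrand $\sigma(\psi(s,y)+w(y))$, exactly as in the proof of Lemma~\ref{lemma:EntOpSplit}. Using $\psi(t_n,y)=0$, this yields the pointwise identity
\begin{align*}
& S(v+\psi(t_{n+1},y)-V)\test(t,x,t_{n+1},y) - S(v-V)\test(t,x,t_n,y) \\
&= \int_{t_n}^{t_{n+1}} S'(v+\psi-V)\sigma(\psi+w)\test\,\delta B(s) + \int_{t_n}^{t_{n+1}} S(v+\psi-V)\partial_s\test\,ds \\
&\quad - \int_{t_n}^{t_{n+1}} S''(v+\psi-V)D_sV\,\sigma(\psi+w)\test\,ds + \tfrac{1}{2}\int_{t_n}^{t_{n+1}} S''(v+\psi-V)\sigma^2(\psi+w)\test\,ds.
\end{align*}
Taking expectation (the Skorohod integral has zero mean under the available integrability) and integrating in $(t,x,y)\in\Pi_n\times\R^d$ produces an \emph{equality} of the form $L_2-R_2 = T_1^{(s)} - T_3 + T_4$, where $L_2,R_2$ are the second summands of $L,R$ and $T_1^{(s)}$ is the $\partial_s\test$ piece of $T_1$. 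Adding to the inequality from Step~(A) gives the desired $R - L + T_1 + T_2 - T_3 + T_4 \ge 0$.

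\textbf{Main obstacle.} The delicate point is the rigorous passage from the pathwise entropy inequality for $v$ with a fixed real constant to one with the jointly measurable, $(s,y,\omega)$-dependent ``constant'' $c = V-\psi(s,y)$, together with the exchange of expectation and the $(t,x,s,y)$-integration. Because the inequality holds $\omega$-wise for every $c \in \R$ and all integrands are $L^1$-dominated via the $L^2$-bounds on $v,\psi,V$ and the Lipschitz bounds on $S,Q,\sigma$, both the substitution and the application of Fubini are routine; nevertheless, this measurability/integrability bookkeeping is the only place where nontrivial care is required.
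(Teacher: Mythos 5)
Your proposal matches the paper's proof essentially step for step: substitute $c=V-\psi(s,y)$ into the pathwise Kru{\v{z}}kov inequality for $v$ and integrate in $(s,y)$, apply the anticipating It\^o formula (Theorem~\ref{theorem:AntIto}) to $S(v(t,x)+\psi(s,y)-V)\test$ in the $s$-variable using the SDE $\psi(s,y)=\int_{t_n}^s\sigma(\psi(r,y)+w(y))\,dB(r)$, then add; the only cosmetic difference is that the paper takes $X(s)=v(t,x)+\psi(s,y)$ while you absorb $v(t,x)$ into $F$ and set $X(s)=\psi(s,y)$, which is the same computation. Your sign bookkeeping and the identification of the six summands are correct.
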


\begin{proof}[Proof of Lemma~\ref{lemma:SumIsEntropySol}]
 The entropy inequality reads
 \begin{multline}\label{eq:EntIneqw}
    \int_{\R^d} S(v(t_n,x)-c)\test(t_n,x,s,y)-S(v(t_{n+1},x)-c)
	\test(t_{n+1},x,s,y)\,dx \\
    + \iint_{\Pi_n} S(v-c)\partial_t\test + Q(v,c) 
	\cdot \nabla_x\test\,dtdx \ge 0,
 \end{multline}
for all $c \in \R$ and all $s,y \in \Pi_n$. 
Specify $c = V-\psi(s,y)$ in 
\eqref{eq:EntIneqw}, integrate in $(s,y)$, and take 
expectations, to obtain
 \begin{equation}\label{eq:EntIneqwSubsInt}
  \begin{split}
  &\E{\iint_{\Pi_n}\int_{\R^d} S(v(t_n,x)
	+\psi(s,y)-V)\test(t_n,x,s,y) \,dxdsdy} \\
  &\quad -\E{\iint_{\Pi_n}\int_{\R^d} 
	S(v(t_{n+1},x)+\psi(s,y)-V)\test(t_{n+1},x,s,y)\,dxdsdy} \\
  & \quad + \E{\iiiint_{\Pi_n^2} S(v+\psi-V)\partial_t\test 
	+ Q(v,V-\psi) \cdot \nabla_x\test\,dX} \ge 0.
  \end{split}
 \end{equation}
Note that $v(t)$ is $\fcal_{t_n}$-adapted 
for all $t \in [t_n,t_{n+1}]$. To reveal the 
equation satisfied by $\psi$, let 
$\zeta(t) = \ssde(t,t_n)w$. By definition,
 \begin{equation*}
  \zeta(t,x) = w(x) + \int_{t_n}^t \sigma(\zeta(r,x))\,dB(r).
 \end{equation*}
 Since $\psi(t) = \zeta(t) - w$, 
 \begin{equation}\label{eq:psiEquation}
  \psi(t,x) = \int_{t_n}^t \sigma(\psi(r,x) + w(x))\,dB(r), 
	\qquad t \in [t_n,t_{n+1}].
 \end{equation}
 Fix $t,x \in \Pi_n, y \in \R^d$ and set
 \begin{equation*}
  X(s) := v(t,x) + \psi(s,y), \quad 
	F(X(s),V,s) := S(X(s)-V)\test(t,x,s,y), \qquad s \in [t_n,t_{n+1}].
 \end{equation*}
 By \eqref{eq:psiEquation},
 \begin{equation*}
  X(s) = v(t,x) + \int_{t_n}^s 
	\sigma(\psi(r,y) + w(y))\,dB(r).
 \end{equation*}
 By Theorem~\ref{theorem:AntIto},
 \begin{align*}
  S(X(t_{n+1})-V)&\test(t,x,t_{n+1},y) = S(X(t_n)-V)\test(t,x,t_n,y) \\ 
	     & + \int_{t_n}^{t_{n+1}}S(X(s)-V)
			\partial_s \test \,ds \\
	     & + \int_{t_n}^{t_{n+1}} 
			S'(X(s)-V)\sigma(\psi(s) + w)\test\,dB(s) \\
	     & - \int_{t_n}^{t_{n+1}}S''(X(s)-V)
			D_sV \sigma(\psi(s) + w)\test\,ds \\
	     & + \frac{1}{2}
			\int_{t_n}^{t_{n+1}}S''(X(s)-V)
			\sigma^2(\psi(s) + w)\test\,ds,
 \end{align*}
where the stochastic integral is interpreted as a Skorohod integral. 
Upon integrating in $t,x,y$ and taking expectations, 
 \begin{equation}\label{eq:EntExprSDE}
  \begin{split}
    & \E{\iint_{\Pi_n} \int_{\R^d} S(v(t,x) 
		+ \psi(t_n,y)-V)\test(t,x,t_n,y) \,dydtdx} \\ 
    & \quad-\E{\iint_{\Pi_n} \int_{\R^d} S(v(t,x) 
		+ \psi(t_{n+1},y)-V)\test(t,x,t_{n+1},y)\,dydtdx} \\
    & \quad+ \E{\iiiint_{\Pi_n^2}S(v(t,x) 
		+ \psi(s,y)-V)\partial_s \test \,dX} \\
    & \quad+ \frac{1}{2}\E{\iiiint_{\Pi_n^2}S''(v(t,x) 
		+ \psi(s,y)-V)(\sigma(\psi(s,y) 
		+ w(y)))^2\test\,dX} \\
    & \quad- \E{\iiiint_{\Pi_n^2}S''(v(t,x) 
		+ \psi(s,y)-V)D_sV \sigma(\psi(s,y) + w(y))
		\test\,dX} = 0.
  \end{split}
 \end{equation}
Adding \eqref{eq:EntIneqwSubsInt} and 
\eqref{eq:EntExprSDE} concludes the proof.
\end{proof}

\begin{proof}[Proof of Proposition~\ref{proposition:SumIsEntropySol}]
We use
\begin{equation}\label{eq:DefOfTestFunc}
  \test(t,x,s,y) = \frac{1}{2^d}\phi
	\left(\frac{t+s}{2},\frac{x+y}{2}\right)
	J_r\left(\frac{x-y}{2}\right)J_{r_0}(t-s)
\end{equation}
in Lemma~\ref{lemma:SumIsEntropySol} and 
then send $r_0, r$ to zero (in that order). 
The sought result for $V \in \mathcal{S}$ is a consequence of 
Limits \ref{limit:LR}--\ref{limit:I4} below. The extension 
to $V \in \D^{1,2}$ follows by an approximation 
argument as in \cite[Lemma~2.2]{KarlsenStorroesten2014}.  
\end{proof}

\begin{limit}\label{limit:LR}
Let $L, R$ be defined in 
Lemma~\ref{lemma:SumIsEntropySol} 
and $\test$ in \eqref{eq:DefOfTestFunc}. Then
 \begin{equation*}
  \begin{split}
  \lim_{r,r_0 \downarrow 0} L(r,r_0) &= 
	\E{\int_{\R^d} 
	S(v(t_{n+1},x)+\psi(t_{n+1},x))
	\phi(t_{n+1},x)\,dx}, \\
  \lim_{r,r_0 \downarrow 0} R(r,r_0) &
	= \E{\int_{\R^d} S(v(t_n,x)+\psi(t_n,x)-V)
	\phi(t_n,x)\,dx}.
\end{split}
\end{equation*}
\end{limit}

\begin{proof}
 Let us only consider the term
 \begin{equation*}
  \E{\iint_{\Pi_n}\int_{\R^d} 
	S(v(t_{n+1},x)+\psi(s,y)-V)
	\test(t_{n+1},x,s,y)\,dxdyds} =: \Zerm.
 \end{equation*}
The remaining terms can be treated 
in the same way. As a consequence 
of the dominated convergence theorem and 
Lemma~\ref{lemma:DoubLebCon}, 
 \begin{align*}
  &\lim_{r_0 \downarrow 0}\Zerm = 
	\frac{1}{2}E\Bigg[\int_{\R^d}
	\int_{\R^d} S(v(t_{n+1},x)+\psi(t_{n+1},y)-V) \\
  & \hphantom{XXXXXXXXXXXXXX}\times 
	\frac{1}{2^d}\phi\left(t_{n+1},\frac{x+y}{2}\right)
	J_r\left(\frac{x-y}{2}\right)\,dxdy\Bigg].
 \end{align*}
Moreover,
\begin{equation*}
  \lim_{r,r_0 \downarrow 0} \Zerm = 
	\frac{1}{2}\E{\int_{\R^d} 
	S(v(t_{n+1},x)+\psi(t_{n+1},x)-V)\phi(t_{n+1},x)}.
\end{equation*}
\end{proof}

\begin{limit}\label{limit:I1}
Let $\Term_1$ be defined in 
Lemma~\ref{lemma:SumIsEntropySol} 
and $\test$ in \eqref{eq:DefOfTestFunc}. Then
\begin{equation*}
\lim_{r,r_0 \downarrow 0} 
\Term_1 = \E{\iint_{\Pi_n} 
S(u(t,x)-V)\partial_t\phi(t,x) \,dxdt}.
\end{equation*}
\end{limit}

\begin{proof}
Observe that 
 \begin{equation*}
  (\partial_t + \partial_s)
	\test(t,x,s,y) = \frac{1}{2^d}
	\partial_1\phi	\left(\frac{t+s}{2},\frac{x+y}{2}\right)
	J_r\left(\frac{x-y}{2}\right)J_{r_0}(t-s).
 \end{equation*}
The result follows by the 
dominated convergence theorem and 
Lemma \ref{lemma:DoubLebCon}, consult the 
proof of Limit \ref{limit:LR}.
\end{proof}

\begin{limit}\label{limit:I2}
Let $\Term_2$ be defined in 
Lemma \ref{lemma:SumIsEntropySol} 
and $\test$ in \eqref{eq:DefOfTestFunc}. Then
 \begin{align*}
  \lim_{r,r_0 \downarrow 0} \Term_2 &
	= \E{\iint_{\Pi_n} Q(v+\psi,V)\cdot 
	\nabla\phi\,dxdt} \\
	&\quad +\E{\iint_{\Pi_n}
	\left(\int_V^{v + \psi}S'(z-V)
	\left(f'(z-\psi)-f'(z)\right)\,dz\right)\cdot 
	\nabla\phi\,dxdt} \\
	& \quad +\E{\iint_{\Pi_n}
	\left(\int_V^{v+\psi}
	S''(z-V)f'(z-\psi)\,dz\right)
	\cdot \nabla\psi\,\phi\,dxdt}.
 \end{align*}
\end{limit}

\begin{proof}
First observe that  
\begin{displaymath}
  (\nabla_x +\nabla_y)\test(t,x,s,y) 
	= \frac{1}{2^d}\partial_2\phi
	\left(\frac{t+s}{2},\frac{x+y}{2}\right)
	J_r\left(\frac{x-y}{2}\right)J_{r_0}(t-s).
\end{displaymath}
Integration by parts results in
\begin{align*}
   \Term_2&=E\Bigg[\iiiint_{\Pi_n^2} 
	Q(v(t,x),V-\psi(s,y)) \\
	& \hphantom{XXXXXXXX} \cdot 
	\frac{1}{2^d}\partial_2
	\phi\left(\frac{t+s}{2},\frac{x+y}{2}\right)
	J_r\left(\frac{x-y}{2}\right)
	J_{r_0}(t-s)\,dxdtdyds\Bigg] \\
	&\quad \qquad +\E{\iiiint_{\Pi_n^2} \nabla_y \cdot 	
	Q(v(t,x),V-\psi(s,y))\test(t,x,s,y)\,dxdtdyds} 
	\\ &=: \Term_2^1 + \Term_2^2.
\end{align*}
It is straightforward to show that
\begin{equation*}
 \begin{split}
  \lim_{r,r_0 \downarrow 0} 
	\Term_2^1 &= \E{\iint_{\Pi_n} 
	Q(v(t,x),V-\psi(t,x))\cdot 
	\nabla\phi(t,x)\,dxdt}.
\end{split}
\end{equation*}
Finally, we apply the identity
 \begin{equation*}
  Q(v,V-\psi) = Q(v+\psi,V) + 
	\int_V^{v + \psi}S'(z-V)\left(f'(z-\psi)-f'(z)\right)\,dz.
 \end{equation*}
 Consider $\Term_2^2$. By the chain rule, 
 \begin{multline*}
  \Term_2^2 = -\E{\iiiint_{\Pi_n^2} 
	\partial_2Q(v(t,x),V-\psi(s,y))\cdot \nabla_y\psi(s,y)
	\,\test(t,x,s,y)\,dxdtdyds}. 
 \end{multline*}
 Sending $r_0, r$ to zero, we arrive at
 \begin{equation*}
  \lim_{r,r_0 \downarrow 0} 
	\Term_2^2 = -\E{\iint_{\Pi_n} 
	\partial_2Q(v(t,x),V-\psi(t,x))\cdot 
	\nabla_x\psi(t,x)\,\phi(t,x)\,dxdt}.
 \end{equation*}
 Finally, note that
 \begin{align*}
  \partial_2Q(v,V-\psi) &= -\int_{V-\psi}^v
	S''(z-V+\psi)f'(z)\,dz \\
	&= -\int_V^{v+\psi}S''(z-V)f'(z-\psi)\,dz.
 \end{align*}
 This concludes the proof.
\end{proof}

\begin{limit}\label{limit:I3}
 Let $\Term_3$ be defined in 
Lemma \ref{lemma:SumIsEntropySol} 
and $\test$ in \eqref{eq:DefOfTestFunc}. Then
 \begin{equation*}
  \lim_{r,r_0 \downarrow 0} \Term_3 = 
	\E{\iint_{\Pi_n}S''(v(t,x) 
	+ \psi(t,x)-V)D_tV \sigma(\psi(t,x) +w(x))
	\phi(t,x)\,dxdt}.
 \end{equation*}
\end{limit}
\begin{proof}
The proof is a straightforward application of 
the dominated convergence 
theorem and Lemma \ref{lemma:DoubLebCon}.
\end{proof}
\begin{limit}\label{limit:I4}
 Let $\Term_4$ be defined in 
Lemma~\ref{lemma:SumIsEntropySol} and 
$\test$ by \eqref{eq:DefOfTestFunc}. Then
\begin{equation*}
 \lim_{r,r_0 \downarrow 0} 
\Term_4 = \frac{1}{2}\E{\iint_{\Pi_n}S''(v(t,x) 
+ \psi(t,x)-V)\sigma^2(\psi(t,x) +w(x))\phi(t,x)\,dxdt}.
 \end{equation*}
\end{limit}
\begin{proof}
 This term may be treated similarly as $\Term_3$.
\end{proof}

\subsection{Weighted $L^p$ spaces}\label{sec:WeightedLp}
In the next two lemmas we 
collect a few elementary properties 
of (weight) functions in $\mathfrak{N}$. 
For proofs, see \cite{KarlsenStorroesten2014}.
\begin{lemma}\label{lemma:PhiProp}
Suppose $\phi \in \mathfrak{N}$ and $0 < p < \infty$. 
Then, for $x,z\in \R^d$,
$$
\abs{\phi^{1/p}(x+z)-\phi^{1/p}(x)} 
\leq w_{p,\phi}(\abs{z})\phi^{1/p}(x), 
$$
where 
$$
w_{p,\phi}(r) = \frac{C_\phi}{p}r\left(1 + \frac{C_\phi}{p}r e^{C_\phi r/p}\right),
$$
which is defined for all $r \geq 0$. As a consequence it follows that if 
$\phi(x_0) = 0$ for some $x_0 \in \R^d$, then $\phi \equiv 0$ (and by 
definition $\phi \notin \mathfrak{N}$).
\end{lemma}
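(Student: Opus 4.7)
The plan is to reduce the estimate to a Gronwall bound along the segment $t\mapsto x+tz$, $t\in[0,1]$. The key preliminary step is to show that $\phi>0$ everywhere on $\R^d$; once this is in hand, $\phi^{1/p}$ is smooth along every line, and the estimate follows by differentiating and invoking Gronwall. This positivity step also gives the stated consequence essentially for free.

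To prove $\phi>0$, note first that $\abs{\nabla\phi}\le C_\phi\phi$ forces $\phi\ge 0$ pointwise, since the left-hand side is nonnegative. Fix $x_0,z\in\R^d$ and put $h(t)=\phi(x_0+tz)$. Then $h\in C^1([0,1])$ and $h'(t)=\nabla\phi(x_0+tz)\cdot z$ satisfies $h'(t)\le\abs{h'(t)}\le C_\phi\abs{z}\,h(t)$. Gronwall's inequality gives $h(t)\le h(0)e^{C_\phi\abs{z}t}$; if $\phi(x_0)=0$ then $h\equiv 0$, and since $z\in\R^d$ is arbitrary, $\phi$ would vanish on every line through $x_0$, forcing $\phi\equiv 0$ and contradicting $\phi\in\mathfrak{N}$. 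Hence $\phi>0$ on $\R^d$, which is exactly the corollary stated at the end of the lemma.

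For the main inequality, the positivity of $\phi$ makes $g(t):=\phi^{1/p}(x+tz)$ a $C^1$ function on $[0,1]$ with
\begin{equation*}
  g'(t)=\tfrac{1}{p}\phi^{1/p-1}(x+tz)\,\nabla\phi(x+tz)\cdot z,\qquad
  \abs{g'(t)}\le a\,g(t),\qquad a:=\tfrac{C_\phi\abs{z}}{p}.
\end{equation*}
Gronwall then gives $g(s)\le g(0)e^{as}$ on $[0,1]$, so
\begin{equation*}
  \abs{g(1)-g(0)}\le\int_0^1\abs{g'(s)}\,ds\le a\,g(0)\int_0^1 e^{as}\,ds=(e^a-1)\,g(0).
\end{equation*}
The elementary bounds $e^a-1\le ae^a$ (from $\int_0^a e^s\,ds\le ae^a$) and $e^a(1-a)\le 1$ for all $a\ge 0$ combine to give $e^a-1\le a(1+ae^a)=w_{p,\phi}(\abs{z})$, completing the proof. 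The only real subtlety is securing positivity before differentiating $\phi^{1/p}$; the remainder is a routine Gronwall argument.
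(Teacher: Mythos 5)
The paper itself defers the proof of this lemma to \cite{KarlsenStorroesten2014}, so there is no in-paper proof to compare against; your argument, however, is correct and self-contained. The positivity step works: $\abs{\nabla\phi}\le C_\phi\phi$ forces $\phi\ge0$, the one-dimensional Gronwall bound on $h(t)=\phi(x_0+tz)$ gives $h\equiv0$ whenever $h(0)=0$, and since $z$ is arbitrary this propagates to $\phi\equiv0$, contradicting $\phi\in\mathfrak{N}$. With positivity secured, $g(s)=\phi^{1/p}(x+sz)$ is $C^1$, $\abs{g'}\le ag$ with $a=C_\phi\abs{z}/p$, Gronwall yields $\abs{g(1)-g(0)}\le g(0)(e^a-1)$, and your two elementary inequalities $e^a-1\le ae^a$ and $e^a(1-a)\le1$ do combine correctly to give $e^a-1\le a(1+ae^a)=w_{p,\phi}(\abs{z})$. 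As a minor stylistic alternative that produces the stated modulus with no auxiliary inequality-checking: from $g(u)\le g(0)e^{au}\le g(0)e^a$ on $[0,1]$ one gets
\begin{equation*}
g(s)\le g(0)+\int_0^s\abs{g'(u)}\,du\le g(0)+a\int_0^s g(u)\,du\le g(0)\left(1+ae^a\right),
\end{equation*}
and therefore $\abs{g(1)-g(0)}\le a\int_0^1 g(s)\,ds\le g(0)\,a\left(1+ae^a\right)=g(0)\,w_{p,\phi}(\abs{z})$, which reproduces the lemma's modulus term for term. Either route is sound.
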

\begin{lemma}\label{lemma:ContMollWeightedNorm}
Fix $\phi \in \mathfrak{N}$, and let $w_{p,\phi}$ be defined in 
Lemma \ref{lemma:PhiProp}. Let $J$ be a mollifier as 
defined in Section~\ref{seq:prelim} and 
take $\phi_\delta = \phi \star J_\delta$ for $\delta > 0$. Then
\begin{itemize}
	\item[(i)] $\phi_\delta \in \mathfrak{N}$ with $C_{\phi_\delta} = C_\phi$.
	\item[(ii)] For any $u \in L^p(\R^d,\phi)$,
	$$
	\abs{\norm{u}_{p,\phi}^p-\norm{u}_{p,\phi_\delta}^p} 
	\leq w_{1,\phi}(\delta)\min\seq{\norm{u}_{p,\phi}^p,\norm{u}_{p,\phi_\delta}^p}.
	$$
	\item[(iii)] 
	\begin{displaymath}
	 \abs{\Delta \phi_\delta(x)} \leq \frac{1}{\delta}
	 C_\phi\norm{\nabla J}_{L^1(\R^d)}(1 + w_{1,\phi}(\delta))^2\phi_\delta(x).
	\end{displaymath}
 \end{itemize}
\end{lemma}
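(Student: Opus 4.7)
All three parts rely on bringing the gradient bound $|\nabla\phi|\le C_\phi \phi$ and the pointwise estimate of Lemma~\ref{lemma:PhiProp} (in its $p=1$ form) inside the convolution $\phi_\delta=\phi\star J_\delta$. For (i), since $\phi\in C^1$ I differentiate under the integral, so $\nabla\phi_\delta=(\nabla\phi)\star J_\delta$, and pointwise
\begin{equation*}
|\nabla\phi_\delta(x)| \le \int |\nabla\phi(x-y)|\, J_\delta(y)\,dy \le C_\phi \int \phi(x-y)\, J_\delta(y)\,dy = C_\phi\phi_\delta(x),
\end{equation*}
which shows $\phi_\delta\in\mathfrak{N}$ with $C_{\phi_\delta}\le C_\phi$.

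For (ii), the plan is to write
\begin{align*}
\|u\|_{p,\phi}^p-\|u\|_{p,\phi_\delta}^p &= \int |u(x)|^p\bigl(\phi(x)-\phi_\delta(x)\bigr)\,dx, \\
\phi(x)-\phi_\delta(x) &= \int \bigl(\phi(x)-\phi(x-y)\bigr)J_\delta(y)\,dy,
\end{align*}
and then apply Lemma~\ref{lemma:PhiProp} in its two equivalent forms $|\phi(x)-\phi(x-y)|\le w_{1,\phi}(|y|)\phi(x)$ and $|\phi(x)-\phi(x-y)|\le w_{1,\phi}(|y|)\phi(x-y)$ (the second obtained by relabeling $x\mapsto x-y$, $y\mapsto -y$). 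Since $w_{1,\phi}$ is monotone and $\mathrm{supp}\,J_\delta\subset B(0,\delta)$, these two forms yield respectively $|\phi(x)-\phi_\delta(x)|\le w_{1,\phi}(\delta)\phi(x)$ and $|\phi(x)-\phi(x-y)|\le w_{1,\phi}(\delta)\phi_\delta(x)$; inserting back gives the two bounds $w_{1,\phi}(\delta)\|u\|_{p,\phi}^p$ and $w_{1,\phi}(\delta)\|u\|_{p,\phi_\delta}^p$, and taking the minimum gives the required estimate.

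The technical core is (iii): the Laplacian of $\phi_\delta$ must be controlled despite $\phi$ having only one derivative, so I would distribute the two derivatives, one onto $\phi$ and one onto $J_\delta$:
\begin{equation*}
\Delta\phi_\delta(x) = \sum_{i=1}^d (\partial_i\phi\star\partial_i J_\delta)(x) = \int \nabla\phi(x-y)\cdot\nabla J_\delta(y)\,dy.
\end{equation*}
Using $|\nabla\phi|\le C_\phi\phi$ and rescaling $y=\delta z$, this is bounded by $\frac{C_\phi}{\delta}\int \phi(x-\delta z)|\nabla J(z)|\,dz$. To convert $\phi(x-\delta z)$ into $\phi_\delta(x)$, I would invoke Lemma~\ref{lemma:PhiProp} twice: first to bound $\phi(x-\delta z)\le(1+w_{1,\phi}(\delta))\phi(x)$ for $|z|\le 1$, and then to derive $\phi(x)\le(1+w_{1,\phi}(\delta))\phi_\delta(x)$ by averaging the reverse inequality $\phi(x)\le(1+w_{1,\phi}(|y|))\phi(x-y)$ against $J_\delta(y)\,dy$. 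Combining these produces the factor $(1+w_{1,\phi}(\delta))^2$ and, together with $\|\nabla J\|_{L^1(\R^d)}$ from the remaining integral in $z$, completes (iii). The main obstacle is precisely in this last part, namely routing only one derivative onto $\phi$ so as to avoid a $\delta^{-2}$ blow-up while still expressing the final bound in terms of $\phi_\delta(x)$, which forces the double application of Lemma~\ref{lemma:PhiProp}.
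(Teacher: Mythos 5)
Your argument is correct, and it is the natural one; the paper itself does not prove this lemma but refers the reader to \cite{KarlsenStorroesten2014}, so there is no in-paper proof to compare against. A few small remarks. In (i) you establish $\abs{\nabla\phi_\delta}\le C_\phi\phi_\delta$, hence $C_{\phi_\delta}\le C_\phi$; that inequality is what matters in every application (it is what makes $w_{1,\phi}$ an admissible modulus for $\phi_\delta$), and the paper's ``$C_{\phi_\delta}=C_\phi$'' is best read as ``the constant $C_\phi$ works for $\phi_\delta$'' rather than as an exact equality of infima. In (ii) your reasoning is sound; note the display in your text contains a small typo where the second conclusion should read $\abs{\phi(x)-\phi_\delta(x)}\le w_{1,\phi}(\delta)\phi_\delta(x)$, but this is clearly what you derived and used. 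Your treatment of (iii) is the correct one: splitting $\Delta(\phi\star J_\delta)$ as $(\nabla\phi)\star(\nabla J_\delta)$ to spend only one derivative on $\phi$, scaling out the factor $\delta^{-1}$, and then passing from $\phi(x-\delta z)$ to $\phi_\delta(x)$ via two applications of Lemma~\ref{lemma:PhiProp} (one to compare $\phi(x-\delta z)$ with $\phi(x)$, and one to compare $\phi(x)$ with $\phi_\delta(x)$) is exactly what produces the factor $(1+w_{1,\phi}(\delta))^2$ in the stated bound.
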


\subsection{A ``doubling of variables'' tool}
The following result follows along the lines 
of \cite[Lemma~2.7.2]{Serre1999}. 
See also \cite[\S~6]{KarlsenStorroesten2014}.

\begin{lemma}\label{lemma:DoubLebCon}
Suppose $u,v \in L^1_{\mathrm{loc}}(\R^d)$ and $F$ is 
Lipschitz on $\R^2$. Fix $\psi \in C_c(\R^d)$ and set
\begin{align*}
	\mathcal{T}_r &:= \int_{\R^d}\int_{\R^d} F(u(x),v(y))
	\frac{1}{2^d}\psi\left(\frac{x+y}{2}\right)
	J_r\left(\frac{x-y}{2}\right)\,dydx \\
	& \qquad\qquad \qquad 
	-\int_{\R^d} F(u(x),v(x))\psi(x)\,dx,
\end{align*}
where $J_r$ is defined in \eqref{eq:MollifierDef}. 
Then $\mathcal{T}_r \rightarrow 0$ as $r \downarrow 0$.

Similarly, let $G:[0,T] \times \R \rightarrow \R$ 
be measurable in the first variable and 
Lipschitz continuous in the second 
variable. With $w \in L^1([0,T])$, set 
$$
\mathcal{T}_{r_0}(s) = \int_0^T 
\abs{G(s,w(t))-G(s,w(s))}J_{r_0}(t-s)\,dt.
$$
Then $\mathcal{T}_{r_0}(s) \rightarrow 0$ for 
a.e.~$s$ as $r_0 \downarrow 0$. 

The above results do not rely on the 
the symmetry of $J$.
\end{lemma}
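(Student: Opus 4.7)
For the first claim, the plan is to reduce $\mathcal{T}_r$ to a single integral against $J_r$ via the change of variables $(\tilde x,z) = \bigl(\tfrac{x+y}{2},\tfrac{x-y}{2}\bigr)$, which has $dxdy = 2^d\,d\tilde x\,dz$. This transforms the first term into $\iint F(u(\tilde x+z),v(\tilde x-z))\psi(\tilde x)J_r(z)\,d\tilde x\,dz$, and since $\int_{\R^d} J_r(z)\,dz = 1$, the subtracted term can be written as $\iint F(u(\tilde x),v(\tilde x))\psi(\tilde x)J_r(z)\,d\tilde x\,dz$. Subtracting,
\begin{equation*}
\mathcal{T}_r = \iint_{\R^d \times \R^d}\bigl[F(u(\tilde x+z),v(\tilde x-z))-F(u(\tilde x),v(\tilde x))\bigr]\psi(\tilde x)J_r(z)\,d\tilde x\,dz.
\end{equation*}

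Using the Lipschitz bound $|F(a,b)-F(c,d)| \le L(|a-c|+|b-d|)$ and supposing $r \le 1$, the support of $J_r$ is contained in $B(0,1)$, so the integrand in $\tilde x$ is supported in a fixed compact neighborhood $K$ of $\mathrm{supp}(\psi)$. Hence
\begin{equation*}
|\mathcal{T}_r| \le L\norm{\psi}_\infty \int_{B(0,r)} \bigl(\norm{u(\cdot+z)-u(\cdot)}_{L^1(K)} + \norm{v(\cdot-z)-v(\cdot)}_{L^1(K)}\bigr) J_r(z)\,dz.
\end{equation*}
By the standard $L^1$ continuity of translation (applied to the $L^1$ restrictions of $u$ and $v$ to a compact enlargement of $K$), for any $\varepsilon > 0$ there exists $r_1 > 0$ such that both translation differences are bounded by $\varepsilon$ whenever $|z| \le r_1$. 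Since $\int J_r = 1$, this yields $|\mathcal{T}_r| \le 2L\norm{\psi}_\infty \varepsilon$ for $r \le r_1$, and the conclusion follows. Note that no symmetry of $J$ was used.

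For the second claim, the plan is to use the Lipschitz continuity of $G(s,\cdot)$, with Lipschitz constant $L(s)$, to obtain
\begin{equation*}
\mathcal{T}_{r_0}(s) \le L(s)\int_0^T |w(t)-w(s)|\,J_{r_0}(t-s)\,dt \le L(s)\norm{J}_\infty\frac{1}{r_0}\int_{s-r_0}^{s+r_0}|w(t)-w(s)|\,dt,
\end{equation*}
where we extend $w$ by zero outside $[0,T]$ and use that $J_{r_0}$ is supported in $[-r_0,r_0]$ with $\norm{J_{r_0}}_\infty \le \norm{J}_\infty/r_0$. The right-hand side tends to $0$ at every Lebesgue point of $w$ by the Lebesgue differentiation theorem, hence for a.e.\ $s \in [0,T]$. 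Again the argument uses only the support and sup-norm of $J_{r_0}$, not its symmetry. The only point requiring care is checking that $L(s)$ can be taken finite for a.e.\ $s$; this follows since $G$ is measurable in $s$ and Lipschitz in the second variable, so $L(s)$ is a measurable a.e.-finite function and the conclusion holds for a.e.\ $s$ at which $L(s) < \infty$ and which is a Lebesgue point of $w$.
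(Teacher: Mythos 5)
The paper does not prove this lemma itself; it simply refers to \cite[Lemma~2.7.2]{Serre1999}. Your proof is the standard argument for such a ``commutation'' lemma --- the change of variables $(\tilde x, z) = (\tfrac{x+y}{2}, \tfrac{x-y}{2})$ (cancelling the $2^d$ Jacobian factor), reinsertion of $\int J_r = 1$ to combine the two terms into a single difference, the Lipschitz bound, restriction to a compact set via the support of $\psi$, and strong $L^1$ continuity of translation for the first part; and the bound $J_{r_0}(t-s) \le \norm{J}_\infty/r_0$ with support in $[-r_0,r_0]$ together with the Lebesgue differentiation theorem for the second --- and it is correct. Your remark that only the support and sup-norm bound on $J$ are used, not its symmetry, correctly accounts for the last clause of the lemma, and your care about the possibly $s$-dependent Lipschitz constant $L(s)$ of $G(s,\cdot)$ (measurable by restricting the defining supremum to rationals, intersected with the Lebesgue-point set of $w$) is if anything more thorough than what the paper requires.
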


\subsection{A version of It\^{o}'s formula} 
Here we recall the particular anticipating It\^{o} formula applied in 
the proof of Lemma~\ref{lemma:EntOpSplit} and Lemma~\ref{lemma:SumIsEntropySol}. 
The proof of this follows \cite[Theorem~3.2.2]{Nualart2006} closely. However, due to the 
particular assumptions, certain points simplifies. 
See \cite[Theorem~6.7]{KarlsenStorroesten2014} for an outline of a proof.
\begin{theorem}\label{theorem:AntIto}
Let $X$ be a continuous process of the form
$$
X(t) = X_0 + \int_0^t u(s)\,dB(s) 
+ \int_0^t v(s)\,ds,
$$
where $u:[0,T] \times \Omega \rightarrow \R$ and 
$v:[0,T] \times \Omega \rightarrow \R$ are predictable processes, satisfying 
\begin{equation*}
	\E{\bigg(\int_0^T u^2(s,z)\,ds\bigg)^2} < \infty, 
	\qquad \E{\int_0^T v^2(s)\,ds} < \infty,
\end{equation*}
and $X_0 \in L^2(\Omega,\fcal_0,P)$. 
Let $F:\R^2 \times [0,T] \rightarrow \R$ be twice continuously 
differentiable. Suppose there exists a constant $C > 0$ such that for all 
$(\zeta,\lambda,t) \in \R^2 \times [0,T]$,
\begin{align*}
	& \abs{F(\zeta,\lambda,t)},\abs{\partial_3F(\zeta,\lambda,t)} 
	\leq C(1 + \abs{\zeta} + \abs{\lambda}), \\
	& \abs{\partial_1F(\zeta,\lambda,t)}, 
	\abs{\partial_{1,2}^2F(\zeta,\lambda,t)}, 
	\abs{\partial_1^2F(\zeta,\lambda,t)} \leq C.
\end{align*}
Let $V \in \Sm$. Then $s \mapsto \partial_1F(X(s),V,s)u(s)$ is 
Skorohod integrable, and
\begin{align*}
	F(X(t),V,t) &= F(X_0,V,0) \\
	&\qquad +\int_0^t\partial_3F(X(s),V,s)\,ds \\
	&\qquad +\int_0^t\partial_1F(X(s),V,s)u(s,z)\,dB(s) \\	
	&\qquad +\int_0^t\partial_1F(X(s),V,s)v(s)\,ds \\
	&\qquad +\int_0^t\partial_{1,2}^2F(X(s),V,s)D_sVu(s)\,ds \\
	&\qquad +\frac{1}{2}\int_0^t \partial_1^2F(X(s),V,s)u^2(s)\,ds,
	\quad \text{$dP$-almost surely}.
\end{align*}
\end{theorem}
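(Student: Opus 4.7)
The plan is to follow Nualart's partition-based proof of the anticipating Itô formula (Theorem~3.2.2 in Nualart's book), exploiting the very strong assumption $V \in \mathcal{S}$, i.e.\ $V = g(W(h_1),\ldots,W(h_n))$ with $g \in C^\infty_c(\R^n)$. This makes $V$ bounded, $D_sV = \sum_i \partial_i g(W(h_1),\ldots,W(h_n))\,h_i(s)$ bounded uniformly in $s$, and $V \in \D^{k,p}$ for every $k,p$; combined with the imposed bounds on $\partial_1F$, $\partial_{1,2}^2F$, $\partial_1^2F$ and the $L^2$-integrability of $u,v$, the process $s\mapsto \partial_1 F(X(s),V,s)u(s)$ belongs to $\D^{1,2}(L^2([0,T]))$, so its Skorohod integral is well-defined and the ``moreover'' clause of the statement becomes a by-product of the argument.

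First I would fix a partition $0=t_0<t_1<\cdots<t_n=t$ with vanishing mesh and telescope
\begin{equation*}
F(X(t),V,t)-F(X_0,V,0)=\sum_i\bigl[F(X(t_{i+1}),V,t_{i+1})-F(X(t_i),V,t_i)\bigr].
\end{equation*}
A second-order Taylor expansion of $F$ in the first and third arguments (at fixed $\lambda=V$) around $(X(t_i),V,t_i)$, together with the splitting $\Delta_iX:=X(t_{i+1})-X(t_i)=\int_{t_i}^{t_{i+1}}u\,dB+\int_{t_i}^{t_{i+1}}v\,ds$, reduces the identity to identifying the limits of four Riemann-type sums. The $\partial_3F$-sum and the drift portion of the $\partial_1F$-sum converge to $\int_0^t\partial_3F\,ds$ and $\int_0^t\partial_1F\cdot v\,ds$ by dominated convergence; the quadratic sum $\tfrac12\sum\partial_1^2F(\Delta_iX)^2$ converges to $\tfrac12\int_0^t\partial_1^2F\cdot u^2\,ds$ by the standard Itô quadratic-variation argument (justified by the bound on $\partial_1^2F$ and the $L^4$-type assumption on $\int u\,dB$); and the Taylor remainder vanishes in $L^1$ by continuity of $X$ and the boundedness of the second derivatives of $F$.

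The core of the argument is the diffusion part of the $\partial_1F$-sum, $\sum_i\partial_1F(X(t_i),V,t_i)\int_{t_i}^{t_{i+1}}u(s)\,dB(s)$. Here I would invoke the duality between Malliavin derivative and Skorohod integral (Nualart, Prop.~1.3.3): for an $\fcal_{t_i}$-measurable bounded $\Phi_i$ and $V\in\Sm$,
\begin{equation*}
\Phi_iV\int_{t_i}^{t_{i+1}}u(s)\,dB(s)=\int_{t_i}^{t_{i+1}}\Phi_iVu(s)\,\delta B(s)+\int_{t_i}^{t_{i+1}}\Phi_iD_sV\cdot u(s)\,ds.
\end{equation*}
Choosing $\Phi_i$ to be the $\fcal_{t_i}$-measurable part arising after ``freezing'' $V$ and applying the chain rule $D_s[\partial_1F(X(t_i),V,t_i)]=\partial_{1,2}^2F(X(t_i),V,t_i)D_sV$ for $s>t_i$, the second term on the right converges to the correction $\int_0^t\partial_{1,2}^2F(X(s),V,s)D_sVu(s)\,ds$, while the first term converges to the Skorohod integral $\int_0^t\partial_1F(X(s),V,s)u(s)\,dB(s)$ by closedness of $\delta$ on $\D^{1,2}(L^2)$.

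The main obstacle is precisely this last convergence: showing that the simple Skorohod integrands $\sum_i\mathbf{1}_{[t_i,t_{i+1}]}(s)\partial_1F(X(t_i),V,t_i)u(s)$ converge to $\partial_1F(X(s),V,s)u(s)$ in the $\D^{1,2}(L^2([0,T]))$-norm, so that the closedness of the Skorohod integral operator can be applied. This requires bounding not only $L^2$-differences $\partial_1F(X(s),V,s)-\partial_1F(X(t_i),V,t_i)$ (handled via the bound on $\partial_1^2F,\partial_3\partial_1F$ and the continuity of $X$), but also the $L^2([0,T]\times\Omega)$ difference of their Malliavin derivatives, which expand via the chain rule into terms involving $\partial_{1,2}^2F\cdot D_sV$ and $\partial_1^2F\cdot D_sX$. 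The assumed uniform bounds on all these derivatives, together with the moment assumptions on $u,v$ and the boundedness of $V,D_sV$, are exactly what is needed to close the convergence estimates and conclude the formula.
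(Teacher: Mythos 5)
Your overall partition--Taylor strategy is indeed the one behind the paper's cited proof (Nualart, Theorem~3.2.2, with the outline in \cite[Theorem~6.7]{KarlsenStorroesten2014}), but the way you handle the stochastic sum has a genuine gap. Under the stated hypotheses $u,v$ are merely predictable with $L^2$/$L^4$-type moment bounds and $X_0\in L^2(\Omega,\fcal_0,P)$, so neither $X(s)$ nor $u(s)$ is Malliavin differentiable. Consequently $G_i:=\partial_1F(X(t_i),V,t_i)$ need not belong to $\D^{1,2}$, and Nualart's factor-out identity (Proposition~1.3.3) cannot be applied to it directly; the ``chain rule'' $D_sG_i=\partial_{1,2}^2F(X(t_i),V,t_i)D_sV$ for $s>t_i$ presupposes that $D_sX(t_i)$ exists and vanishes, which is exactly what is not available. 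Your displayed identity with $\Phi_iV$ has the same problem (a bounded $\fcal_{t_i}$-measurable $\Phi_i$ need not be in $\D^{1,2}$, and the dependence of $G_i$ on $V$ is nonlinear, not of product form). The repair — and this is the simplification the paper alludes to when it says ``certain points simplify'' — is to differentiate only $V$: either write $V=f(W(h_1),\dots,W(h_n))$, apply the classical It\^o formula to $F(X(t),f(\lambda),t)$ for deterministic $\lambda$ and then substitute $\lambda=(W(h_1),\dots,W(h_n))$ via the substitution formula relating the parametrized It\^o integral to the Skorohod integral, or approximate $X(t_i)$ in $L^2$ by $\fcal_{t_i}$-measurable elements of $\D^{1,2}$ whose derivative vanishes on $(t_i,T]$ and pass to the limit by closedness of $\delta$.

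The same issue invalidates your concluding step: you propose to show that the simple integrands converge to $\partial_1F(X(\cdot),V,\cdot)u(\cdot)$ in the $\D^{1,2}(L^2([0,T]))$-norm and you explicitly invoke estimates on $D_sX$ — but neither the approximants nor the limit integrand need lie in $\D^{1,2}(L^2([0,T]))$, so this mode of convergence is not even defined here. What the argument actually requires (and what delivers the asserted Skorohod integrability) is the closedness of $\delta$ as an operator from $L^2(\Omega\times[0,T])$ into $L^2(\Omega)$: the simple integrands converge in $L^2(\Omega\times[0,T])$ by dominated convergence (using $\abs{\partial_1F}\le C$, continuity of $X$ and of $\partial_1F$), while their Skorohod integrals converge in $L^2(\Omega)$ because every other term of the telescoped identity converges; closedness then identifies the limit and proves integrability of $s\mapsto\partial_1F(X(s),V,s)u(s)$. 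Two smaller inaccuracies: $D_sV=\sum_i\partial_if(W(h_1),\dots,W(h_n))h_i(s)$ is only square integrable in $s$ (the $h_i$ are in $L^2([0,T])$, not $L^\infty$), and no bound on $\partial_{1,3}^2F$ is assumed — continuity of $\partial_1F$ together with $\abs{\partial_1F}\le C$ must be used instead.
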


\subsection{Young measures}\label{seq:AppendixYoung}
The purpose of this section is to provide a 
reference for some results concerning 
Young measures and their use in representation 
formulas for weak limits. For a more general introduction, see for instance \cite{Florescu2012, Malek1996, Valadier1995}.

Let $(X,\mathscr{A},\mu)$ be a $\sigma$-finite 
measure space and $\mathscr{P}(\R)$ the 
set of probability measures on $\R$. 
In this paper, $X$ is typically $\Pi_T \times \Omega$. 
A \emph{Young measure} from $X$ into $\R$ 
is a function $\nu:X \rightarrow \mathscr{P}(\R)$ 
such that $x \mapsto \nu_x(B)$ is $\mathscr{A}$-measurable 
for every Borel measurable set 
$B \subset \R$. We denote by 
$\Young{X,\mathscr{A},\mu;\R}$, 
or $\Young{X;\R}$ if the measure 
space is understood, the set of all 
Young measures from $X$ into $\R$. 
The following theorem is proved 
in \cite[Theorem~6.2]{Pedregal1997} in the case 
that $X \subset \R^n$ and $\mu$ 
is the Lebesgue measure:
 
\begin{theorem}\label{theorem:YoungMeasureLimitOfComposedFunc}
Fix a $\sigma$-finite measure space 
$(X,\mathscr{A},\mu)$. 
Let $\zeta:[0,\infty) \rightarrow [0,\infty]$ be a 
continuous, non decreasing function 
satisfying $\lim_{\xi \rightarrow \infty}\zeta(\xi) 
= \infty$ and $\seq{u^n}_{n \ge 1}$ a sequence of 
measurable functions such that 
\begin{equation*}
  \sup_{n} \int_X \zeta(\abs{u^n})d\mu(x)  
	< \infty.
\end{equation*}
Then there exist a subsequence 
$\seq{u^{n_j}}_{j \ge 1}$ 
and $\nu \in \Young{X,\mathscr{A},\mu;\R}$ such 
that for any Carath\'eodory function 
$\psi:\R \times X \rightarrow \R$ with 
$\psi(u^{n_j}(\cdot),\cdot) 
\rightharpoonup \overline{\psi}$ 
in $L^1(X)$, we have 
\begin{equation*}
  \overline{\psi}(x) = 
	\int_{\R} \psi(\xi,x)\,d\nu_x(\xi).
 \end{equation*}
\end{theorem}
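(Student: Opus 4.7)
The plan is to reduce the general $\sigma$-finite case to the case of a finite measure space, and then to adapt Pedregal's construction, which relies only on weak-$*$ compactness in a suitable space of measure-valued functions rather than on the Lebesgue structure of $\mathbb{R}^n$. By $\sigma$-finiteness I would fix an exhausting sequence $X_1\subset X_2\subset\cdots$ with $X=\bigcup_k X_k$ and $\mu(X_k)<\infty$; equivalently, one could fix a strictly positive $\rho\in L^1(X,\mu)$ and work with the finite measure $d\tilde\mu=\rho\,d\mu$. All computations below will be done on each $X_k$, and glued by a diagonal argument.

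For fixed $k$, I would associate to each $u^n$ the probability measure $\tau_n^{(k)}$ on $X_k\times[-\infty,\infty]$ obtained as the pushforward of $\mu|_{X_k}/\mu(X_k)$ along the graph map $x\mapsto(x,u^n(x))$. Since $[-\infty,\infty]$ is compact and the $X_k$-marginal of every $\tau_n^{(k)}$ is fixed, the family $\{\tau_n^{(k)}\}$ is automatically tight, and Prokhorov's theorem extracts a subsequence converging narrowly to a probability measure $\tau^{(k)}$. Disintegrating $\tau^{(k)}$ against its $X_k$-marginal produces a Young measure $\nu^{(k)}\colon X_k\to\mathscr{P}([-\infty,\infty])$. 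The hypothesis $\sup_n\int\zeta(|u^n|)\,d\mu<\infty$, combined with $\zeta(\xi)\to\infty$, rules out escape of mass to $\pm\infty$: by a Chebyshev-type bound, $\tau^{(k)}(X_k\times\{|\xi|>L\})\le \zeta(L)^{-1}\sup_n\int\zeta(|u^n|)\,d\mu \to 0$ as $L\to\infty$, so $\nu^{(k)}_x\in\mathscr{P}(\mathbb{R})$ for $\mu$-a.e.\ $x\in X_k$.

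A standard diagonal extraction in $k$ yields a single subsequence $\{u^{n_j}\}$ along which all the $\nu^{(k)}$ exist and are mutually consistent on overlaps, and I would set $\nu_x:=\nu^{(k)}_x$ whenever $x\in X_k$. For the representation formula, given a Carath\'eodory function $\psi$ with $\psi(u^{n_j},\cdot)\rightharpoonup\overline\psi$ in $L^1(X)$, I would test this convergence against $\mathbf{1}_A$ for measurable $A\subset X_k$ with $\mu(A)<\infty$, obtaining $\int_A\psi(u^{n_j}(x),x)\,d\mu\to\int_A\overline\psi\,d\mu$. In parallel, I would approximate $\psi$ on $X_k$ by jointly continuous bounded functions via Scorza-Dragoni and truncation in $\xi$, and use the narrow convergence $\tau_{n_j}^{(k)}\to\tau^{(k)}$ together with the disintegration to obtain $\int_A\psi(u^{n_j},x)\,d\mu\to \int_A\int_{\mathbb{R}}\psi(\xi,x)\,d\nu_x(\xi)\,d\mu$. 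Matching these two limits and varying $A$ yields the pointwise identity $\overline\psi(x)=\int_{\mathbb{R}}\psi(\xi,x)\,d\nu_x(\xi)$ for $\mu$-a.e.\ $x$.

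The main obstacle I anticipate is precisely this last step: the narrow convergence of $\tau_n^{(k)}$ is a priori only designed to integrate bounded continuous functions, whereas a Carath\'eodory $\psi$ is merely measurable in $x$ and may be unbounded in $\xi$. Scorza-Dragoni is the right tool to replace measurability in $x$ by continuity on a set of arbitrarily large measure, the tightness supplied by $\zeta$ controls the tails in $\xi$, and the Dunford-Pettis equi-integrability of $\{\psi(u^{n_j},\cdot)\}$ (automatic from the assumed weak $L^1$-convergence) rules out concentration, so that the error introduced by both truncations can be made arbitrarily small uniformly in $j$. Closing these three approximations simultaneously is the delicate part of the argument.
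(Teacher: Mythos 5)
Your proposal takes a genuinely different route from the paper's. The paper's argument is functional-analytic: it embeds $\Young{X;\R}$ into $L^\infty_{w*}(X,\Rad{\R}) \cong (L^1(X,C_0(\R)))^*$ and extracts a weak-$*$ convergent subsequence from the bounded sequence $x\mapsto\delta_{u^n(x)}$ via Alaoglu's theorem. You instead push $\mu$ forward along graph maps to joint probability measures on $X_k\times[-\infty,\infty]$, extract a narrowly convergent subsequence via Prokhorov's theorem, disintegrate, and then pass from bounded continuous integrands to Carath\'eodory ones via Scorza--Dragoni and truncation. Both are standard routes to the fundamental theorem of Young measures, but they are not interchangeable in the present generality, and the difference is the whole point of the statement.

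The gap is that the theorem is stated for an \emph{abstract} $\sigma$-finite measure space $(X,\mathscr{A},\mu)$, and the paper explicitly needs this generality: in the application, $X=\Omega\times\Pi_T$ with $\Omega$ equipped only with the $\sigma$-algebra $\pcal$ and no specified topology. Your argument requires each $X_k$ to be a Polish space carrying a Radon measure. Without a topology on $X_k$ one cannot speak of tightness of the family $\tau_n^{(k)}$, of narrow convergence, of $C_b(X_k\times[-\infty,\infty])$, of Prokhorov's theorem, or of Scorza--Dragoni; and these are precisely the load-bearing tools in your compactness and representation steps. Replacing narrow convergence by the intrinsic weak-$*$ topology in which one tests against $L^1(X,C_0(\R))$-integrands would repair this, but then you have essentially reproduced the paper's functional-analytic argument. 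Your proof would be correct if $X$ were assumed to be standard Borel (and it is a clean alternative in that case), but as written it does not establish the theorem as stated, because it silently imports topological structure on $X$ that the hypotheses do not supply.
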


The proof is based on the embedding of $\Young{X;\R}$ 
into $L^\infty_{w*}(X,\Rad{\R})$. 
Here $\Rad{\R}$ denotes 
the space of Radon measures on $\R$. 
The crucial observation is that $(L^1(X,C_0(\R)))^*$ 
is isometrically isomorphic to 
$L^\infty_{w*}(X,\Rad{\R})$ also 
in the case that $(X,\mathscr{A},\mu)$ is an 
abstract $\sigma$-finite measure space. It is 
relatively straightforward to go through the proof 
and extend it to the more general case 
\cite[Theorem~2.11]{Malek1996}. The result then follows as an application of Alaoglu's theorem combined with the Eberlein-\v{S}mulian theorem. 
Note, however, due to our use of weighted $L^p$ 
spaces, it suffices with the 
version for finite measure spaces.

\subsection{Weak compactness in $L^1$.}
To apply Theorem \ref{theorem:YoungMeasureLimitOfComposedFunc} 
it is necessary to know if $\seq{\psi(\cdot,u^n(\cdot))}_{n \ge 1}$ 
has a subsequence converging weakly in $L^1(X)$. 
The key result is the well-known Dunford-Pettis Theorem. 

\begin{definition}\label{def:equiintegrability}
 Let $\mathcal{K} \subset L^1(X,\mathscr{A},\mu)$.
 \begin{itemize}
  \item[(i)] $\mathcal{K}$ is 
	\emph{uniformly integrable} if 
	for any $\varepsilon > 0$ there 
  exists $c_0(\varepsilon)$ such that 
  \begin{equation*}
   \sup_{f \in \mathcal{K}} \int_{\abs{f} \ge c} 
	\abs{f} \,d\mu \le \varepsilon \; 
	\mbox{ whenever $c \ge c_0(\varepsilon)$.}
  \end{equation*}
  \item[(ii)] $\mathcal{K}$ has 
	\emph{uniform tail} if for any 
	$\varepsilon > 0$ there 
  exists $E \in \mathscr{A}$ 
	with $\mu(E) < \infty$ such that 
  \begin{equation*}
   \sup_{f \in \mathcal{K}}\int_{X \setminus E}
 		\abs{f} \,d\mu \le \varepsilon.
  \end{equation*}
 \end{itemize}
 If $\mathcal{K}$ satisfies both (i) 
and (ii) it is said to be \emph{equiintegrable}.
\end{definition}

\begin{remark}
 Note that (ii) is void when $\mu$ is finite.
\end{remark}

\begin{theorem}[Dunford-Pettis]\label{theorem:DunfordPettis}
 Let $(X,\mathscr{A},\mu)$ be a 
$\sigma$-finite measure space. 
A subset $\mathcal{K}$ of $L^1(X)$ 
is relatively weakly sequentially compact 
if and only if it is equiintegrable.
\end{theorem}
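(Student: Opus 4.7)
The plan is to prove the two implications separately, with the sufficiency direction $(\Leftarrow)$ being the more delicate.

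For necessity $(\Rightarrow)$, I would suppose $\mathcal{K}$ is relatively weakly sequentially compact. First, by applying the uniform boundedness principle (each $g \in L^\infty = (L^1)^*$ acts as a bounded functional on the weakly precompact set $\mathcal{K}$), one gets that $\mathcal{K}$ is norm-bounded in $L^1$. For the equiintegrability, I would argue by contradiction: if uniform integrability fails, there exist $\varepsilon_0 > 0$, functions $f_n \in \mathcal{K}$ and measurable sets $A_n$ with $\mu(A_n) \to 0$ and $\int_{A_n}\abs{f_n}\,d\mu \geq \varepsilon_0$. Extract a weakly convergent subsequence $f_{n_k} \rightharpoonup f$ in $L^1$ and apply the Vitali--Hahn--Saks theorem to the signed measures $\nu_k(A) := \int_A f_{n_k}\,d\mu$: weak convergence (tested against $\mathbf{1}_A \in L^\infty$) gives setwise convergence, and Vitali--Hahn--Saks then yields uniform $\mu$-absolute continuity, contradicting the construction of the $A_n$. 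The uniform tail condition follows from the same setwise-convergence mechanism applied to a sequence $X \setminus E_n$ with $E_n$ of finite measure increasing to $X$.

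For sufficiency $(\Leftarrow)$, assume $\mathcal{K}$ is equiintegrable. As a preliminary step I would verify that $\mathcal{K}$ is bounded in $L^1$: split $\int \abs{f}\,d\mu$ into the parts over $\Set{\abs{f}\le c}$, $\Set{\abs{f}>c} \cap E$, and $X \setminus E$, bounding the first by $c\,\mu(E)$, the second via uniform integrability, and the third via the uniform tail (with $E$ the finite-measure set from the tail condition). Given a sequence $\seq{f_n} \subset \mathcal{K}$, I would embed $L^1 \hookrightarrow (L^\infty)^*$; by Alaoglu's theorem combined with the Eberlein--\v{S}mulian theorem, pass to a weak-$*$ convergent subsequence with limit $\Lambda \in (L^\infty)^*$. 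By the Yosida--Hewitt decomposition, $\Lambda = \Lambda_a + \Lambda_s$, where $\Lambda_a$ is representable by some $f \in L^1$ and $\Lambda_s$ is purely finitely additive.

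The main obstacle is ruling out the singular part $\Lambda_s$: this is exactly where equiintegrability enters decisively. I would use the fact that any purely finitely additive $\Lambda_s$ is concentrated (in a suitable sense) on sets of arbitrarily small $\mu$-measure, combined with uniform integrability to conclude that $\Lambda_s(g) = 0$ for all simple $g$ and hence for all $g \in L^\infty$. Once $\Lambda = \Lambda_a$ is identified with an element of $L^1$, a density argument (simple functions are dense in $L^\infty$ and separate points of $L^1$) upgrades weak-$*$ convergence of $\seq{f_n}$ in $(L^\infty)^*$ to weak convergence in $L^1$, completing the proof. To handle the $\sigma$-finite (rather than finite) case, I would first localize to finite-measure sets via the uniform tail and stitch together using a diagonal extraction.
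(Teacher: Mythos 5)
The paper does not prove this statement: it is cited as the classical Dunford--Pettis theorem, stated in the appendix without argument. So there is no paper proof to compare against, and your proposal should be judged on its own merits.

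Your necessity direction is sound. Boundedness of a relatively weakly sequentially compact set via the uniform boundedness principle, and equiintegrability via Vitali--Hahn--Saks applied to the set functions $\nu_k(A)=\int_A f_{n_k}\,d\mu$ (after reformulating uniform integrability in the small-measure form, i.e.\ Lemma~\ref{lemma:UniformIntCriteria}~(i)), is one of the standard routes. Two small points to tighten: the sets $A_n$ obtained from the failure of equiintegrability should be split into the pieces where $f_n$ is positive and negative before invoking the conclusion of Vitali--Hahn--Saks, since that theorem controls $\nu_k(A)=\int_A f_{n_k}\,d\mu$ rather than $\int_A\abs{f_{n_k}}\,d\mu$; and the extraction of a weakly convergent subsequence is legitimate here precisely because you are in the necessity direction.

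The sufficiency direction, however, has a genuine gap at the extraction step. You write that ``by Alaoglu's theorem combined with the Eberlein--\v{S}mulian theorem, pass to a weak-$*$ convergent subsequence with limit $\Lambda\in(L^\infty)^*$.'' Alaoglu gives weak-$*$ compactness of bounded sets in $(L^\infty)^*$, but not sequential compactness, since $L^\infty$ need not be separable; and Eberlein--\v{S}mulian concerns the weak topology $\sigma(Y,Y^*)$ of a Banach space $Y$, not the weak-$*$ topology $\sigma(Y^*,Y)$. Applied to $Y=(L^\infty)^*$ it would require knowledge about $(L^\infty)^{**}$, which is not what you want. So the step that hands you a weak-$*$ convergent \emph{subsequence} is unjustified as stated. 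The Yosida--Hewitt analysis that follows (killing the purely finitely additive part using equiintegrability) is the right idea, but it needs to be decoupled from the sequential claim. A clean fix that preserves your Yosida--Hewitt strategy: (a) use Alaoglu to conclude that the weak-$*$ closure $\overline{\mathcal{K}}^{w^*}$ of $\mathcal{K}$ in $(L^\infty)^*$ is weak-$*$ compact; (b) apply Yosida--Hewitt to an arbitrary $\Lambda\in\overline{\mathcal{K}}^{w^*}$ and use equiintegrability to show $\Lambda_s=0$, hence $\overline{\mathcal{K}}^{w^*}\subset L^1$; (c) observe that the weak-$*$ topology $\sigma((L^\infty)^*,L^\infty)$ restricted to $L^1$ coincides with the weak topology $\sigma(L^1,L^\infty)$ of $L^1$, so $\mathcal{K}$ is relatively weakly \emph{compact} in $L^1$; (d) \emph{then} invoke Eberlein--\v{S}mulian in $L^1$ to pass from relative weak compactness to relative weak sequential compactness. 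Alternatively, one can avoid duality of $L^\infty$ altogether via a diagonal argument over a countably generated sub-$\sigma$-algebra, constructing the limit by Radon--Nikodym; but as written your plan needs the correction above.
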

 
There are a couple of well known reformulations 
of uniform integrability.
\begin{lemma}\label{lemma:UniformIntCriteria}
 Suppose $\mathcal{K} \subset L^1(X)$ is bounded. 
Then $\mathcal{K}$ is uniformly integrable if and only if:
 \begin{itemize}
  \item[(i)] For any $\varepsilon > 0$ there 
	exists $\delta(\varepsilon) > 0$ such that
 \begin{equation*}
  \sup_{f \in \mathcal{K}}\int_E \abs{f} \,d\mu 
	\le \varepsilon \; \mbox{ whenever 
	$\mu(E) \le \delta(\varepsilon)$.}
 \end{equation*}
  \item[(ii)] There is an increasing 
	function $\Psi:[0,\infty) \rightarrow [0,\infty)$ 
  such that $\Psi(\zeta)/\zeta \rightarrow \infty$ 
	as $\zeta \rightarrow \infty$ and
  \begin{equation*}
   \sup_{f \in \mathcal{K}}\int_X 
	\Psi(\abs{f(x)}) \,d\mu(x) < \infty.
  \end{equation*}
 \end{itemize}
\end{lemma}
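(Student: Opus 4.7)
The plan is to prove the two equivalences separately, namely that a bounded $\mathcal{K}\subset L^1(X)$ is uniformly integrable (in the sense of Definition~\ref{def:equiintegrability}(i)) if and only if (i) holds, and if and only if (ii) holds. Throughout, set $M:=\sup_{f\in\mathcal{K}}\norm{f}_{L^1(X)}<\infty$.

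First I would address the equivalence with (i). For the forward implication, given $\varepsilon>0$, use uniform integrability to choose $c_0 = c_0(\varepsilon/2)$ such that $\sup_{f\in\mathcal{K}}\int_{\abs{f}\ge c_0}\abs{f}\,d\mu\le\varepsilon/2$. Then for any measurable $E$, split
\begin{equation*}
\int_E\abs{f}\,d\mu=\int_{E\cap\{\abs{f}\ge c_0\}}\abs{f}\,d\mu+\int_{E\cap\{\abs{f}<c_0\}}\abs{f}\,d\mu\le\tfrac{\varepsilon}{2}+c_0\,\mu(E),
\end{equation*}
and take $\delta(\varepsilon)=\varepsilon/(2c_0)$. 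For the converse, Chebyshev's inequality combined with the $L^1$-boundedness gives $\mu(\{\abs{f}\ge c\})\le M/c$ uniformly in $f\in\mathcal{K}$; for $\varepsilon>0$ with associated $\delta(\varepsilon)$ from (i), choosing $c_0(\varepsilon)=M/\delta(\varepsilon)$ and applying (i) with $E=\{\abs{f}\ge c\}$ for $c\ge c_0(\varepsilon)$ yields uniform integrability.

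Next I would address the equivalence with (ii). The implication (ii)$\Rightarrow$UI is direct: since $\Psi(\zeta)/\zeta\to\infty$, given $\varepsilon>0$ we may choose $c_0$ so large that $\Psi(\zeta)/\zeta\ge K/\varepsilon$ for all $\zeta\ge c_0$, with $K:=\sup_{f\in\mathcal{K}}\int_X\Psi(\abs{f})\,d\mu$. For $c\ge c_0$ and any $f\in\mathcal{K}$,
\begin{equation*}
\int_{\abs{f}\ge c}\abs{f}\,d\mu\le\frac{\varepsilon}{K}\int_{\abs{f}\ge c}\Psi(\abs{f})\,d\mu\le\varepsilon.
\end{equation*}
The nontrivial implication UI$\Rightarrow$(ii) is the de la Vallée--Poussin construction, which will be the main obstacle. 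The plan is: using UI, select an increasing sequence $c_n\uparrow\infty$ with $c_0=0$ and $\sup_{f\in\mathcal{K}}\int_{\abs{f}\ge c_n}\abs{f}\,d\mu\le 2^{-n}$ for $n\ge 1$. Define
\begin{equation*}
\Psi(\zeta):=\sum_{n\ge 1}(\zeta-c_n)_+,\qquad \zeta\ge 0,
\end{equation*}
which is continuous, nondecreasing, convex, and satisfies $\Psi(\zeta)/\zeta=\sum_{n\ge 1}(1-c_n/\zeta)_+\to\infty$ as $\zeta\to\infty$ (since for every $N$, all but finitely many terms are eventually $\ge 1/2$). The key estimate is that, by Tonelli/Fubini and the layer-cake-style bound $(\zeta-c_n)_+\le\zeta\,\mathbf{1}_{\{\zeta\ge c_n\}}$,
\begin{equation*}
\int_X\Psi(\abs{f})\,d\mu=\sum_{n\ge 1}\int_X(\abs{f}-c_n)_+\,d\mu\le\sum_{n\ge 1}\int_{\abs{f}\ge c_n}\abs{f}\,d\mu\le\sum_{n\ge 1}2^{-n}=1,
\end{equation*}
uniformly in $f\in\mathcal{K}$, giving (ii). The delicate point is ensuring $\Psi(\zeta)/\zeta\to\infty$; this is why the tail bounds must decay summably (hence $2^{-n}$) and the $c_n$ must be chosen strictly increasing. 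With both equivalences established, the lemma follows.
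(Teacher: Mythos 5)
Your proof is correct: both equivalences (uniform integrability with the uniform absolute continuity condition (i), and with the de la Vall\'ee--Poussin condition (ii)) are established by the standard arguments, and the construction $\Psi(\zeta)=\sum_{n\ge 1}(\zeta-c_n)_+$ with summable tail bounds is exactly the classical proof. The paper itself states this lemma without proof, as a well-known reformulation of uniform integrability, so there is no authorial argument to compare against; your write-up supplies the expected details (only trivial polish is needed, e.g.\ handling the degenerate case $\sup_{f\in\mathcal{K}}\int_X\Psi(\abs{f})\,d\mu=0$ when dividing by it).
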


\def\ocirc#1{\ifmmode\setbox0=\hbox{$#1$}\dimen0=\ht0 \advance\dimen0
  by1pt\rlap{\hbox to\wd0{\hss\raise\dimen0
  \hbox{\hskip.2em$\scriptscriptstyle\circ$}\hss}}#1\else {\accent"17 #1}\fi}


\begin{thebibliography}{10}

\bibitem{Bauzet:2015aa}
C.~Bauzet.
\newblock Time-splitting approximation of the cauchy problem for a stochastic
  conservation law.
\newblock {\em Mathematics and Computers in Simulation}, 118:73--86, 2015.

\bibitem{Bauzet:2014aa}
C.~Bauzet, J.~Charrier, and T.~Gallou\"et.
\newblock {Convergence of flux-splitting finite volume schemes for hyperbolic
  scalar conservation laws with a multiplicative stochastic perturbation}.
\newblock Preprint, 2014.

\bibitem{Bauzet:2012kx}
C.~Bauzet, G.~Vallet, and P.~Wittbold.
\newblock The {C}auchy problem for conservation laws with a multiplicative
  stochastic perturbation.
\newblock {\em J. Hyperbolic Differ. Equ.}, 9(4):661--709, 2012.

\bibitem{Biswas:2014gd}
I.~H. Biswas, K.~H. Karlsen, and A.~K. Majee.
\newblock {C}onservation laws driven by {L}\'{e}vy white noise.
\newblock {\em J. Hyperbolic Differ. Equ.}, 12(3):581--654, 2015.

\bibitem{CancesClementGallouet2011}
C.~Canc{\`e}s and T.~Gallou{\"e}t.
\newblock On the time continuity of entropy solutions.
\newblock {\em J. Evol. Equ.}, 11(1):43--55, 2011.

\bibitem{Chen:2011fk}
G.-Q. Chen, Q.~Ding, and K.~H. Karlsen.
\newblock On nonlinear stochastic balance laws.
\newblock {\em Arch. Ration. Mech. Anal.}, 204(3):707--743, 2012.

\bibitem{ChungWilliams2014}
K.~L. Chung and R.~J. Williams.
\newblock {\em Introduction to stochastic integration}.
\newblock Modern Birkh\"auser Classics. Birkh\"auser/Springer, New York, second
  edition, 2014.
  
\bibitem{DebusscheVovelle2010}
A.~Debussche and J.~Vovelle.
\newblock Scalar conservation laws with stochastic forcing.
\newblock {\em J. Funct. Anal.}, 259(4):1014--1042, 2010.

\bibitem{Debussche:2015aa}
A.~Debussche and J.~Vovelle.
\newblock Invariant measure of scalar first-order conservation laws with
  stochastic forcing.
\newblock {\em Probab. Theory Related Fields}, 163(3-4):575--611, 2015.

\bibitem{Debussche:2016aa}
A.~Debussche, M.~Hofmanov{\'a}, and J.~Vovelle.
\newblock Degenerate parabolic stochastic partial differential equations:
  {Q}uasilinear case.
\newblock {\em Ann. Probab.}, 44(3):1916--1955, 2016.

\bibitem{Eymard:2000fr}
R.~Eymard, T.~Gallou{\"e}t, and R.~Herbin.
\newblock Finite volume methods.
\newblock In {\em Handbook of numerical analysis, Vol. VII}, Handb. Numer.
  Anal., VII, pages 713--1020. North-Holland, Amsterdam, 2000.

\bibitem{FengNualart2008}
J.~Feng and D.~Nualart.
\newblock Stochastic scalar conservation laws.
\newblock {\em J. Funct. Anal.}, 255(2):313--373, 2008.

\bibitem{Florescu2012}
L.C.~Florescu and C.~Godet-Thobie.
\newblock {\em Young measures and compactness in measure spaces}.
\newblock De Gruyter, Berlin, 2012.

\bibitem{Gess:2014aa}
B.~{Gess} and P.~E. {Souganidis}.
\newblock {Long-time behavior, invariant measures and regularizing effects for
  stochastic scalar conservation laws}.
\newblock {\em ArXiv e-prints}, Nov. 2014.

\bibitem{Gess:2015aa}
B.~Gess and P.~E. Souganidis.
\newblock Scalar conservation laws with multiple rough fluxes.
\newblock {\em Commun. Math. Sci.}, 13(6):1569--1597, 2015.

\bibitem{Hofmanova:2013aa}
M.~Hofmanov{\'a}.
\newblock Degenerate parabolic stochastic partial differential equations.
\newblock {\em Stochastic Process. Appl.}, 123(12):4294--4336, 2013.

\bibitem{HR1997}
H.~ Holden and N.~H.~ Risebro.
\newblock Conservation laws with random source.
\newblock{\em Appl. Math. Optim}, {\bf 36}(2)\, (1997)\, 229-241.

\bibitem{Holden:2010fk}
H.~Holden, K.~H. Karlsen, K.-A. Lie, and N.~H. Risebro.
\newblock {\em Splitting Methods for Partial Differential Equations with Rough
  Solutions: Analysis and MATLAB programs}.
\newblock EMS Series of Lectures in Mathematics. European Mathematical Society
  (EMS), Z\"urich, 2010.

\bibitem{KarlsenStorroesten2014}
K.~H. Karlsen and E.~B. Storr\o{}sten.
\newblock On stochastic conservation laws and Malliavin calculus.
\newblock {\em Preprint}, {\tt arXiv:1507.05518 [math.AP]}.

\bibitem{Kim2003}
J.~U.~ Kim.
\newblock On a stochastic scalar conservation law.
\newblock{\em Indiana Univ. Math. J.}  {\bf 52} (1) (2003) 227-256. 

\bibitem{Kroker:2012fk}
I.~Kr{\"o}ker and C.~Rohde.
\newblock Finite volume schemes for hyperbolic balance laws with multiplicative
  noise.
\newblock {\em Appl. Numer. Math.}, 62(4):441--456, 2012.

\bibitem{Kruzkov:1969th}
S.~N. Kru{\v{z}}kov.
\newblock Results on the nature of the continuity of solutions of parabolic
  equations, and certain applications thereof.
\newblock {\em Mat. Zametki}, 6:97--108, 1969.

\bibitem{Kruzkov:1970kx}
S.~N. Kru{\v{z}}kov.
\newblock First order quasilinear equations with several independent variables.
\newblock {\em Mat. Sb. (N.S.)}, 81 (123):228--255, 1970.

\bibitem{Langseth:1996jw}
J.~O. Langseth, A.~Tveito, and R.~Winther.
\newblock On the convergence of operator splitting applied to conservation laws
  with source terms.
\newblock {\em SIAM Journal on Numerical Analysis}, 33(3):pp. 843--863, 1996.

\bibitem{Lions:2013aa}
P.-L. Lions, B.~Perthame, and P.~Souganidis.
\newblock Scalar conservation laws with rough (stochastic) fluxes.
\newblock {\em Stoch. Partial Differ. Equ. Anal. Comput.}, 1(4):664--686, 2013.

\bibitem{Lions:2014aa}
P.-L. Lions, B.~Perthame, and P.~E. Souganidis.
\newblock Scalar conservation laws with rough (stochastic) fluxes: the
  spatially dependent case.
\newblock {\em Stoch. Partial Differ. Equ. Anal. Comput.}, 2(4):517--538, 2014.

\bibitem{Malek1996}
J.~M{\'a}lek, J.~Ne{\v{c}}as, M.~Rokyta, and M.~R{\ocirc{u}}{\v{z}}i{\v{c}}ka.
\newblock {\em Weak and measure-valued solutions to evolutionary {PDE}s},
 volume~13 of {\em Applied Mathematics and Mathematical Computation}.
\newblock Chapman \& Hall, London, 1996.

\bibitem{MishraSchwab2012}
S.~Mishra and Ch. Schwab.
\newblock Sparse tensor multi-level {M}onte {C}arlo finite volume methods for
  hyperbolic conservation laws with random initial data.
\newblock {\em Math. Comp.}, 81(280):1979--2018, 2012.

\bibitem{Nualart2006}
D.~Nualart.
\newblock {\em The {M}alliavin calculus and related topics}.
\newblock Probability and its Applications (New York). Springer-Verlag, Berlin,
  second edition, 2006.
  
\bibitem{Panov:1996aa}
E.~Y. Panov.
\newblock On measure-valued solutions of the {C}auchy problem for a first-order
  quasilinear equation.
\newblock {\em Izv. Ross. Akad. Nauk Ser. Mat.}, 60(2):107--148, 1996.

\bibitem{Pedregal1997}
P.~Pedregal.
\newblock {\em Parametrized measures and variational principles}.
\newblock Progress in Nonlinear Differential Equations and their Applications,
  30. Birkh\"auser Verlag, Basel, 1997.

\bibitem{Serre1999}
D.~Serre.
\newblock {\em Systems of conservation laws. 1}.
\newblock Cambridge University Press, Cambridge, 1999.
\newblock Hyperbolicity, entropies, shock waves, Translated from the 1996
  French original by I. N. Sneddon.

\bibitem{Szepessy:1989vn}
A.~Szepessy.
\newblock An existence result for scalar conservation laws using measure valued
  solutions.
\newblock {\em Comm. Partial Differential Equations}, 14(10):1329--1350, 1989.
  
\bibitem{Valadier1995}
M.~Valadier.
\newblock A course on {Y}oung measures.
\newblock {\em Rend. Istit. Mat. Univ. Trieste}, 26(suppl.):349--394 (1995),
  1994.
\newblock Workshop on Measure Theory and Real Analysis (Italian) (Grado, 1993).

\bibitem{Vallet:2000ys}
G.~Vallet.
\newblock Dirichlet problem for a nonlinear conservation law.
\newblock {\em Rev. Mat. Complut.}, 13(1):231--250, 2000.

\bibitem{Vallet:2009uq}
G.~Vallet and P.~Wittbold.
\newblock On a stochastic first-order hyperbolic equation in a bounded domain.
\newblock {\em Infin. Dimens. Anal. Quantum Probab. Relat. Top.},
  12(4):613--651, 2009.

\end{thebibliography}
\end{document}